\title[Visco-energetic solutions for brittle fracture]
{Visco-energetic solutions for a model of crack growth \\ in brittle  materials}
\author[Gianni Dal Maso]{Gianni Dal Maso}
\address[Gianni Dal Maso]{SISSA, Via  Bonomea 265, 34136  Trieste,
Italy}
\email[Gianni Dal Maso]{dalmaso@sissa.it}
\author[Riccarda Rossi]{Riccarda Rossi}
\address[Riccarda Rossi]{DIMI, Universit\`a degli studi di  Brescia, Via Branze 38, 25133, Brescia, Italy}
\email[Riccarda Rossi]{riccarda.rossi@unibs.it}
\author[Giuseppe Savar\'e]{Giuseppe Savar\'e}
\address[Giuseppe Savar\'e]{Department of Decision Sciences, Universit\`a Bocconi, Via Roentgen 1, 20136, Milano, Italy}
\email[Giuseppe Savar\'e]{giuseppe.savare@unibocconi.it}
\author[Rodica Toader]{Rodica Toader}
\address[Rodica Toader]{DMIF, Universit\`a degli studi di  Udine, Via delle Scienze 206, 33100, Udine, Italy}
\email[Rodica Toader]{rodica.toader@uniud.it}
\newcommand{\R}{\mathbb{R}}
\newcommand{\N}{\mathbb{N}}
\newcommand{\Om}{\Omega}
\mathchardef\emptyset="001F
\newtheorem{theorem}{Theorem}[section]
\newtheorem{lemma}[theorem]{Lemma}
\newtheorem{remark}[theorem]{Remark}
\newtheorem{definition}[theorem]{Definition}
\newtheorem{proposition}[theorem]{Proposition}
\newtheorem{corollary}[theorem]{Corollary}
\newtheorem{hypothesis}[theorem]{Hypothesis}
\numberwithin{equation}{section}
\theoremstyle{plain}
\theoremstyle{definition}
\theoremstyle{remark}
\mathchardef\emptyset="001F
\numberwithin{equation}{section}
\newcommand{\down}{\downarrow}
 \def\calE{{\mathcal E}} \def\calF{{\mathcal F}}
 \def\calH{{\mathcal H}} 
  \def\calL{{\mathcal L}}
\def\calV{{\mathcal V}}  
  \def\rmC{{\mathrm C}}
\def\dd{\;\!\mathrm{d}} 
\newcommand{\eps}{\varepsilon}
\newcommand{\teta}{\vartheta}
\newcommand{\foraa}{\text{for a.a.\ }}
\newcommand{\Xs}{\Xamb}
\newcommand{\mdn}{\mathsf{d}}
\newcommand{\md}[2]{\mathsf{d}(#1,#2)}
\newcommand{\corr}[2]{\delta(#1,#2)}
\newcommand{\cmd}[2]{\mathsf{D}(#1,#2)}
\newcommand{\cmdn}{\mathsf{D}}
\newcommand{\ene}[2]{\mathcal{E}(#1,#2)}
\newcommand{\corrn}{\delta}
\newcommand{\Vars}[3]{\mathrm{Var}_{#1}(#2,#3)}
\newcommand{\Vari}[4]{\mathrm{Var}_{#1}(#2,[#3,#4])}
\newcommand{\Varname}[1]{\mathrm{Var}_{#1}}
\newcommand{\lli}[2]{{#1}({#2}{-})}
\newcommand{\rli}[2]{{#1}({#2}{+})}
\newcommand{\jump}[1]{\mathrm{J}_{#1}}
\newcommand{\Jvar}[4]{\mathrm{Jmp}_{#1}(#2;[#3,#4])}
\newcommand{\Jvarname}[1]{\mathrm{Jmp}_{#1}}
\newcommand{\vecostname}{\mathsf{c}}
\newcommand{\vecost}[3]{\mathsf{c}(#1,#2,#3)}
\newcommand{\oldvecost}[3]{\widehat{\mathsf{c}}(#1,#2,#3)}
\newcommand{\VE}{\mathrm{VE}}
\newcommand{\stab}[1]{\mathscr{S}_{#1}}
\newcommand{\rstab}[2]{\mathscr{R}(#1,#2)}
\newcommand{\rstabname}{\mathscr{R}}
\newcommand{\rstabt}[2]{\mathscr{R}(#1,#2)}
\newcommand{\hole}[1]{\mathscr{H}(#1)}
\newcommand{\tcost}[4]{\mathrm{Trc}_{#1}(#2,#3,#4)}
\newcommand{\tcostname}[1]{\mathrm{Trc}_{#1}}
\newcommand{\oldtcost}[4]{\widehat{\mathrm{Trc}}_{#1}(#2,#3,#4)}
\newcommand{\oldtcostname}[1]{\widehat{\mathrm{Trc}}_{#1}}
\newcommand{\Gap}[3]{\mathrm{GapVar}_{#1}(#2,#3)}
\newcommand{\piecewiseConstant}[2]{{#1}_{\kern-1pt#2}}
\newcommand{\pwc}{\piecewiseConstant}
\newcommand{\upiecewiseConstant}[2]{\underline{#1}_{\kern-1pt#2}}
\newcommand{\Argmin}{\mathrm{Argmin}}
\definecolor{dmagenta}{rgb}{0.8,0,0.8}
\newcommand{\BLUE}{\color{black}}
\newcommand{\EEE}{\color{black}}
\definecolor{ddcyan}{rgb}{0,0.6,0.9}
\definecolor{dcyan}{rgb}{0,0.4,0.9}
\definecolor{ddmagenta}{rgb}{0.8,0,0.8}
\definecolor{dred}{rgb}{.8,0,0}
\definecolor{vgreen}{rgb}{0.1,0.5,0.2}
\newcommand{\wsigma}{\hacca}
\newcommand{\hacca}{\stackrel{\mathsf{h}}{\to}}
\newcommand{\htop}{\mathsf{h}}
\newcommand{\argmin}{\mathrm{Argmin}}
\newcommand{\huno}{{\mathcal H}^{1}}
\newcommand{\K}{{\mathcal K}(\overline\Om)}
\newcommand{\Kf}{\GGG {\mathcal K}_{\rm fin}(\overline\Om)\nc}
\newcommand{\hd}{\mathsf{h}}
\newcommand{\Kmf}{{\mathcal K}_m(\overline\Om)}
\newcommand{\Kuno}{{\mathcal K}_1(\overline\Om)}
\newcommand{\Khf}{{\mathcal K}_h(\overline\Om)}
\newcommand{\tKmf}{\Kmf} 
\newcommand{\Kpf}{{\mathcal K}_p(\overline\Om)}
\newcommand{\tKpf}{\Kpf}
\newcommand{\tKf}{\mathcal{K}_1(\overline\Om)}
\newcommand{\trial}{J}
\newcommand{\Adm}[2]{\calV(#1,#2)}
\newcommand{\wAdm}[3]{\widetilde{\calV}(#3; #1,#2)}
\newcommand{\Admle}[2]{\calV_{\mathrm{LE}}(#1,#2)}
\newcommand{\dist}[2]{\mathrm{dist}(#1,#2)}
\newcommand{\Xamb}{\Kf}
\newcommand{\RIC}{\color{black}}
\newcommand{\CRIS}{(\Xamb,\calE,\mathsf{h},\mathsf{d},\delta)}
\newcommand{\CRISEL}{(\Xamb,\calE_{\mathrm{LE}},\mathsf{h},\mathsf{d},\delta)}
\newcommand{\Ctran}{\mathrm{C}_{\htop,\mdn}}
\newcommand{\icostname}{\alpha}
\newcommand{\icost}[2]{\icostname(#1,#2)}
\newcommand{\incostname}{\mathsf{e}}
\newcommand{\incost}[3]{\incostname(#1,#2, #3)}
\newcommand{\ATW}[2]{\Delta(#1,#2)}
\newcommand{\ATWname}{\Delta}
\newcommand{\Vfz}[1]{\mathcal{V}_{#1}}
\newcommand{\llim}[2]{{#1}(#2-)}
\newcommand{\rlim}[2]{{#1}(#2+)}
\newcommand{\serifsigma}{{\sigma}}
\newcommand{\sfK}{K}
\newcommand{\serifgamma}{{\gamma}}
\newcommand{\serifGamma}{{\Gamma}}
\newcommand{\serifu}{u}
\newcommand{\sfu}{u}
\newcommand{\sfg}{g}
\newcommand{\uopen}{V}
\newcommand{\uopenp}{V'}
\newcommand{\REVIS}{\color{black}}
\newcommand{\REVISAGAIN}{\color{black}}
\newcommand{\REVISDOUBT}{\color{black}}
\newcommand{\GGG}{\color{black}}
\newcommand{\nc}{\normalcolor}
\begin{document}

\thanks{May 16, 2022}

\begin{abstract}
Visco-energetic solutions have been recently advanced as a new solution concept for rate-inde\-pen\-dent systems, 
alternative to energetic solutions/quasistatic evolutions and balanced viscosity solutions. In the spirit of this novel concept, we revisit the analysis of 
the variational model 
proposed by Francfort and Marigo  
for the quasi-static 
crack growth in brittle materials, 
in the case of antiplane shear. In this context, visco-energetic solutions can be constructed by perturbing the time incremental scheme for quasistatic evolutions by means of 
a viscous correction
inspired by the term introduced by Almgren, Taylor, and Wang in the study of mean curvature flows. 
With our main result we prove 
the existence 
of a visco-energetic solution with a given initial crack. 
We also  show that, if the cracks have a finite number of tips evolving smoothly 
on a given time interval, 
visco-energetic solutions comply with Griffith's criterion. 
\end{abstract}

\maketitle

{\small

\bigskip
\keywords{\noindent {\bf Keywords:} variational models,
energy minimization, visco-energetic solutions, 
crack propagation,
 Griffith's criterion.}

\bigskip
\subjclass{\noindent {\bf 2020 Mathematics Subject Classification:}
 74R10, 
 74G65, 
 49Q20, 
 35Q74.
 }
 }
\bigskip
\bigskip

\section{Introduction}
 The variational approach to  brittle fracture, based on the classical theory by \textsc{Griffith} \cite{Grif20PRFS},   was initiated more than twenty years ago by 
 \textsc{Francfort} and \textsc{Marigo} \cite{Francfort-Marigo98}  (cf.\ also  \cite{BFM-book}).   
 In these models crack growth 
 results from a trade-off between the competing mechanisms of 
 \begin{itemize}
 \item[-]
 energy conservation, with the \emph{driving energy}  
 given by the stored elastic energy; 
\item[-]
energy dissipation, 
which takes into account the \emph{dissipated energy} 
spent to open the crack.
\end{itemize} 
 In the case of \emph{antiplane shear} the reference configuration is 
 represented by a  bounded, 
 connected, 
 Lipschitz domain $\Omega \subset \R^2$, 
 the displacement $u\colon \Omega \to \R$ is scalar 
  and the cracks are 
  represented by 
  compact subsets $K$ of $\overline \Omega$. 
  The evolution is triggered by a prescribed  time dependent boundary condition $u=g(t)$ on a subset
$ \partial_D \Omega$ of $\partial\Omega$. 
  According to   the model by \textsc{Francfort} and \textsc{Marigo}, 
  for a 
  linearly elastic homogeneous isotropic material
  the competing energy terms are 
 \begin{align}
 &\text{\emph{driving energy:}}\quad
 \label{driving-energy-INTRO}
\calE(t,K) :=  
\min \Big\{ \int_{\Omega{\setminus}K} \tfrac12  |\nabla u|^2 \dd x\, : \  u = g(t)  \text{ on } \partial_D \Omega \setminus K\Big\},
\\
&\text{\emph{dissipated energy:}}\quad
  \huno(K(t){\setminus}K(s)),\text{ where }\huno \text{ denotes the }1\text{-dimensional Hausdorff measure.}  \nonumber
\end{align}
For simplicity the elastic constant and the toughness of the material are normalized to $1$. 

If $\K$ denotes 
the collection of all compact subsets of $\overline\Omega$, \REVIS a quasistatic evolution for the brittle fracture model 
 is a function $K: [0,T]  \to   \K$ 
  fulfilling the following conditions: \EEE
\begin{enumerate}
\item[(I)] \emph{irreversibility}: $K(s) \subset K(t)$ for all $0 \leq s \leq t \leq T$; 
\item[(S)] \emph{stability}: at every $t\in [0,T]$ we have 
\begin{equation}
\label{StabEn}
\ene t{K(t)} \leq \ene t{K'} + \huno(K'{\setminus}K(t)) \qquad \text{for all } K' \in  \K  \text{ with } K' \supset K(t),
\end{equation}
namely the release of potential energy when passing from the current state $K(t)$ to any other state  $K'\in \K$  is smaller than the energy dissipated, that is why \eqref{StabEn} can also be understood as a stability condition;
\item[(E)] \emph{energy-dissipation balance}: 
\begin{equation}
\label{EBEn}
\ene t{K(t)} +\huno(K(t){\setminus}K(s)) = \ene s{K(s)} + \int_s^t \partial_t \ene r{K(r)} \dd r \qquad \text{for all } 0 \leq s \leq t \leq T,
\end{equation}
involving  the stored energy at the process times $s$ and $t$, the energy dissipated in the time interval $[s,t]$, and the work of the external forces  represented by the integral term. 
\end{enumerate}
\par
 \REVIS Condition  (E) was introduced in   the realm of crack propagation in  \cite{DMToa02}; \REVISDOUBT its key role was also highlighted in  \cite{MieThe99MMRI, MielkeTheilLevitas02, MieThe04RIHM},  where the closely related concept of energetic solution to a rate-independent system was advanced.  \REVIS
 In  \cite{DMToa02}, 
   the  notion of quasistatic evolution for  cracks was  analyzed in the antiplane case,
  \EEE imposing a bound on the number of connected components of the crack. \REVIS Thus,  
  the state space $\K$ was replaced \EEE by the space $\Kmf$ of all compact subsets of $\overline\Omega$ with at most $m$ connected components and finite $1$-dimensional Hausdorff measure. The existence  of  quasistatic evolutions  satisfying (I), (S), and (E) was proved  
  by constructing discrete-time approximate solutions: given a  partition $0 = t_\tau^0 <t_\tau^1<\ldots<t_{\tau}^{N_\tau} = T$ of $[0,T]$,  the time incremental minimization scheme
\begin{equation}
\label{TIM-QE}
K^i_\tau \in \Argmin \big\{  \calE(t^i_\tau,K)+\huno(K{\setminus}K_\tau^{i-1})\, : \  K \in \Kmf, \ K \supset K_\tau^{i-1} \big\} \qquad \text{for } i=1,\ldots, N_\tau 
\end{equation}
provides an approximate solution which converges to a continuous-time solution as  the time step tends to $0$. Ever since, the analysis of quasistatic evolutions  for crack propagation models has been extended in several directions, cf.,  e.g., \cite{Chambolle03, Francfort-Larsen, DMFT05,DM-Lazzaroni,Friedrich-Solombrino}. 
\REVIS In fact, \EEE  thanks to their flexibility and robustness, the notion of quasistatic evolution and the parallel concept of \emph{energetic solution}  
  have been extensively applied to a broad class of rate-independent systems (cf.\ \cite{MieRouBOOK} for a survey).
\par
Nonetheless, it has been known for some time that  quasistatic evolutions/energetic solutions have a drawback. Namely, when the energy functional driving the system is nonconvex, such evolutions, as functions of time, may have  `too early'  and  `too long'  jumps  between energy wells,
cf.,  e.g., 
\cite[Ex.\ 6.3]{KnMiZa08ILMC},
and the full characterization of energetic solutions to $1$-dimensional rate-independent systems proved in  \cite{RosSav12}. Essentially,
this is 
due to  the \emph{global} character of the stability 
condition (S), 
 which  involves  the overall energy landscape. 
 These considerations have motivated the quest of alternative weak solvability  notions based on  local, rather than global, minimality. 
 \par
 To our knowledge, the first attempt in this direction dates back to \cite{DT02MQGB}, where,  as an  alternative 
 to \eqref{TIM-QE}, the following 
 time incremental minimization scheme was proposed for brittle fracture growth (still in the two-dimensional antiplane case):
 \begin{equation}
\nonumber
(u_\tau^i, K^i_\tau) \in \Argmin \big\{  \calE(t^i_\tau,K)+ \huno(K{\setminus}K_\tau^{i-1}) + \lambda \| u - u_\tau^{i-1}\|_{L^2(\Omega)}^2 \, : \, K \in \Kmf, \, K \supset K_\tau^{i-1}, u \in H^1(\Omega{\setminus}K) \big\}  
\end{equation}
for $ i=1,\ldots, N_\tau$, 
with $\lambda>0$ a \emph{fixed} constant.    The additional $L^2$-contribution penalizes the $L^2$-distance of the updated discrete displacement $u_\tau^i$ from the previous $u_\tau^{i-1}$,  and thus enforces 
\emph{locality} on the time discrete level. We also record the notion of fracture evolution by local minimality   advanced in \cite{Larsen-epsilon}.
\par
A more general approach to a reformulation of rate-independent evolution devoid of unnatural jumps was pioneered in  \cite{EfeMie06RILS}.  It stemmed  from the idea that rate-independent evolution 
originates in the limit of systems governed by  
two  time scales: the  `fast' inner scale of the system and the `slow', but dominant,  time scale of the external forces. In this perspective, 
 viscous dissipation is negligible during a time interval in which the system evolves continuously,  but it is 
expected to enter into the system behavior at jumps. 
Thus, one  selects those solutions to the original rate-independent system that arise as limits 
of solutions to the viscously regularized system. This procedure leads to an alternative solution concept 
featuring a \emph{local}, in place of a global, stability/minimality condition, and an energy-dissipation balance 
that provides a description of the system behavior at  jumps,  with the possible onset of `viscous behavior'. 
On the one hand, the vanishing-viscosity technique  has been formalized in an abstract setting in 
\cite{MRS12, MRS13} (cf.\ also \cite{Negri14}), that   codified  the properties of these  `vanishing-viscosity solutions'  in
the notion of 
\emph{balanced viscosity} solution. On the other  hand, 
it has been developed and refined in various concrete applications, cf., e.g.,
\cite{DalDesSol11, BabFraMor12, KRZ13, Crismale-Lazzaroni}). 
\par 
As far as  brittle fracture models are concerned, however, 
the 
 vanishing-viscosity approach has been carried out  
 either assuming that the crack path is a priori known, or 
 in specific geometric settings, cf., e.g., \cite{ToaZan06?AVAQ, Cagnetti08, KnMiZa08ILMC, KMZ10-poly,  Lazzaroni-Toader, Lazzaroni-Toader2,  Almi17, CL17fra,  ALL19}). These restrictions are related to the fact that the construction of  balanced viscosity solutions  ultimately relies on the validity of a suitable chain rule for the energy functional driving the system, which seems to be hard  to obtain for  more general fracture models. 
 \par
 That is why, finding an appropriate mathematical formulation for the evolution of brittle fracture as an alternative to the notion of quasistatic evolution, without specific assumptions on the cracks, is still an up-to-date and challenging issue. In this paper we aim to contribute to it by showing how the concept of \emph{visco-energetic} solution to a rate-independent system, recently introduced  in  \cite{SavMin16},  can be successfully applied to the  two-dimensional antiplane model first addressed in \cite{DMToa02}. 
 \par
 As we will see, visco-energetic solutions have a structure in between that of energetic and balanced viscosity solutions. This intermediate character is also apparent in their characterization, obtained for one-dimensional systems in \cite{Minotti17}, in the results of 
 \cite{RS17}, and in their applicability to rate-independent systems in damage, plasticity, and delamination, cf.\ \cite{Rossi2019}. Here we are going to demonstrate that the model by \textsc{Francfort} and \textsc{Marigo}, at least in the versions considered in \cite{DMToa02} and \cite{Chambolle03}, provides yet another example of rate-independent process for which
 visco-energetic solutions are an adequate tool, while the
  balanced viscosity concept fails to apply.
\subsection*{Visco-energetic evolution of brittle fracture}
Visco-energetic  (hereafter often abbreviated as $\VE$) solutions were introduced in  \cite{SavMin16} in the context of an abstract rate-independent system whose state space  is a Hausdorff topological space $(X, \sigma)$, 
endowed with 
\begin{enumerate}
\item  a driving energy functional $\calE\colon  [0,T]\times X \to (-\infty+\infty]$; 
\item  a (possibly asymmetric, quasi-)distance $\mdn\colon  X \times X \to [0,+\infty] $ that encodes the energy dissipation of the system.
\end{enumerate}

In the spirit of \cite{DT02MQGB},  the key idea at the core of $\VE$ concept is to enforce locality by suitably  perturbing the time incremental minimization scheme. In  the general context addressed in  \cite{SavMin16},
this perturbation is obtained by means of 
\begin{enumerate}
 \setcounter{enumi}{2}
 \item
 a \emph{viscous correction}, namely a  lower semicontinuous functional $\delta \colon  X \times X \to [0,+\infty]$, compatible with $\mdn$ in a suitable sense.
 \end{enumerate}
The above elements constitute a  \emph{viscously corrected} rate-independent system 
$(X, \calE, \sigma, \mdn, \corrn)$. 
\RIC We now illustrate how the brittle fracture model  analyzed in  \cite{DMToa02} can be revisited  within the approach of  \cite{SavMin16} by a careful choice of the ambient space, the driving energy, the dissipation quasi-distance\BLUE , \RIC and the viscous correction. 
In what follows, we are going to work 
in the space 
\begin{gather}
\label{ambient-space-intro}
  X =
  \Kf:=\Big\{K\in \K: K\text{ has a finite number of connected
    components},\ \huno(K)<\infty\Big\}\\
    \nonumber
  \text{ endowed with the topology of the Hausdorff distance
    $\htop$}.
\end{gather}
The evolution is driven by the energy functional $\calE : [0,T]\times \Kf \to [0,+\infty)$ defined in \eqref{driving-energy-INTRO}. 

Instead of imposing an a priori  bound on the number of connected components of the crack  \RIC as in  \cite{DMToa02},  \EEE we penalize the nucleation of new connected components by means of the quasi-distance 
$\icostname(K,K')$ defined as the number of connected components of $K'$ disjoint from $K$.
Indeed, 
we fix a constant $\lambda>0$ and we 
consider  the  dissipation distance  $\mdn\colon  \K \times \K \to [0,+\infty]$ defined by
\begin{equation}
\label{diss-dist-intrp}
\md{K}{K'}: = \huno(K'{\setminus}K) +\lambda\,\icost{K}{K'} \qquad 
\end{equation}
if $K\subset K'$, and set equal to $+\infty$ otherwise. The additional term $\lambda\,\icostname(K,K')$  controls the number of connected components: \BLUE indeed, \RIC  the constant $\lambda$ accounts for the \EEE energetic cost of the nucleation of a new connected component of the crack. \RIC This novel contribution
 plays a crucial role  in the proof of the 
 lower semicontinuity of $\mdn$ with respect to the Hausdorff distance,  as stated in  Proposition \ref{lsc-of-d} ahead. \BLUE This \RIC result is
 a generalization of the classical Go\l \c ab Theorem: it provides a lower semicontinuity estimate for  the Hausdorff measure $\huno(K_n{\setminus}H_n)$, for two sequences $(H_n)_n$ and $(K_n)_n$   of compact subsets  of $\overline\Omega$
that may \BLUE possibly \RIC have infinitely many connected components, provided that 
  a bound on the number of connected components of the sets  $(K_n{\setminus}H_n)_n$ is imposed. 
  Another key structural condition  of $\mdn$ is the triangle inequality, which follows from the (non-trivial)  triangle inequality for $\icostname$ proved in Lemma \ref{triangle-ineq}.
  \EEE

 We choose as viscous correction the functional $\corrn\colon  \K\times \K \to [0,+\infty]$ given by
\begin{equation}
\label{delta-intrp}
\REVISDOUBT \delta({K},{K'}): = \ATW{K}{K'}+ \mu\,\icost{K}{K'}, \qquad  \text{with } \ATW{K}{K'}: =  \int_{K'{\setminus}K} \dist x {K} \dd \huno(x),    \qquad \text{if } K\subset K'\,, \EEE
\end{equation}
for some $\mu>0$. \REVIS Unlike $\mathsf{d}$, the functional $\delta$ does not satisfy the triangle inequality.  Indeed, \EEE in  the regular case considered in Section \ref{s:6}, when $K'$ is close to $K$ the integral contribution
\REVISDOUBT $\ATWname$ \EEE
 to $\delta$ is approximately the sum of the squares of the length increments
of the branches of the crack. 
  \REVISDOUBT The functional 
 $\ATWname$ \EEE
 is inspired by the one  \EEE
 introduced by \textsc{Almgren}, \textsc{Taylor} and \textsc{Wang} in \cite{AlmgrenTaylorWang93}, cf.\ also \cite{DG93, LuckhausSturzenhecker95},
where it plays the role of a sort of  \REVIS a squared \EEE  $L^2$-distance between $K$ and $K'$ in the \emph{Minimizing Movement} scheme for the mean curvature flow. 
A similar term has already been used in \cite{Lazzaroni-Toader, Lazzaroni-Toader2} to study a viscosity-driven model of crack growth.
\REVIS The higher order nature of  $\delta$ 
 \REVISDOUBT is in fact revealed 
 by the following inequality, which relates $\ATWname$,  $\huno$, 
and the Hausdorff distance $\mathsf{h}$:
\begin{equation}
\label{higher-order-delta}
 \ATW{K}{K'} \leq   \mathsf{h}(K,K') \, \huno (K'{\setminus}K) \,.
 \end{equation} \EEE
\par
\RIC Let us point out that \REVISDOUBT the term $\ATW{K}{K'}$
  in \eqref{delta-intrp} is well defined for arbitrary compact sets $K$ and $K'$ and,  \RIC unlike in  \cite{Lazzaroni-Toader, Lazzaroni-Toader2}\BLUE, \RIC no structural assumptions are imposed. In particular, no a priori bound \BLUE on \RIC the number of connected components of the cracks \BLUE is \RIC required\BLUE, \RIC thanks to 
 the term $\mu\,\icost{K}{K'}$ (again,   the constant $\mu$ can be interpreted as an additional energetic cost due to the nucleation of a new connected component). \BLUE Like \RIC
for the dissipation distance $\mdn$,  the latter contribution to the viscous correction  indeed
  plays an important part in the proof of 
  the lower semicontinuity properties of $\delta$ (Proposition \ref{Golab3}). \EEE
  \par
   \REVIS
  We mention here that, while we have set all  physical constants equal to $1$ for simplicity, we have preferred to emphasize the dependence of the model on the regularization constants $\lambda$ and $\mu$, which can be 
  chosen arbitrarily small. \EEE 
  \par
Hereafter,  we shall refer to the quintuple
\[
\CRIS  \quad \text{as a  \emph{viscously corrected} rate-independent system for brittle fracture}.
\]
\par
Along the footsteps of \cite{SavMin16} we 
 construct discrete solutions by solving 
 time incremental minimization scheme 
\begin{equation}
\label{IMT}
K^i_\tau \in \Argmin_{K \in \Kf} \left( \calE(t^i_\tau,K) + \mathsf{d}(K^{i-1}_\tau,K) +   \delta(K^{i-1}_\tau, K)  \right) \qquad \text{for } i=1,\ldots, N_\tau,
\end{equation}
with  $K_\tau^0: = K_0$ the initial crack.
\REVISDOUBT Since
\[
\cmd{K}{K'}: = \mathsf{d}(K,K') + \delta(K,K') = \calH^1(K'{\setminus}K)+
\ATW{K}{K'}  + (\lambda{+}\mu)\alpha(K,K') \qquad \text{if } K \subset K',
\]
the minimum problem \eqref{IMT} rephrases in the following form 
\begin{equation}
\label{Dbig-intro}
K^i_\tau \in \Argmin_{K \in \Kf, \  K \supset K_\tau^{i-1}} \left( \calE(t^i_\tau,K) + \calH^1(K{\setminus}K^{i-1}_\tau)  + 
  \ATWname(K^{i-1}_\tau, K) + (\lambda{+}\mu)\alpha(K_\tau^{i-1},K)  \right) 
\end{equation}
which can be immediately compared with the classical minimization scheme \eqref{TIM-QE} for quasistatic evolutions. \EEE
It turns out that,  for every $i=1,\ldots, N_\tau$ \eqref{IMT}
admits  a solution thanks to the aforementioned lower semicontinuity properties of $\mathsf{d}$ and $\delta$,  and of the energy $\calE$. Our main result, \textbf{Theorem \ref{thm:existVEcrack}} ahead, states that there exists a vanishing sequence $(\tau_j)_j$ of time steps  along  which the 
discrete solutions $(\pwc K{\tau_j})_j$, defined by piecewise constant interpolation of the minimizers 
$(K_\tau^i)_{i=1}^{N_\tau}$, converge to a visco-energetic solution of the viscously corrected system $\CRIS$. The latter is a curve $K\colon  [0,T]\to \Xamb$, with jump set $\mathrm{J}_K$,  complying with the following conditions:
\begin{enumerate}
\item[(I)] \emph{irreversibility}: $K(s) \subset K(t)$ for all $0 \leq s \leq t \leq T$; 
\item[($\mathrm{S}_{\mathrm{VE}}$)]
\REVISDOUBT 
 \emph{$(\mathsf{D})$-stability}:  \EEE
   at every $t\in [0,T] \setminus \mathrm{J}_K$ there holds
\[
\begin{aligned}
\ene t{K(t)}  & \leq \ene t{K'} +  \REVISDOUBT \cmd{K(t)}{K'} 
\\
& 
\REVISDOUBT = \ene t{K'} + \calH^1(K'{\setminus}K(t)) + \ATW{K(t)}{K'} + (\lambda{+}\mu)\alpha(K(t),K')  \EEE
  \text{ for all } K' \in \Xamb \text{ with }K'\supset K(t);
 \end{aligned}
\]
\item[($\mathrm{E}_{\mathrm{VE}}$)] the  \emph{energy-dissipation balance}
\[
\ene t{K(t)}  + 
\REVISDOUBT \calH^1(K(t){\setminus}K(s)) +  \EEE
 \Jvar {\vecostname}{K}{s}{t}  = \ene s{K(s)} + \int_s^t \partial_t \ene r{K(r)} \dd r \qquad \text{for all } 0 \leq s \leq t \leq T\,.
\]
\end{enumerate}
 \REVISDOUBT Condition ($\mathrm{E}_{\mathrm{VE}}$)
 features an additional contribution in comparison with the energy-dissipation balance \eqref{EBEn}. Indeed, 
   the term \EEE $ \Jvarname{\vecostname}$ keeps track of  the energy dissipated at jumps and is   defined in terms of the `visco-energetic'  cost $\vecostname$ introduced in \eqref{ve-cost-intro}
below. 
\par
As we have mentioned before, the structure of this solution concept is in between those of quasistatic evolutions 
(cf.\ \eqref{StabEn} \& \eqref{EBEn}) and of balanced viscosity solutions. On the one hand,  the stability condition ($\mathrm{S}_{\mathrm{VE}}$),  though featuring \REVISDOUBT 
 the additional \EEE viscous correction $\delta$ and holding only outside the jump set $\mathrm{J}_K$,
still retains a \emph{global} character.
  On the other hand,  in the energy  balance ($\mathrm{E}_{\mathrm{VE}}$) the 
 dissipation  of energy is not only recorded by the 
 \REVISDOUBT $\calH^1$-length of the opening of the 
 crack  in the interval $[s,t]$ \EEE
  but, like in the case of balanced viscosity solutions, also by   an additional  term 
 that measures the energy dissipated at the jump points of $K$ in 
 $[s,t]$, i.e.\ $ \Jvar {\vecostname}{K}{s}{t}$.   The jump cost $\Jvarname{\vecostname} $ is, in turn, defined in terms of a functional $\vecostname$ 
 obtained by minimizing a suitable transition cost along {\GGG monotone} curves $\teta$ connecting the two end-points $\lli Kt$
	and $\rli Kt$ of the curve $K$ at $t\in \mathrm{J}_K$,  namely
	\begin{equation}
	\label{ve-cost-intro}
	\begin{aligned}
\vecost {t}{\lli K t }{\rli K t}  := \inf \Big \{     \mathrm{Trc}_{\VE}(t;\teta, E)\, :   
E \Subset \R,\
  \teta\in \mathrm{C}(E;\Xamb),\
      \teta(E^\pm) = K(t\pm) \Big\}\,,
    \end{aligned}
  \end{equation}
where $E^-:=\inf E$ and $E^+:=\sup E$. The transition cost 
    \[
  \mathrm{Trc}_{\VE}(t;\teta, E) :  =   
\REVISDOUBT 
\Gap{\ATWname}\teta E   + 
(\lambda {+} \mu)
\Vars {\icostname}{\teta} E 
  + \sum_{s\in E{\setminus}\{\sup E\}} \rstabt t{\teta(s)}
   \]
consists of
\begin{enumerate}
 \item   a quantity related to the `gaps', or `holes', of the set $E$ (which is just  an  arbitrary compact subset of $\R$ and may have a more complicated structure than an interval);  
    \item 
 \REVISDOUBT    $\Varname\alpha$, i.e.\ the total variation of the curve $\teta$ induced by $\alpha$, 
    modulated by the constant $(\lambda{+}\mu) $ also featuring in the minimization scheme \eqref{Dbig-intro}; \EEE  
    \item
a functional $\mathscr{R} \colon  [0,T]\times \Kf \to [0,\infty) $ 
that keeps track of 
 the violation  of the $\VE$-stability condition along the curve $\teta$, 
as it fulfills
$
\rstabt t{\teta(s)}>0  $ 
if and only if $\teta(s)$ does not comply with  ($\mathrm{S}_{\mathrm{VE}}$) at the  process time $t$. 
\end{enumerate}
It is in terms of 
the cost $\vecostname$ that the $
\VE$ concept offers an alternative  description of the system behavior at jumps, in comparison with quasistatic evolutions. Indeed, $\VE$ solutions 
satisfy the jump conditions 
\[
\label{jump-conditions-intro}
\ene t{\lli Kt} - \ene t{\rli Kt}   =   
\REVISDOUBT \calH^1(\rli Kt{\setminus}\lli Kt)  \EEE
+ \vecost t{\lli Kt}{\rli Kt} \qquad \text{for all } t \in \mathrm{J}_K,
\]
cf.\ Proposition \ref{prop:charact-VE} ahead. 
 Thus, the release of elastic energy at a jump point is not only balanced by the length of the crack opening (like it would be for  quasistatic evolutions), but also by the `visco-energetic' cost between the two end-points $\lli Kt$ and $\rli Kt$. 
 \par
 Nonetheless, if, along the footsteps of \cite{DMToa02}, we assume that, on some interval $(\tau_0,\tau_1) \subset [0,T]$ the $\VE$ solution constructed in Theorem \ref{thm:existVEcrack}  has the additional property that the cracks $K(t)$ have a fixed number of tips, which evolve \REVIS in an  absolutely continuous way \EEE  on the interval $(\tau_0,\tau_1)$ along simple and disjoint paths, then we can prove that  Griffith's criterion for crack growth is satisfied, cf.\  Theorem \ref{th:griffith} ahead. This result is  completely analogous to \cite[Thm.\ 8.4]{DMToa02} for quasistatic evolutions.  It reflects the fact that  $\VE$ solutions  essentially differ from  quasistatic evolutions in the description of the energetic behavior of the system at jumps, cf.\ the characterization provided by Proposition \ref{prop:charact-VE} ahead. 
 \paragraph{\bf Plan of the paper.}
In Section \ref{s:2} we recall some preliminary results on Hausdorff convergence, and on the properties of the elastic energy,  proved in   \cite{DMToa02}. Then, in Section \ref{s:3} we introduce the dissipation distance $\mdn$ and the viscous correction $\delta$, and settle their basic properties. Section \ref{s:4} is devoted to the precise definition of visco-energetic solutions and to the statement of our main existence result, Theorem \ref{thm:existVEcrack}. 
In Section \ref{s:5}, the proof of Theorem \ref{thm:existVEcrack} is carried out by showing that the viscously corrected system for brittle fracture $\CRIS$ satisfies the conditions of the  general existence result
\cite[Thm.\ 3.9]{SavMin16}, which thus applies yielding the existence of  $\VE $ solutions.
The main result of  Section \ref{s:6}, Theorem 
\ref{th:griffith}, provides 
a characterization of the behavior at the crack tips of a  $\VE$ solution $K:[0,T]\to \Kf$
 in an interval $(\tau_0,\tau_1)$ during which $K$ evolves \emph{continuously} as a function of time and the crack set $K(t)$ fulfills suitable geometric conditions. Finally, in Section \ref{s:7} we show how, relying on the results from \cite{Chambolle03}, our existence result for $\VE$ solutions can be extended to the planar case of linearized elasticity.
\section{Notation and preliminaries}
\label{s:2}
Throughout the paper, $\Om$ is a fixed bounded connected open subset
of $\R^2$ with Lipschitz
boundary.
As in \cite{DMToa02}, we shall additionally suppose that the boundary of $\Omega$ decomposes into 
\begin{itemize}
\item[-] a Neumann part $\partial_N \Omega$, which is a (possibly empty) relatively open subset of $\partial\Omega$ with a finite number of connected components;
\item[-] the \REVIS (non-trivial) \EEE  Dirichlet part $\partial_D \Omega: = \partial\Omega \setminus \overline{\partial_N \Omega}$; it turns out that  $\partial_D \Omega$ is also a relatively open subset of $\partial\Omega$ with a finite number of connected components.
\end{itemize}
\par
The one-dimensional Hausdorff measure is denoted by $\huno$. The set of all compact subsets
of $\overline\Om$ is denoted by $\K$, whereas 
\GGG
$\Kf$ (resp.~$\Kmf$)
is  the set of all compact subsets $K$ of $\overline\Om$
with $\huno(K)<+\infty$ and 
a finite number of (resp.~at most $m$) connected components. \EEE
\par
\GGG The spaces $\K,\Kf$
are
\nc endowed with the \emph{Hausdorff distance} $\hd$, defined by 
\begin{equation}
\label{hausd-distance}
\hd(H,K):=
\max\big\{ \sup_{x\in H}{\rm dist}(x,K),
\sup_{y\in K}{\rm dist}(y,H)\big\} \qquad \text{for all } H,\, K \in \K,
\end{equation}
where, as usual, $\mathrm{dist}(x,K)  : = \min_{y\in K} |x{-}y| $, with the convention that 
$\mathrm{dist}(x,\emptyset) = \mathrm{diam}(\Omega)$ and $\sup
\emptyset =0$, so that $\hd(\emptyset, K) = 0$ if $K=\emptyset$ and
$\hd(\emptyset, K) =\mathrm{diam}(\Omega)  $ if $K \neq \emptyset$.
\GGG In particular, $\emptyset$ is an isolated point of $\K$. \nc
 Given $(K_n)_n, \, K \subset \K$, we will often write $K_n \hacca K$ whenever $\mathsf{h}(K_n,K)\to 0$. 
  With a slight abuse of notation, the topology induced by 
 the Hausdorff distance is still denoted by $\hd$, and the corresponding  product topology on $[0,T]\times \K$ by $\htop_{\R}$. 
 The following compactness theorem is 
well known (see, e.g., 
\cite[Blaschke's Selection Theorem]{Rog}).
\begin{theorem}\label{thm:compactness}
The metric space $(\K, \hd)$ is compact.
\end{theorem}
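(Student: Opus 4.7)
The plan is to deduce sequential compactness of $(\K,\htop)$ from the classical fact that a metric space is compact if and only if it is both complete and totally bounded. Since the empty set is isolated in $(\K,\htop)$ by the convention recalled just before the statement, I can treat it separately and assume from the outset that all compact subsets under consideration are nonempty; thus the problem reduces to showing that the set $\K_*:=\K\setminus\{\emptyset\}$, endowed with $\htop$, is complete and totally bounded.

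For \emph{total boundedness}, I would fix $\ep>0$ and exploit compactness of $\overline\Omega$ in $\R^2$ to select a finite cover by open balls $B(x_1,\ep),\ldots,B(x_N,\ep)$. For any $K\in\K_*$ let $\sigma(K):=\{i\in\{1,\ldots,N\}:B(x_i,\ep)\cap K\neq\emptyset\}$ and set $K_\sigma:=\{x_i:i\in\sigma(K)\}$. A direct check from the definition \eqref{hausd-distance} gives $\htop(K,K_\sigma)\le\ep$: every point of $K$ lies in some $B(x_i,\ep)$ with $i\in\sigma(K)$, and conversely every $x_i\in K_\sigma$ is within distance $\ep$ from a point of $K$. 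Since there are only finitely many subsets of $\{x_1,\ldots,x_N\}$, this yields a finite $\ep$-net in $\K_*$.

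For \emph{completeness}, given a Cauchy sequence $(K_n)_n\subset\K_*$, I would define the candidate limit set by the Kuratowski-type construction
\[
K:=\bigcap_{N\in\N}\overline{\bigcup_{n\ge N}K_n},
\]
which is automatically a closed, hence compact, subset of $\overline\Omega$. The Cauchy property forces $\mathrm{diam}(K_n)$ to stay bounded and the sets $K_n$ to remain uniformly close; using this, one shows that $K$ is nonempty (pick $x_{n_k}\in K_{n_k}$ along a subsequence, extract a convergent subsequence in the compact set $\overline\Omega$, and verify the limit lies in $K$) and that $\htop(K_n,K)\to 0$. The latter verification splits into the two one-sided bounds in \eqref{hausd-distance}: the bound $\sup_{y\in K}\mathrm{dist}(y,K_n)\to 0$ follows from the definition of $K$ as a nested intersection, while the bound $\sup_{x\in K_n}\mathrm{dist}(x,K)\to 0$ uses the Cauchy condition together with the fact that any limit point of a convergent sequence $x_{n_k}\in K_{n_k}$ belongs to $K$.

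The main obstacle is the second half of the completeness argument: showing $\sup_{x\in K_n}\mathrm{dist}(x,K)\to 0$. A naive extraction can only produce a \emph{single} limit point in $K$ from each sequence $(x_n)_{n},\, x_n\in K_n$; one has to argue that if some $x_n\in K_n$ stayed at distance at least $\eta>0$ from $K$ for infinitely many $n$, then by extracting a convergent subsequence $x_{n_k}\to\bar x$ and using the Cauchy property to approximate $\bar x$ by points of $K_m$ for all large $m$, one would place $\bar x$ into $K$, a contradiction. Once completeness and total boundedness are established, compactness of $(\K,\htop)$ follows from standard metric space theory.
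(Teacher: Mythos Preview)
Your proof proposal is correct and follows the standard route via completeness plus total boundedness; the argument you outline for both parts is sound, including your handling of the ``main obstacle'' in the completeness step. However, the paper does not actually prove this theorem: it simply cites it as the classical Blaschke Selection Theorem (referring to \cite{Rog}), so there is no proof in the paper to compare against. Your argument is a perfectly valid way to supply the omitted details.
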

 We will also make use of the following result (cf.\ \cite[Cor.\ 3.3, 3.4]{DMToa02}),  
derived from  the Go\l \c ab theorem (cf., e.g., \cite[Thm.\ 10.19]{Morel-Solimini})
\REVIS and extending the latter, valid in the class $\Kuno$, to the class  $\tKmf$, $m\geq 1$. 
 It 
 shows that  
$\tKmf$ \EEE is closed  w.r.t.\ Hausdorff convergence, and that, with respect to this notion of convergence  
the Hausdorff measure $\huno$ is lower semicontinuous  on $\tKmf$ (while it is not lower semicontinuous on $\K$).
\begin{theorem}\label{Golab2}
Let $m\geq 1$ and  $(K_n)_n \subset \tKmf$. 
\begin{itemize}
\item[(i)] If $\mathsf{h}(K_n, K)\to 0$  as $n \to\infty$ for some $K \in \K$, then $K \in \tKmf$ and 
$$
\huno(K\cap U)\le \liminf_{n\to\infty} \,\huno(K_n\cap U)
$$
for every open set $U\subset \R^2$. 
\item[(ii)] In addition, suppose that $(H_n)_n, H \in \K$, with $\mathsf{h}(H_n, H)\to 0$ as $n\to\infty$. Then,
$$
\huno(K{\setminus} H)\le \liminf_{n\to\infty} \,\huno(K_n{\setminus}H_n)\,.
$$
\end{itemize}
\end{theorem}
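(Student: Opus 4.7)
The plan is to reduce both parts of Theorem \ref{Golab2} to the classical Gołąb theorem for connected sets (valid on $\Kuno$) via a componentwise decomposition.

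\smallskip

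For part (i), I would start by writing each $K_n$ as a disjoint union $K_n=K_n^1\cup\ldots\cup K_n^m$ of connected components (padding with repetitions of a single point of $K_n$ if $K_n$ has fewer than $m$ components, so that the index set is uniform in $n$). Passing to a subsequence along which $\huno(K_n\cap U)$ realizes $\liminf_n\huno(K_n\cap U)$ and applying Blaschke's compactness (Theorem \ref{thm:compactness}) $m$ times, I may assume, up to a further subsequence, that for each $j\in\{1,\ldots,m\}$ the component $K_n^j$ converges in Hausdorff distance to some $K^j\in\K$. A standard argument (points of $K$ are limits of points $x_n\in K_n$, each $x_n$ lying in some $K_n^{j(n)}$, and one extracts a constant index along a subsequence) yields $K=\bigcup_{j=1}^m K^j$. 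Since a nonempty Hausdorff limit of connected compact sets is connected, each $K^j$ is either empty or connected, whence $K\in\tKmf$.

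\smallskip

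With the decomposition in hand, the classical Gołąb theorem applied to each connected $K^j$ gives $\huno(K^j\cap U)\leq\liminf_n\huno(K_n^j\cap U)$ for every open $U\subset\R^2$. Combining countable subadditivity, Fatou for finite sums of nonnegative quantities, and the disjointness of $K_n^1,\ldots,K_n^m$ (which makes $\sum_{j}\huno(K_n^j\cap U)=\huno(K_n\cap U)$), I obtain
\[
\huno(K\cap U)\leq\sum_{j=1}^m\huno(K^j\cap U)\leq\sum_{j=1}^m\liminf_n\huno(K_n^j\cap U)\leq\liminf_n\huno(K_n\cap U),
\]
which is the lower semicontinuity claimed in (i).

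\smallskip

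For part (ii) I would approximate $\R^2\setminus H$ from the inside by the open sets $U_\varepsilon:=\{x\in\R^2:\mathrm{dist}(x,H)>\varepsilon\}$. Since $H_n\hacca H$, for every $\varepsilon>0$ there exists $n_\varepsilon$ such that $H_n\subset\{x:\mathrm{dist}(x,H)\leq\varepsilon\}$ for all $n\geq n_\varepsilon$, hence $U_\varepsilon\cap H_n=\emptyset$ and consequently $K_n\cap U_\varepsilon\subset K_n\setminus H_n$. Together with part (i) this yields
\[
\liminf_n\huno(K_n\setminus H_n)\geq\liminf_n\huno(K_n\cap U_\varepsilon)\geq\huno(K\cap U_\varepsilon).
\]
Since $H$ is closed, $U_\varepsilon\uparrow\R^2\setminus H$ as $\varepsilon\downarrow 0$, so $K\cap U_\varepsilon\uparrow K\setminus H$ and the continuity of $\huno$ on increasing sequences of Borel sets gives $\huno(K\cap U_\varepsilon)\to\huno(K\setminus H)$, concluding the proof. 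The main technical delicacy is in part (i): one must guarantee that the decomposition into connected components is compatible with Hausdorff convergence (hence the recourse to Blaschke and the identification $K=\bigcup_j K^j$), whereas part (ii) is then a routine exhaustion argument exploiting that $\huno$ is only required to be lower semicontinuous for the intersection with an open set.
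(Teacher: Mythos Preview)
Your proposal is correct and follows exactly the standard route: the paper does not prove Theorem~\ref{Golab2} in the text but quotes it from \cite[Cor.~3.3, 3.4]{DMToa02}, where precisely this componentwise reduction to the classical Go\l\c{a}b theorem is used for (i), and the exhaustion of $\R^2\setminus H$ by the open sets $U_\varepsilon=\{x:\mathrm{dist}(x,H)>\varepsilon\}$ combined with (i) yields (ii). The same decomposition strategy is also deployed later in the present paper, in the proof of Proposition~\ref{lsc-of-d}.
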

\paragraph{\bf Deny-Lions spaces and elastic energy}
\label{ss:D-L}
Along the footsteps of \cite{DMToa02},  we will work with the \emph{Deny-Lions} space \cite{Deny-Lions53} 
\begin{equation}
\label{Deny-Lions}
L^{1,2} (A): = \{ u \in L^2_{\mathrm{loc}}(A)\, : \ \nabla u \in L^2(A;\R^2)\}\,,
\end{equation}
for a given $A\subset \R^2$. If $A$ is bounded with Lipschitz boundary, then $L^{1,2} (A) =H^1(A)$, see  \cite[Prop.\ 2.2]{DMToa02}. 
For a given $g\in H^1(\Omega)$
and a given $K \in \Kf$,
 let us now introduce the space of \emph{admissible displacements}
\begin{equation}
\label{def-admis-u}
 \Adm {g}K: = \{ v \in L^{1,2}(\Omega{\setminus}K)\, :  \ v = g \  \ \text{on }  \partial_D \Omega \setminus K\}\,.
\end{equation}
In \eqref{def-admis-u} the equality $v=g$ is to be interpreted in the sense of traces. Note that the trace of $v$ 
on $\partial_D \Omega \setminus K$ is well defined, since $\partial\Omega$ is Lipschitz (see
e.g.\  \cite[Prop.\ 2.2]{DMToa02}). 
 \par
 As discussed in 
\cite[Sec.\ 4]{DMToa02},  the minimum problem 
\begin{equation}
\label{min-u}
\min_{v \in  \Adm {g}K} \int_{\Omega{\setminus}K} \tfrac12 |\nabla v|^2 \dd x  \qquad \text{ has a solution}. 
\end{equation}
We mention in advance that this minimization problem will be involved in the definition of the energy functional $\calE$ driving our system. It may happen that the minimizer is not unique, 
but, by strict convexity, any  two minimizers have the same gradient on $\Omega{\setminus}K$. The following result, proved in \cite[Thm.\ 5.1]{DMToa02}, shows the continuous dependence of these gradients on the set $K$ and on the boundary datum $g$, 
and will ensure the continuity properties of the energy functional $\calE$. 
\begin{proposition}
\label{prop:continuity-minimizers}
Let $m\geq 1$ and \REVIS let $(K_n)_n,\, K \in \tKmf$ \EEE fulfill $\sup_{n\in \N} \huno(K_n)<+\infty$ and  $\mathsf{h}(K_n,K) \to 0$ as $n\to\infty$. Let $(g_n)_n,\, g  \in H^1(\Omega)$  with $g_n\to g$ strongly in $H^1(\Omega)$. 
Let $(u_n)_n,\, u$ fulfill
\[
u_n \in \Argmin_{v \in  \Adm {g_n}{K_n}} \int_{\Omega{\setminus}K_n} \tfrac12 |\nabla v|^2 \dd x \qquad \text{for all } n \in \N, \qquad  u \in \Argmin_{v \in  \Adm {g}K} \int_{\Omega{\setminus}K} \tfrac12 |\nabla v|^2 \dd x\,.
\]
Then, 
\begin{equation}
\label{strong-converg-grad}
\nabla u_n \to \nabla u \qquad \text{ as $n\to\infty$ in $L^2(\Omega;\R^2)$},
\end{equation}
where $\nabla u_n$ and $\nabla u$ are regarded as functions defined a.e.\ in $\Omega$. 
\end{proposition}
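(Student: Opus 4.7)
My plan is to combine weak compactness from a uniform energy bound, a mutual-recovery-sequence argument exploiting the Hausdorff convergence $K_n \hacca K$ within the class $\tKmf$, and the strict convexity of the Dirichlet integral to upgrade weak to strong convergence of the gradients. First, I would extend $\nabla u_n$ by zero on $K_n$ so that it lives in $L^2(\Omega;\R^2)$. Testing the minimization problem with the admissible competitor $g_n|_{\Omega \setminus K_n} \in \Adm{g_n}{K_n}$ gives $\int_{\Omega\setminus K_n} |\nabla u_n|^2 \leq \int_{\Omega \setminus K_n} |\nabla g_n|^2$, uniformly bounded since $g_n \to g$ in $H^1(\Omega)$. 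Hence, up to a subsequence, $\nabla u_n \weakto \xi$ in $L^2(\Omega;\R^2)$ for some $\xi$.

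Next I would identify $\xi$ as the gradient of an admissible $u_\infty \in \Adm{g}{K}$. Since $\tKmf$ is closed under Hausdorff convergence by Theorem \ref{Golab2}(i), the limit $K$ belongs to $\tKmf$, so $\Omega \setminus K$ is a suitable domain for the Deny–Lions theory. On any open $U \Subset \Omega \setminus K$ the Hausdorff convergence ensures $U \cap K_n = \emptyset$ for $n$ large, so $u_n \in H^1(U)$ (up to additive constants) converges weakly to some $u_\infty$ with $\nabla u_\infty = \xi$ on $U$; exhausting $\Omega \setminus K$ by such $U$'s produces $u_\infty \in L^{1,2}(\Omega \setminus K)$. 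The Dirichlet trace condition $u_\infty = g$ on $\partial_D\Omega \setminus K$ follows by passing to the limit on boundary portions that are eventually contained in $\partial_D\Omega \setminus K_n$, using $g_n \to g$ in $H^1$. For minimality, I would construct, for an arbitrary $v \in \Adm{g}{K}$, a recovery sequence $v_n \in \Adm{g_n}{K_n}$ with $\nabla v_n \to \nabla v$ strongly in $L^2$: fix a cutoff $\varphi_\eta \in C_c^\infty(\R^2)$ equal to $1$ on the $\eta$-neighborhood of $K$ and supported in its $2\eta$-neighborhood, then set
$$v_n^\eta := (1-\varphi_\eta) v + \varphi_\eta g_n.$$
Since $K_n \subset \{\varphi_\eta = 1\}$ eventually by $K_n \hacca K$, $v_n^\eta$ is admissible on $\Omega \setminus K_n$; letting $\eta \to 0$ together with $n \to \infty$ along a diagonal subsequence, and exploiting $\huno(K) < +\infty$ with $|K| = 0$, $g_n \to g$ in $H^1$, and $\nabla v \in L^2$, yields $\nabla v_n \to \nabla v$ in $L^2$. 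Minimality of $u_n$ gives $\int|\nabla u_n|^2 \leq \int|\nabla v_n|^2$, and passing to the limit via weak lower semicontinuity on the left and strong convergence on the right produces $\int|\xi|^2 \leq \int|\nabla v|^2$ for every admissible $v$. Hence $u_\infty$ is a minimizer, and by strict convexity of the Dirichlet integral (which pins down gradients uniquely) we conclude $\xi = \nabla u$ a.e.

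Finally, to upgrade to strong convergence I would apply the recovery-sequence inequality with $v = u$ itself: this yields $\limsup_n \int|\nabla u_n|^2 \leq \int|\nabla u|^2$, which combined with the weak lower semicontinuity $\int|\nabla u|^2 \leq \liminf_n \int|\nabla u_n|^2$ gives $\int|\nabla u_n|^2 \to \int|\nabla u|^2$. Together with $\nabla u_n \weakto \nabla u$, this yields strong convergence in the Hilbert space $L^2(\Omega;\R^2)$, and since the limit is independent of the extracted subsequence the whole sequence converges. I expect the main technical hurdle to be the recovery-sequence construction: producing $v_n \in \Adm{g_n}{K_n}$ out of $v \in \Adm{g}{K}$ requires coordinating the cutoff near the moving set $K_n$ with the Dirichlet trace condition on the moving portion $\partial_D\Omega \setminus K_n$. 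Making this uniformly controllable is delicate because $K_n$ and $K$ may touch $\partial_D\Omega$; the fact that $\Omega$ is Lipschitz, $\partial_D\Omega$ has finitely many components, and $K \in \tKmf$ (so $K$ has a uniformly bounded number of connected components) is precisely what prevents pathological accumulation and makes both the construction and the diagonal argument feasible.
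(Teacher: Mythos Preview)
The paper does not prove this proposition; it is quoted from \cite[Thm.~5.1]{DMToa02}. Your overall architecture---uniform energy bound, weak compactness of the zero-extended gradients, identification of the weak limit as the gradient of an admissible minimizer, and upgrade to strong convergence by matching the energies---is sound and is indeed the skeleton of the argument there. The genuine gap is in the recovery-sequence step.

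The cutoff competitor $v_n^\eta = (1-\varphi_\eta)v + \varphi_\eta g_n$ has a gradient containing the term $\nabla\varphi_\eta\,(g_n - v)$. Since $\|\nabla\varphi_\eta\|_\infty \sim \eta^{-1}$ while the transition annulus $A_\eta = \{\eta \le \dist{x}{K} \le 2\eta\}$ has Lebesgue measure only of order $\eta\,\huno(K)$, one gets $\int_{A_\eta}|\nabla\varphi_\eta|^2|v-g_n|^2\,\dd x$ of order $\eta^{-1}$ unless $v-g_n$ decays near $K$; but $K$ is one-dimensional and hence of zero $H^1$-capacity in the plane, so $v\in L^{1,2}(\Omega\setminus K)$ carries no trace on $K$ and no Hardy-type smallness is available. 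The diagonal limit $\eta\to 0$ therefore does not deliver $\nabla v_n\to\nabla v$ in $L^2$. (A secondary defect---away from $K$ your $v_n^\eta$ has trace $g$ rather than $g_n$ on $\partial_D\Omega$---is easily repaired by writing $(1-\varphi_\eta)(v-g)+g_n$, but the main obstruction remains.) The proof in \cite{DMToa02} bypasses this via a two-dimensional duality: the harmonic conjugate of each minimizer solves a \emph{Dirichlet} problem on $\Omega\setminus K_n$ with data prescribed on $K_n$, and for such problems stability under Hausdorff convergence within $\tKmf$ follows from the \v{S}ver\'ak/Bucur--Varchon Mosco-convergence theory, which is precisely where the uniform bounds on $\huno(K_n)$ and on the number of connected components enter. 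Transferring back through the conjugate yields $\nabla u_n\to\nabla u$ strongly in $L^2$.
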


\section{Setup for visco-energetic solutions for brittle fracture}
\label{s:3}
In this section
 we precisely define the 
\begin{enumerate}
\item   driving \emph{energy functional} $\calE$ (cf.\ \eqref{energy}), 
\item  \emph{dissipation quasi-distance} $\mathsf{d}$ (cf.\ \eqref{dissipation-quasidistance}),
\item  \emph{viscous correction} $\delta$ (cf.\ \eqref{delta}). 
\end{enumerate}
intervening in our notion of visco-energetic evolution of brittle fracture.
 Upon introducing $\calE$, $\mathsf{d}$, and $\delta$, 
 we will also 
settle some of their basic properties;  in particular, those 
underlying the definition of $\VE$ solution.
Further properties will be investigated in Section \ref{s:5} ahead, when carrying out the proof of our existence result Theorem \ref{thm:existVEcrack}.

\subsection*{The energy functional}
Throughout the paper $g \in \mathrm{C}^1([0,T];H^1(\Omega)) $ is a fixed function, whose trace on $\partial_D\Omega$ plays the role of a  time-dependent Dirichlet loading acting on $\partial_D\Omega$. 
The  energy functional $\calE\colon [0,T]\times \Xamb \to [0,+\infty)$ is defined by 
\begin{equation}
\label{energy}
\calE(t,K) :=  
\min_{u\in \Adm {g(t)}K}  \int_{\Omega{\setminus}K} \tfrac12  |\nabla u|^2 \dd x,
\end{equation}
where the space of admissible displacements  is given by \eqref{def-admis-u}. 
As we will see in Proposition \ref{l:ad-A} ahead, $\calE$ is  lower semicontinuous on $[0,T]\times \Xamb$, w.r.t.\ to the product topology
$\htop_\R$ on $[0,T]\times\Xamb$, along sequences with bounded $\mathsf{d}$-distance  from some reference set $K_o\in\Kmf$. A straightforward calculation
shows that  the power functional $\partial_t \calE(t,K)$ exists for every $t \in (0,T)$ and  all $K \in \Xamb$ and that 
\begin{equation}
\label{power}
\partial_t \calE(t,K): = \int_{\Omega{\setminus}K} \nabla \dot{g}(t) \cdot  \nabla u(t) \dd x,
\end{equation} 
 where $\dot{g}(t) \in H^1(\Omega)$ is the 
time derivative of the function $g$ 
and $u(t)\in \Adm {g(t)}K$ is a solution of the minimum problem \eqref{energy};  
the formula for $ \partial_t \calE(t,K)$ is well given since 
$\nabla u(t)$ does not depend on the choice of the minimizer, 
  cf.\ Section \ref{s:2}. 
The upcoming  Proposition \ref{l:ad-A}  will collect all  properties of $\calE$ and $\partial_t \calE$ that are  relevant for our analysis. \EEE
\subsection*{The dissipation quasi-distance}
 Preliminarily, let us introduce a quasi-distance between two sets $H$ and $K$ that keeps track of the (number of) connected components of $K$ disjoint from $H$. 
 More precisely,  $\icostname \colon  \K \times \K \to [0,+\infty] $ is defined in this way:  \REVIS
\begin{equation}
\label{icost}
\begin{aligned}
&
\text{if } H \subset K  &&  \icost{H}{K}: = &&
\hspace{-0.3cm}
\GGG \text{number of the connected components of $K$ that are disjoint from
  $H$,}
\\
& \text{if } H \not\subset K && \icost{H}K: = &&  \hspace{-0.3cm}   +\infty\,.
\end{aligned}
\end{equation} \EEE
\GGG Notice that if $H=\emptyset$ then
$\icost{H}{K}$ is simply the number of the connected components of $K$.
\par
\GGG We say that a function $\beta:\K\times \K\to[0,+\infty]$ fulfills
the triangle inequality if
\begin{equation}
  \label{triangle-ineq}
  \beta( H,L) \leq \beta(H,K)+\beta(K,L)
  \qquad \text{for all } H, \, K,\, L \in \K.
\end{equation}
\nc
Our first result shows that $\icostname$ 
satisfies the triangle inequality.
\begin{lemma}
\label{l:alpha-triangle}
The function $\icostname\colon  \K\times \K \to
[0,+\infty]$
fulfills the triangle inequality \eqref{triangle-ineq}. 
\end{lemma}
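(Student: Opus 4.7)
The plan is to reduce to the case $H\subset K\subset L$ and then partition the connected components of $L$ disjoint from $H$ into two classes, each of which is bounded by one term on the right-hand side of \eqref{triangle-ineq}.

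First I would dispose of the trivial case: if $H\not\subset K$ or $K\not\subset L$, then one of the two terms on the right-hand side equals $+\infty$, and the inequality holds vacuously. So from now on assume $H\subset K\subset L$, which in particular gives $H\subset L$, so $\alpha(H,L)$ is defined as a count of connected components.

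The heart of the argument is the following classification. Let $\mathcal{C}$ denote the family of connected components of $L$ disjoint from $H$, so $\alpha(H,L)=\#\mathcal{C}$. Split $\mathcal{C}=\mathcal{C}_1\sqcup\mathcal{C}_2$, where $\mathcal{C}_1$ consists of those $C\in\mathcal{C}$ disjoint from $K$, and $\mathcal{C}_2$ of those $C\in\mathcal{C}$ with $C\cap K\neq\emptyset$. Clearly $\#\mathcal{C}_1\le \alpha(K,L)$, since each $C\in\mathcal{C}_1$ is itself a connected component of $L$ disjoint from $K$. For $\#\mathcal{C}_2$, I would use the following observation: if $C\in\mathcal{C}_2$ and $D$ is any connected component of $K$ meeting $C$, then $D$ is a connected subset of $L$ meeting $C$, so $D\subset C$ by maximality of $C$ as a component of $L$. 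In particular $D$ is disjoint from $H$ (since $C$ is), so $D$ contributes to $\alpha(H,K)$. Choosing one such $D=\Phi(C)$ for each $C\in\mathcal{C}_2$ defines a map $\Phi\colon\mathcal{C}_2\to\{\text{components of }K\text{ disjoint from }H\}$, which is injective because distinct $C,C'\in\mathcal{C}_2$ are disjoint and so cannot both contain the same component of $K$. Hence $\#\mathcal{C}_2\le \alpha(H,K)$.

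Combining, $\alpha(H,L)=\#\mathcal{C}_1+\#\mathcal{C}_2\le \alpha(K,L)+\alpha(H,K)$, which is \eqref{triangle-ineq}. I do not foresee a genuine obstacle: the only subtle point is the containment $D\subset C$, but this is immediate from connectedness of $D$ inside $L$ and the definition of $C$ as a connected component of $L$. No measurability, compactness, or $\mathcal{H}^1$-information is needed; the statement is purely combinatorial about components.
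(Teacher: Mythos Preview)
Your proof is correct and follows essentially the same approach as the paper: both split the connected components of $L$ disjoint from $H$ into those disjoint from $K$ (counted by $\alpha(K,L)$) and those meeting $K$, and for the latter pick a component of $K$ inside each to obtain an injection into the components of $K$ disjoint from $H$. Your presentation via the explicit map $\Phi$ and the clean partition $\mathcal{C}=\mathcal{C}_1\sqcup\mathcal{C}_2$ is in fact slightly tidier than the paper's, which argues with an arbitrary integer $j\le\alpha(H,L)$ and a particular enumeration of components, but the underlying combinatorics is identical.
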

\begin{proof}
  It is enough to show \eqref{triangle-ineq}
  \GGG for $\alpha$  \nc
  in the case in which $\icost HK<+\infty$ and $\icost KL<+\infty$
 so that, in particular, $H\subset K\subset L$. 
 Hence, a subfamily of the connected components of $L$ which are disjoint from $H$ consists of connected components of $L$ which are disjoint from $K$. Let $n=  \icost KL$ and suppose that $L$ has at least $j$ connected components,  $L_1, \ldots, L_j$, disjoint from $H$ and that the connected components of $L$ disjoint from
$K$ coincide with  the sets $ L_{j-n+1}, \ldots, L_j$. 
We now have to prove that $\icost HK \geq  j-n$. 
For this, it suffices to consider the connected components
$\{ L_1, \ldots, L_{j-n}\}$ intersecting $K$. For every $\ell \in \{1, \ldots, j-n\}$ we have that $L_\ell $ intersects at least a connected component $K_\ell $ of $K$; since $K\subset L$,  we ultimately have that
$K_\ell \subset L_\ell$. Since $L_\ell \cap H = \emptyset$, also $K_\ell \cap H = \emptyset$. Therefore, each connected component $K_\ell$, $\ell \in\{1, \ldots, j-n\} $, contributes to the number of connected components of $K$
disjoint from $H$, which yields that $\icost HK \geq j -n$. 
Since this holds for every $j\leq \icost HL$, we obtain \eqref{triangle-ineq}.
 \end{proof}
\noindent
Secondly, we prove that 
$\icostname$ is lower semicontinuous w.r.t.\ Hausdorff convergence.
\begin{lemma}
\label{l:icost-1}
For all sequences $(K_n)_n, \, (H_n)_n \subset \K$ we have that
\begin{equation}
\label{lsc-Hausd}
\left( K_n\wsigma K, \ H_n \wsigma H \right) \ \Rightarrow \ \icost{H}{K} \leq \liminf_{n\to\infty} \icost{H_n}{K_n}\,.
\end{equation}
\end{lemma}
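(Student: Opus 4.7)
The plan is to reduce to a finite liminf, then argue by contradiction using a separation argument based on the clopen characterization of connected components in compact metric spaces.

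First I would discard the trivial case and normalize: if $L:=\liminf_n \icost{H_n}{K_n}=+\infty$ there is nothing to prove, so assume $L<\infty$, extract a subsequence along which $\icost{H_n}{K_n}\to L$, and observe that since $\alpha$ takes integer values we may assume $\icost{H_n}{K_n}=L$ for every $n$. In particular $H_n\subset K_n$ for all $n$, and passing this inclusion to the Hausdorff limit (approximating each $x\in H$ by a sequence $x_n\in H_n\subset K_n$) gives $H\subset K$, so $\icost{H}{K}$ is at worst a finite integer.

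Then I would argue by contradiction, assuming $\icost{H}{K}\geq L+1$, and fixing $L{+}1$ distinct connected components $C_1,\dots,C_{L+1}$ of $K$, each disjoint from $H$. The core of the proof is the construction of pairwise disjoint open sets $U_0,U_1,\dots,U_{L+1}\subset \R^2$ with $H\subset U_0$, $C_i\subset U_i$ for $i\geq 1$, and $K\subset \bigcup_{i=0}^{L+1}U_i$. To do this I would first pick pairwise disjoint open neighborhoods $O_0,O_1,\dots,O_{L+1}$ of the disjoint compact sets $H,C_1,\dots,C_{L+1}$ in $\R^2$ (possible by normality). Then, using the standard fact that in a compact metric space every connected component $C$ is the intersection of the clopen subsets containing it, one can find for each $i\geq 1$ a clopen subset $W_i$ of $K$ satisfying $C_i\subset W_i\subset O_i\cap K$. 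Setting $W_0:=K\setminus(W_1\cup\cdots\cup W_{L+1})$, which is clopen in $K$ and contains $H$, the sets $W_0,\dots,W_{L+1}$ form a finite partition of $K$ into pairwise disjoint compact subsets of $\R^2$, hence of mutual positive distance; thickening each $W_i$ by a sufficiently small tubular open neighborhood in $\R^2$ and intersecting with $O_i$ yields the desired $U_i$, with the additional property $U_i\cap K=W_i$.

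Finally I would invoke the Hausdorff convergences $K_n\stackrel{\htop}{\to}K$ and $H_n\stackrel{\htop}{\to}H$: since $K\subset \bigcup_i U_i$ (an open set), we have $K_n\subset \bigcup_i U_i$ for $n$ large, and similarly $H_n\subset U_0$. Because the $U_i$ are pairwise disjoint open sets, each connected component of $K_n$ is contained in exactly one $U_i$. Moreover, for every $i\geq 1$, choosing any point of $C_i\subset K$ and approximating it by points of $K_n$ shows $K_n\cap U_i\neq\emptyset$ for $n$ large, so $K_n$ has at least one connected component in each of $U_1,\dots,U_{L+1}$. These components are pairwise distinct and all disjoint from $H_n\subset U_0$, yielding $\icost{H_n}{K_n}\geq L+1$, a contradiction.

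The only delicate point is the construction in the middle paragraph: the fact that finitely many disjoint connected components of a compact set in $\R^2$ can be simultaneously enclosed in pairwise disjoint open subsets of $\R^2$. This is where the clopen characterization of components in compact metric spaces is essential, since individually $K$ may have infinitely many components and the separation cannot be achieved by metric distance alone within $K$. Once this topological separation is available, Hausdorff convergence does the rest in a completely routine way.
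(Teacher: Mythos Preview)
Your proof is correct and takes a genuinely different route from the paper's. The paper argues constructively: for each of the $h$ components $K^\ell$ of $K$ disjoint from $H$ it picks a point $x_\ell\in K^\ell$, approximates it by $x_n^\ell\in K_n$, lets $K_n^\ell$ be the component of $K_n$ containing $x_n^\ell$, and uses Blaschke selection to pass to a connected Hausdorff limit $\widehat K^\ell\subset K^\ell$; one then checks that eventually $K_n^\ell\cap H_n=\emptyset$ and (implicitly, via the disjointness of the limits) that the $K_n^\ell$ are pairwise distinct. Your approach instead separates $H,C_1,\dots,C_{L+1}$ inside $K$ by clopen sets and thickens to disjoint open boxes $U_i$ in $\R^2$, which cleanly forces each component of $K_n$ into a single box without any subsequence extraction or distinctness argument. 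The price is that you invoke the fact that components equal quasi-components in compact metric spaces, whereas the paper stays with bare Hausdorff-limit reasoning; what you gain is a shorter endgame. One cosmetic slip: in the thickening step, ``intersecting with $O_i$'' is only harmless for $i\geq 1$ (where $W_i\subset O_i$); for $i=0$ the set $W_0=K\setminus(W_1\cup\cdots\cup W_{L+1})$ need not lie in $O_0$, but the intersection is unnecessary anyway---taking each $U_i$ to be an $\varepsilon$-neighborhood of $W_i$ with $\varepsilon$ small enough already gives pairwise disjointness, $H\subset W_0\subset U_0$, and $K\subset\bigcup_i U_i$.
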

\begin{proof}
Preliminarily, we prove the following 
\\
{\em \textbf{Claim}: for every connected component $K^\ell $ of $K$ and every $x \in K^\ell $ there exists a sequence $(K_n^\ell)_n$ such that $K_n^\ell $ is a connected component of $K_n$ for every $n\in \N$ and $K_n^\ell \wsigma \widehat{K}^\ell$ as $n\to \infty$ for some connected  set  $\widehat{K}^\ell \in \K$ such that 
$x \in \widehat{K}^\ell \subset K^\ell$.}
\\
Indeed, since $K_n\wsigma K$, there exists a sequence $(x_n)_n$ such that $x_n\to x$ and $x_n \in K_n$ for every $n\in \N$. Let $\widehat{K}_n^\ell$ be the connected component of $K_n$ containing $x_n$. By the Blaschke Selection and the Go{\l}\c{a}b Theorems,   up to a (not relabeled) subsequence, the sets $(\widehat{K}_n^\ell)_n$ converge to a connected set $\widehat{K}^\ell$, which clearly contains $x$. Thus, 
$\widehat{K}^\ell \subset K^\ell$. 
\par
We are now in a position to prove \eqref{lsc-Hausd}. Indeed, suppose that there are $h$ connected components $K^1, \ldots, K^h$ of $K$ disjoint from $H$. For each $\ell \in \{1,\ldots, h\}$, select a point $x_\ell \in K^\ell$
and consider the  connected sets $(K_n^\ell)_n$ and $\widehat{K}^\ell$ whose existence is ensured by the previously proved claim. Then, 
\[
\forall\, \ell \in \{1, \ldots, h \} \quad \exists\, \bar{n}_\ell \in \N \, \ \forall\, n \geq  \bar{n}_\ell \, : \quad K_n^\ell \cap H_n = \emptyset
\]
(otherwise, we would have $\widehat{K}^\ell \cap H \neq \emptyset$, hence $K^\ell \cap H \neq \emptyset$).  
Thus, setting  $\bar{n} := \max_{\ell \in \{ 1,\ldots, h\}}   \bar{n}_\ell $,
 we have 
 \[
 \icost{H_n}{K_n} \geq h \qquad \text{for all } n \geq \bar{n},
 \]
and \eqref{lsc-Hausd} follows. 
\end{proof}
\noindent
\par
For a given $\lambda>0$, we are now in a position to  define the 
 (asymmetric) \emph{dissipation quasi-distance} $\mathsf{d}$ 
  by 
\begin{equation}
\label{dissipation-quasidistance}
 \mathsf{d}: 
\K \times \K 
\to [0,+\infty], \qquad \mathsf{d}(H,K): = 
 \huno(K{\setminus}H)   + \lambda\,\icost{H}{K}  \,.
\end{equation} 
\begin{remark}
\label{rmk:alpha-contribution}\
\upshape
The contribution $\lambda\,\icostname$ to $\mathsf{d}$ will have the role of controlling 
 the  growth  of  the number of connected components  of the   visco-energetic   fracture evolution $[0,T]\ni t\mapsto K(t) $.
It is  exploiting this term  that we may prove the lower semicontinuity of $\mathsf{d}$ w.r.t.\ to  Hausdorff convergence, in fact extending the 
Go\l \c ab Theorem, cf.\ Proposition \ref{lsc-of-d} ahead. The constant $\lambda$ can be interpreted as the nucleation cost of each new connected component of the crack set. 
\end{remark}

\noindent 
Obviously, 
\begin{subequations}
\label{separation-properties}
\begin{equation}
\label{obvious-separation-property}
\mathsf{d}(K,K)=0 \qquad \text{for every } K \in  \K. 
\end{equation}
On the other hand, $\mathsf{d}$ separates the points of   $\K$,  namely for every  $H,\, K \in \K$ 
\begin{equation}
\label{true-separation}
 \text{$\mathsf{d}(H,K) =0$ implies $H=K$.}
\end{equation}
\end{subequations}
Indeed,  $\mathsf{d}(H,K) =0$ implies 
\REVISAGAIN that $H\subset K$, $\huno(K{\setminus}H) =0$, and 
that all the connected
components of $K$ have non-empty intersection with $H$.
  \REVIS If there exists $x_0 \in K {\setminus} H$, then there exists a
   ball $B(x_0,\rho)$ disjoint from $H$. 
   Let  $\widehat{K}$  be  the connected component of $K$ containing $x_0$; since $\widehat{K} \cap H \neq \emptyset$, $\widehat{K}$  must also contain a point in $  \partial B(x_0,\rho)$, hence     
  $\huno(K{\setminus}H) \geq \huno(\widehat{K}  {\cap} B(x_0,\rho))
  \geq \rho  >0$, \REVISAGAIN in contradiction with   $\mathsf{d}(H,K) =0$. \EEE
  \par
  As an immediate consequence of Lemma~\ref{l:alpha-triangle}
  \GGG and of
  the definition of $\mathsf d$ \nc
  we have:
  \GGG 
 \begin{proposition}
   \label{prop:properties-of-d}
The function $\mathsf{d}\colon  \mathcal{K}(\overline\Omega)\times  \mathcal{K}(\overline\Omega) \to [0,+\infty]$
satisfies the triangle inequality \eqref{triangle-ineq}
and $\Kf=
\GGG\big\{K\in \K:\mathsf d(\emptyset,K)<\infty\big\}$.
\end{proposition}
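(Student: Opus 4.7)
The proposition decomposes into two independent assertions, both of which follow essentially by unpacking the definition of $\mathsf{d}$ together with Lemma \ref{l:alpha-triangle}. The plan is to treat each in turn and exploit the fact that the hard structural fact (triangle inequality for $\icostname$) is already in hand.

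For the triangle inequality $\mathsf{d}(H,L)\leq \mathsf{d}(H,K)+\mathsf{d}(K,L)$, I would first dispense with the trivial case in which the right-hand side equals $+\infty$. Otherwise both $\mathsf{d}(H,K)$ and $\mathsf{d}(K,L)$ are finite, so by definition of $\icostname$ we have $H\subset K\subset L$; in particular $H\subset L$ and the left-hand side is finite as well. The key observation is then the disjoint-union decomposition $L\setminus H=(L\setminus K)\sqcup(K\setminus H)$, which holds precisely because $H\subset K$. This yields the additive identity $\huno(L\setminus H)=\huno(L\setminus K)+\huno(K\setminus H)$ for the Hausdorff-measure contribution. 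Combining this identity with the triangle inequality $\icost{H}{L}\leq \icost{H}{K}+\icost{K}{L}$ from Lemma \ref{l:alpha-triangle} (multiplied by $\lambda$) gives the desired estimate.

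For the characterization $\Kf=\{K\in \K:\mathsf{d}(\emptyset,K)<\infty\}$, I would simply evaluate $\mathsf{d}(\emptyset,K)$ using the definition. Since $\emptyset\subset K$ trivially, and since every connected component of $K$ is disjoint from $\emptyset$, the quantity $\icost{\emptyset}{K}$ equals the total number of connected components of $K$. Therefore
\[
\mathsf{d}(\emptyset,K)=\huno(K)+\lambda\,\icost{\emptyset}{K},
\]
which is finite if and only if both $\huno(K)<\infty$ and $K$ has finitely many connected components. This is precisely the definition of $\Kf$ recalled in \eqref{ambient-space-intro}.

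No serious obstacle is expected: the only nontrivial ingredient is the triangle inequality for $\icostname$, which is Lemma \ref{l:alpha-triangle}, and the only minor subtlety is the chain of set inclusions that justifies the disjointness of $L\setminus K$ and $K\setminus H$, thereby converting $\huno(L\setminus H)$ into a sum rather than a mere inequality.
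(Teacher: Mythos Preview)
Your proposal is correct and follows exactly the approach the paper indicates: the paper states the proposition as ``an immediate consequence of Lemma~\ref{l:alpha-triangle} and of the definition of $\mathsf d$'' without further detail, and your argument simply spells out that deduction---the disjoint decomposition $L\setminus H=(L\setminus K)\sqcup(K\setminus H)$ for the $\huno$-term, Lemma~\ref{l:alpha-triangle} for the $\icostname$-term, and the explicit evaluation of $\mathsf{d}(\emptyset,K)$ for the characterization of $\Kf$.
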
 
\noindent
It is then simple to prove that if
\GGG
$H\in \Kf$ 
and $K \in \K$ 
fulfills $\mathsf d( H,K)<+\infty$, then $K\in \Kf$ as well.
\nc
\begin{lemma}
\label{l:key}
Let $H \in \Khf$ for some $h\geq 1$, and let $K \in \K$ contain $H$
and fulfill $\icost HK=i <+\infty$
\GGG and $\huno(K\setminus H)<\infty$.
\nc
Then,
\begin{equation}
\label{bound-on-connected-components}
K \in \Kmf \qquad \text{with } m=  h+ i\,.
\end{equation}
\end{lemma}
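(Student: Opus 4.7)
The plan is to argue purely combinatorially, splitting the connected components of $K$ into those disjoint from $H$ and those meeting $H$, and bounding each family separately. First I would observe that the condition $\huno(K)<+\infty$ required for membership in $\Kmf$ is immediate: since $H \in \Khf$, we have $\huno(H)<+\infty$, and combining this with the hypothesis $\huno(K\setminus H)<+\infty$ gives $\huno(K) = \huno(H)+\huno(K\setminus H)<+\infty$. Compactness of $K$ is part of the hypothesis. Hence the whole content of the lemma reduces to the bound $h+i$ on the number of connected components of $K$.

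By the very definition \eqref{icost} of $\icostname$, exactly $i$ connected components of $K$ are disjoint from $H$, so it suffices to prove that the number of connected components of $K$ that meet $H$ is at most $h$. The key observation is that every connected component $H^\ell$ of $H$ is contained in a \emph{unique} connected component of $K$: indeed, $H^\ell$ is connected and $H^\ell \subset H \subset K$, so $H^\ell$ lies in a single component $K^{j(\ell)}$ of $K$. This defines a map $\ell \mapsto j(\ell)$ from the (at most $h$) components of $H$ to the components of $K$.

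I would then show this map \emph{surjects} onto the family of components of $K$ that intersect $H$. Let $K^j$ be a component of $K$ with $K^j\cap H \neq \emptyset$ and pick $x\in K^j\cap H$. The point $x$ belongs to some component $H^\ell$ of $H$; since $H^\ell$ is connected, contained in $K$, and meets $K^j$, it must be entirely contained in $K^j$. Thus $j(\ell)=j$, proving surjectivity. It follows that the number of connected components of $K$ meeting $H$ is bounded by $h$, whence $K$ has at most $h+i$ connected components, i.e.~\eqref{bound-on-connected-components}.

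I do not foresee a genuine obstacle: the argument is essentially a point-set topology bookkeeping exercise on connected components, relying only on the elementary fact that a connected subset of $K$ is contained in a single connected component of $K$. The only subtlety worth flagging is that one should not try to establish a bijection between components of $H$ and components of $K$ meeting $H$ (the map $\ell\mapsto j(\ell)$ need not be injective, since several components of $H$ can merge inside a single component of $K$), but only surjectivity, which is exactly what the bound $h$ requires.
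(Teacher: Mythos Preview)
Your argument is correct. The paper, however, takes a shorter route: it simply invokes the triangle inequality for $\icostname$ (Lemma~\ref{l:alpha-triangle}) with the triple $\emptyset\subset H\subset K$, obtaining
\[
\icost{\emptyset}{K}\leq \icost{\emptyset}{H}+\icost{H}{K}\leq h+i,
\]
which immediately bounds the number of connected components of $K$ (recall that $\icost{\emptyset}{K}$ is precisely that number). Your surjection argument is effectively a direct re-proof of this special case of the triangle inequality, without citing the lemma; it is more self-contained but less economical given that Lemma~\ref{l:alpha-triangle} is already available. The paper also omits the verification that $\huno(K)<+\infty$, which you handle explicitly and which is indeed trivial.
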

\begin{proof}
\REVIS The bound \eqref{bound-on-connected-components} on the number of connected components of $K$ follows from  the triangle inequality 
\[
\icost{\emptyset}K \leq \icost{\emptyset}H + \icost HK = h + i\,. \qedhere
\] \EEE
\end{proof}
\nc
\noindent
The lower semicontinuity of $\mathsf{d}$ w.r.t.\ 
Hausdorff convergence will be a consequence of  the following result, which in fact  
\REVIS extends the (generalized) version 
of  the Go\l \c ab Theorem proved in Theorem\ \ref{Golab2}. Let us indeed emphasize that, for the localized inequality 
\eqref{2showG***},  we no longer require an a priori bound on the number of connected components of the sets $(K_n)_n$, but only that $\sup_n \icost{H_n}{K_n}<+\infty$.  \EEE
\begin{proposition}
\label{lsc-of-d}
Let $(H_n, K_n)_n \subset \K \times \K$ be a sequence such that $H_n \wsigma H$ and $ K_n \wsigma K$. Suppose that    
the number of connected components of $K_n$ disjoint from $H_n$ is uniformly bounded w.r.t.\ $n\in \N$. 
Then,
\begin{equation}
\label{2showG***}
\huno((K{\setminus}H)\cap U) \leq \liminf_{n\to\infty} \huno((K_n{\setminus}H_n)\cap U).
\end{equation}
for every open set $U\subset\Omega$.
\end{proposition}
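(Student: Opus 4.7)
The plan is to split $K_n$ according to which connected components meet $H_n$ and which do not, and to treat each piece with a different tool. Without loss of generality I assume $\liminf_n \huno((K_n \setminus H_n) \cap U) < +\infty$ (else the inequality is trivial), and, after passing to a subsequence realizing the $\liminf$, that $\huno((K_n \setminus H_n) \cap U) \leq C$ uniformly in $n$.

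Let $m$ be a uniform bound on the number of connected components of $K_n$ disjoint from $H_n$. Write $K_n = A_n \sqcup B_n$, where $B_n$ is the union of those (at most $m$) components and $A_n$ is the union of the remaining components (each of which meets $H_n$). Since $B_n \subset K_n \setminus H_n$ and $B_n\cap H_n=\emptyset$, one has $\huno(B_n \cap U) \leq C$, so $B_n \in \Kmf$. By Blaschke's theorem (Theorem~\ref{thm:compactness}) and Theorem~\ref{Golab2}(i), up to a further subsequence $B_n \wsigma B \in \Kmf$ and $\overline{A_n} \wsigma A$; since Hausdorff convergence commutes with finite unions, $K = A \cup B$. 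I then decompose
\[
(K \setminus H) \cap U = E_1 \sqcup E_2, \qquad E_2 := (B \setminus H) \cap U, \qquad E_1 := \bigl((A \cap U) \setminus H\bigr) \setminus B.
\]

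For $E_2$, the localized Go\l\c ab theorem (Theorem~\ref{Golab2}(i)) applied to $B_n$, together with $B_n \cap H_n = \emptyset$, yields
\[
\huno(E_2) \leq \huno(B \cap U) \leq \liminf_{n \to \infty} \huno(B_n \cap U).
\]
For $E_1$ I argue by a pointwise density estimate. Fix $x \in E_1$: since $H \cup B$ is compact and $x \in U \setminus (H \cup B)$, there is $r_0 = r_0(x) > 0$ with $\overline B(x, 2r_0) \subset U \setminus (H \cup B)$, and Hausdorff convergence of $H_n,B_n$ forces $\overline B(x, 2r_0) \cap (H_n \cup B_n) = \emptyset$ for $n$ large. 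Since $x \in A$, pick $x_n \in A_n$ with $x_n \to x$; the component $C_n$ of $K_n$ through $x_n$ lies in $A_n$ and meets $H_n$, hence exits $\overline B(x, 2r_0)$. By the boundary-bumping theorem for continua, the component $D_n$ of $C_n \cap \overline B(x, r_0)$ containing $x_n$ reaches $\partial B(x, r_0)$, so $D_n \subset A_n \setminus H_n$ and $\huno(D_n) \geq \mathrm{diam}(D_n) \geq r_0 - |x - x_n|$. A standard Vitali-type covering argument, as in the classical proof of Go\l\c ab's theorem, upgrades these pointwise lower bounds to
\[
\huno(E_1) \leq \liminf_{n \to \infty} \huno\bigl((A_n \setminus H_n) \cap U\bigr).
\]

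Summing the two estimates and using the super-additivity $\liminf(a_n + b_n) \geq \liminf a_n + \liminf b_n$ for nonnegative sequences, together with the disjoint decomposition $(K_n \setminus H_n) \cap U = ((A_n \setminus H_n) \cap U) \sqcup (B_n \cap U)$, delivers the desired inequality. The main obstacle is the Vitali step in the treatment of $E_1$: recovering the sharp constant $1$ from the pointwise density bound requires exploiting that a connected compact set has lower $\huno$-density $1$ at $\huno$-almost every of its points (not merely $1/2$ everywhere). This is the technical heart of Go\l\c ab's original argument, here localized to the open set $U$ and transported to the pair $(A_n, H_n)$.
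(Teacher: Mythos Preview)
Your decomposition $K_n = A_n \sqcup B_n$ into components meeting $H_n$ and components disjoint from $H_n$ is exactly the paper's starting point, and your treatment of the $B_n$ piece via the classical Go\l\c ab theorem matches the paper's as well. The divergence is in how you handle $A_n$.

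The paper does not attempt a direct density/Vitali argument for $E_1$. Instead it fixes open sets $H \subset V' \Subset V$, sets $\eta = \mathrm{dist}(V', \Omega \setminus V)$, and observes that any component $C$ of $A_n$ that exits $V$ must satisfy $\huno(C \setminus V') \geq \eta$ (precisely your boundary-bumping observation). Since $\sum_C \huno(C \setminus V') \leq \huno(K_n \setminus H_n) \leq M$, there are at most $M/\eta$ such components. Thus only \emph{finitely many} components of $A_n$ contribute outside $\overline V$, and one can apply the standard local Go\l\c ab theorem to each of them separately, obtaining $\huno(K \setminus \overline V) \leq \liminf_n \huno(K_n \setminus H_n)$. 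Letting $V \downarrow H$ finishes the proof.

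So the same connectivity estimate you use pointwise is used by the paper to bound the \emph{number} of relevant components, reducing everything to the already-proved Go\l\c ab theorem rather than reproving it. Your route for $E_1$ is in principle feasible, but as you yourself flag, the ``Vitali step'' with sharp constant $1$ is the entire content of Go\l\c ab's theorem and is not carried out; you are essentially reproving Theorem~\ref{Golab2} in a mild generalization. The paper's finiteness trick sidesteps this entirely and is the cleaner path. A minor additional point: your claim $B_n \in \Kmf$ uses $\huno(B_n) < \infty$, but you only controlled $\huno(B_n \cap U)$; this is harmless once you localize, but should be said.
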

\noindent
\REVIS   
First, we will prove  \eqref{2showG***} for $U=\Omega$, cf.\ \eqref{2showG} below. The key idea will be to distinguish between the connected components of the sets $K_n$ that intersect the sets $H_n$, and those that have an empty intersection with them.  We will then be able to apply the known local version of the  Go\l \c ab Theorem in both cases. Secondly, we 
shall point out how the proof of the claim with $U=\Omega$ \EEE can be adapted to yield the localized inequality \eqref{2showG***}. 
\begin{proof} 
Clearly,  passing to a subsequence it is not restrictive to assume that
 there exists $\tilde{k}\in \N$ such that  
 \begin{equation}
 \label{icost-cost}
\text{$K_n$ has   $\tilde{k}$ connected components disjoint from  $H_n$  for all $n\in \N$.}
 \end{equation}
 \smallskip
 We show
 \noindent
 {\sl \bf Claim $1$:} 
\begin{equation}
\label{2showG}
\huno(K{\setminus}H) \leq \liminf_{n\to\infty} \huno(K_n{\setminus}H_n).
\end{equation} \EEE
Clearly, we may suppose that the right-hand side is finite and, up to a further extraction, that 
\begin{equation}
\label{2G}
\lim_{n\to\infty} \huno(K_n{\setminus}H_n) <+\infty\,.
\end{equation} 
For every $n\in \N$, let $\widetilde{\mathcal{C}}_n$ denote the collection of the connected components of $K_n$ that do not intersect $H_n$.  Due to \eqref{icost-cost}, $\widetilde{\mathcal{C}}_n$ has $\tilde{k}$ elements, denoted as $\widetilde{C}_n^1, \ldots, \widetilde{C}_n^{\tilde k}$. Up to a subsequence we may suppose that
\begin{equation}
\label{3G}
 \widetilde{C}_n^{i} \to \widetilde{C}^i \qquad \text{for } i = 1, \ldots, \tilde k,
\end{equation}
for some $ \widetilde{C}^i \in \K$. 
Let us now fix 
two open sets $\uopen$ and $\uopenp$ such that 
$H\subset \uopenp\Subset \uopen$ and let 
 \[
 \eta: = \inf_{x\in \uopenp} \mathrm{dist}(x,\Omega{\setminus}\uopen)>0.
 \]
Since $H_n\wsigma H$ and $H \subset \uopenp$, for $n$ sufficiently large we have $H_n \subset \uopenp$;  for simplicity and without loss of generality, hereafter we shall suppose that $H_n\subset \uopenp$ for every $n\in\N$.
 \par
 Let us now consider the family  $\widehat{\mathcal{C}}_n$  of the connected components $C$ of $K_n$ such that 
 \begin{equation}
 \label{4G}
 C \setminus \uopen \neq \emptyset, \qquad C \cap H_n \neq \emptyset.
 \end{equation}
 We will now show that 
 \begin{equation}
 \label{5G}
 \huno(C{\setminus}\uopenp) \geq \eta \qquad \text{for all } C \in \widehat{\mathcal{C}}_n\,.
 \end{equation}
Indeed, let us consider the $1$-Lipschitz   function $f\colon \Omega \to [0,+\infty)$ defined by  $f(x): = \dist{x}{\uopenp}$. Since $C$ is a connected set, $f(C)$ is an interval. It follows from the second of \eqref{4G} and the fact that $H_n \subset \uopenp$ that $0 \in f(C)$. Furthermore, by the first of \eqref{4G} and since $ \eta= \dist{\uopenp}{\Omega{\setminus}\uopen}$, we also have that $\eta \in f(C)$, so that $[0,\eta ]\subset f(C)$. In particular, 
for every $t\in (0,\eta]$ there exists $x\in C$ such that $f(x) =  d(x,V') = t$, so that $x\in C \setminus \uopenp$. 
Therefore, $(0,\eta] \subset f(C{\setminus}\uopenp)$. Since $f$ is $1$-Lipschitz, we then have 
$\eta \leq \huno(f(C{\setminus}\uopenp)) \leq  \huno(C{\setminus}\uopenp)$, i.e., \eqref{5G}.
\par
Since $H_n \subset \uopenp$, for every $C \in \widehat{\mathcal C}_n$ there holds
$C\setminus \uopenp \subset K_n\setminus H_n$ and therefore
\begin{equation}
\label{6G}
\sum_{C \in \widehat{\mathcal{C}}_n} \huno(C{\setminus}\uopenp) \leq \huno(K_n{\setminus}H_n) \leq M,
\end{equation}
with $M = \sup_n  \huno(K_n{\setminus}H_n)<+\infty$ by \eqref{2G}.
Combining \eqref{5G} and \eqref{6G} we then infer that $ \widehat{\mathcal C}_n$ has at most $\frac M{\eta}$ elements. We may then suppose, up to a subsequence, that $ \widehat{\mathcal C}_n$  consists of exactly $\hat{k}\in \N$ elements $\widehat{C}_n^1, \ldots, \widehat{C}_n^{\hat k}$ for every $n$. There exist compact and connected subsets 
 $\widehat{C}^j \in \K$, $j=1, \ldots, \hat{k}$, such that
$
 \widehat{C}_n^j \wsigma \widehat{C}^j
 $
 as $n\to\infty$; moreover, it follows from \eqref{4G} that 
 \begin{equation}
 \label{7G}
  \widehat{C}^j
\setminus \uopen \neq \emptyset, \qquad   \widehat{C}^j \cap H \neq \emptyset.
 \end{equation}
 We will now prove that
 \begin{equation}
 \label{8G}
 K \setminus \overline{\uopen} \subset \bigcup_{i=1}^{\tilde k} (\widetilde{C}^i{\setminus}\overline \uopen) \cup 
 \bigcup_{j=1}^{\hat k} (\widehat{C}^j{\setminus}\overline \uopen)\,.
 \end{equation}
 Indeed, for every $x\in K \setminus \overline \uopen$ there exists a sequence $(x_n)_n$ such that $x_n\to x$ as $n\to\infty$  and $x_n \in K_n \setminus \overline \uopen$ for sufficiently big $n$. Let $C_n$ be the connected component of $K_n$ containing $x_n$. There exists $C^* \in \Kuno$ such that, up to a subsequence, $C_n \wsigma C^*$, so that $x \in C^*$. 
Now, for every $n\in \N$ we either have $C_n \cap H_n = \emptyset$ or $C_n \cap H_n \neq \emptyset$. In the former case, $C_n \in \widetilde{\mathcal C}_n$. In the latter case, since $x_n \in C_n \setminus \uopen \neq \emptyset$, we have
$C_n \in \widehat{\mathcal C}_n$. If $C_n \in \widetilde{\mathcal{C}}_n = \{ \widetilde{C}_n^1, \ldots, \widetilde{C}_n^{\tilde k} \}$ for infinitely many indexes $n$, there exists $i_0 \in \{1, \ldots, \tilde k\}$ such that 
$C_n = \widetilde{C}_n^{i_0}$ for infinitely many $n$ so  that   $C^*= \widetilde{C}^{i_0}$ and, ultimately, $x\in  \widetilde{C}^{i_0} \subset \cup_{i=1}^{\tilde k}  \widetilde{C}^{i}$. 
If $C_n \in \widehat{\mathcal{C}}_n = \{ \widehat{C}_n^1, \ldots, \widehat{C}_n^{\hat{k}}\}$ for infinitely many indexes, then there exists $j_0 \in \{1, \ldots, \hat{k}\}$ such that 
$C_n = \widehat{C}_n^{j_0}$ for infinitely many $n$, so that  $C^*= \widehat{C}^{j_0}$ and thus
$x\in  \widehat{C}^{j_0} \subset \cup_{j=1}^{\hat{k}}  \widehat{C}^{j}$. We have thus proved \eqref{8G}. 
\par
By the local version of the Go\l \c ab Theorem (cf.\ Theorem \ref{Golab2}{i}), we have 
\begin{equation}
\label{9G}
\begin{aligned}
& \huno(\widetilde{C}^i{\setminus}\overline{\uopen}) \leq \liminf_{n\to\infty} \huno(\widetilde{C}_n^i{\setminus}\overline{\uopen}) \quad \text{for all } i=1, \ldots, \tilde{k} \quad \text{and} \\
& \huno(\widehat{C}^j{\setminus}\overline{\uopen}) \leq \liminf_{n\to\infty} \huno(\widehat{C}_n^j{\setminus}\overline{\uopen}) \quad \text{for all } j=1, \ldots, \hat{k}.
\end{aligned}
\end{equation}
Hence, from \eqref{8G} and \eqref{9G} we deduce that
\begin{equation}
\label{10G}
\begin{aligned}
\huno( K {\setminus} \overline{\uopen} ) \leq \sum_{i=1}^{\tilde k} \huno(\widetilde{C}^i{\setminus}\overline \uopen)
+  \sum_{j=1}^{\hat{k}} \huno(\widehat{C}^j{\setminus}\overline \uopen)
&
\leq
 \sum_{i=1}^{\tilde k} \liminf_{n\to\infty}  \huno(\widetilde{C}_n^i{\setminus}\overline \uopen)
 + \sum_{j=1}^{\hat{k}} \liminf_{n\to\infty} \huno(\widehat{C}_n^j{\setminus}\overline \uopen)
\\
& 
\leq 
 \liminf_{n\to\infty} \left(  \sum_{i=1}^{\tilde k} \huno(\widetilde{C}_n^i{\setminus}\overline \uopen){+}  
\sum_{j=1}^{\hat{k}} \huno(\widehat{C}_n^j{\setminus}\overline \uopen) \right)\,.
\end{aligned}
\end{equation}
Now, for every $n$ the  connected components  $\widetilde{C}_n^1, \ldots, \widetilde{C}_n^{\tilde k}$   are pairwise disjoint, and so are the sets
$\widehat{C}_n^1, \ldots, \widehat{C}_n^{\hat k}$. Furthermore, by the very definition of  $\widetilde{\mathcal{C}}_n$ and $\widehat{\mathcal{C}}_n$  we also have that 
$\widetilde{C}_n^i \cap \widehat{C}_n^j = \emptyset$ for every $i=1,\ldots, \tilde k$ and $j=1,\ldots, \hat k$. Therefore, \eqref{10G} leads to 
\begin{equation}
\label{11G}
\huno( K {\setminus} \overline{\uopen} ) \leq \liminf_{n\to\infty} \huno (K_n{\setminus}\overline{\uopen}) \leq\liminf_{n\to\infty} \huno(K_n{\setminus}H_n),
\end{equation}
where the latter inequality holds due to the fact that $H_n\subset \uopen$ (at least for sufficiently large $n$). 
\par
Finally, let $(\uopen_m)_m$ be a sequence of open sets containing $H$ such that $H = \cap_{m=1}^\infty \overline{\uopen}_m$. It follows from \eqref{11G} that $\huno( K {\setminus} \overline{\uopen}_m )  \leq\liminf_{n\to\infty} \huno(K_n{\setminus}H_n)$ for every $m\in \N$ so that, taking the limit as $m\to\infty$ we ultimately have  
$
\huno(K{\setminus}H) \leq \liminf_{n\to\infty} \huno(K_n{\setminus}H_n),
$
i.e.,\ \eqref{2showG}. 
 \smallskip
 \par
 \noindent
 {\sl \bf Claim $2$:} \emph{the localized inequality  \eqref{2showG***}   holds.} It is sufficient to repeat the arguments up to \eqref{8G}, which can be localized, yielding for every open set $U\subset \Omega$
 \[
  (K {\setminus} \overline{\uopen}) \cap U  \subset \bigcup_{i=1}^{\tilde k} ((\widetilde{C}^i{\setminus}\overline \uopen) {\cap} U) \cup 
 \bigcup_{j=1}^{\tilde k} ((\widehat{C}^j{\setminus}\overline \uopen){\cap}U)\,.
 \]
 Then, by the Go\l \c ab Theorem \ref{Golab2} the analogues  of \eqref{9G} hold for 
 $\huno((\widetilde{C}^i{\setminus}\overline \uopen) {\cap} U) $ and $\huno((\widehat{C}^j{\setminus}\overline \uopen){\cap}U),$ yielding the corresponding estimate for $\huno ( (K {\setminus} \overline{\uopen}) {\cap} U )$
 (cf.\ \eqref{10G}). Hence, the analogue of \eqref{11G} holds, i.e.\
 \[
 \huno ( (K {\setminus} \overline{\uopen}) {\cap} U )  \leq \liminf_{n\to\infty} \huno ((K_n{\setminus}H_n){\cap}U).
 \]
From the above inequality it is then possible to infer \eqref{2showG***} by the very same arguments as in Claim $1$.
\\
\REVIS This finishes the proof of Proposition \ref{lsc-of-d}. \EEE
\end{proof}
Recalling that the quasi-distance  $\alpha$ is lower semicontinuous w.r.t.\ Hausdorff convergence by Lemma \ref{l:icost-1}, we immediately deduce the following result from Proposition~\ref{lsc-of-d}.
\begin{corollary}
\label{lsc-of-d-cor}  
The function $\mathsf{d}\colon  \K \times \K \to [0,+\infty]$ is lower semicontinuous w.r.t.\ the Hausdorff distance.
\end{corollary}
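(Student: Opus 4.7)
The strategy is to combine the two preceding lower-semicontinuity results: Proposition~\ref{lsc-of-d} for the $\huno(K{\setminus}H)$ component of $\mathsf{d}$, and Lemma~\ref{l:icost-1} for the $\alpha$-component. The only delicate point is verifying, along a subsequence realizing the $\liminf$ of $\mathsf{d}(H_n,K_n)$, the hypothesis of Proposition~\ref{lsc-of-d}, namely that the number of connected components of $K_n$ disjoint from $H_n$ is uniformly bounded. This is precisely where the term $\lambda\,\alpha$ inside $\mathsf{d}$ pays off.

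Concretely, given $H_n \wsigma H$ and $K_n \wsigma K$, set $L:=\liminf_{n\to\infty}\mathsf{d}(H_n,K_n)$; I may assume $L<+\infty$, else there is nothing to prove. Extract a (not relabeled) subsequence along which $\mathsf{d}(H_n,K_n)\to L$. For $n$ large, finiteness of $\mathsf{d}(H_n,K_n)$ forces, by the very definition \eqref{icost} of $\alpha$, the inclusion $H_n\subset K_n$, together with the uniform bounds
\[
\alpha(H_n,K_n)\le \frac{\mathsf{d}(H_n,K_n)}{\lambda}, \qquad \huno(K_n{\setminus}H_n)\le \mathsf{d}(H_n,K_n).
\]
Passing to the Hausdorff limit in $H_n\subset K_n$ (every $x\in H$ is the limit of points $x_n\in H_n\subset K_n$, hence belongs to $K$) yields $H\subset K$, so that the finite-value formula $\mathsf{d}(H,K)=\huno(K{\setminus}H)+\lambda\,\alpha(H,K)$ applies.

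The uniform bound on $\alpha(H_n,K_n)$ is exactly the hypothesis of Proposition~\ref{lsc-of-d}, which therefore gives $\huno(K{\setminus}H)\le \liminf_n \huno(K_n{\setminus}H_n)$, while Lemma~\ref{l:icost-1} independently supplies $\alpha(H,K)\le \liminf_n \alpha(H_n,K_n)$. Extracting a further subsequence along which both $\huno(K_n{\setminus}H_n)$ and $\alpha(H_n,K_n)$ converge (both are bounded in $[0,+\infty)$), I can sum the two inequalities to conclude
\[
\mathsf{d}(H,K) \;\le\; \lim_n \huno(K_n{\setminus}H_n) + \lambda\,\lim_n \alpha(H_n,K_n) \;=\; L,
\]
which is the claim. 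I do not anticipate any serious obstacle: the hard analytical content sits in Proposition~\ref{lsc-of-d} (the localized Go\l\c{a}b-type estimate) and Lemma~\ref{l:icost-1}; what remains is purely the bookkeeping of subsequences and the observation that the penalty term $\lambda\,\alpha$ in $\mathsf{d}$ automatically delivers the connected-component bound required by Proposition~\ref{lsc-of-d}.
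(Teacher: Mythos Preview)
Your proof is correct and follows exactly the approach the paper intends: the corollary is stated as an immediate consequence of Proposition~\ref{lsc-of-d} combined with the lower semicontinuity of $\alpha$ from Lemma~\ref{l:icost-1}, and you have simply written out the bookkeeping (subsequence extraction, the observation that finiteness of $\mathsf{d}(H_n,K_n)$ forces $H_n\subset K_n$ and bounds $\alpha(H_n,K_n)$, hence supplies the hypothesis of Proposition~\ref{lsc-of-d}) that the paper leaves implicit.
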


 It follows from \eqref{separation-properties},  Proposition \ref{prop:properties-of-d}, and   Corollary  \ref{lsc-of-d-cor}, 
 that   the quasi-distance  $\mathsf{d}$ on  $\Xamb\times \Xamb$ satisfies  the basic conditions required in  \cite[Sec.\ 2.1]{SavMin16}.  

\subsection*{Curves with bounded variation.} 
\REVISDOUBT For a given curve $ K\colon  [0,T]\to \K$,
\GGG a subset $E \subset [0,T]$, and a function
$\beta:\K\times\K\to[0,+\infty]$ satisfying the triangle inequality
\eqref{triangle-ineq}, we define 
 \begin{align}
\label{tot-var}
&
\Vars{\beta}KE : = \sup \Big\{ \sum_{j=1}^N \beta({K(t_{j-1})},{K(t_j)}) \, : \ t_0\le t_1\le \ldots\le t_{N}, \ t_j \in E \text{ for }j=0,\dots,N \Big\}\,,
%
\end{align} 
with the convention that
$\Vars{\beta}K\emptyset =0$. \nc
As we    shall see in the next section, $\VE$ solutions to the viscously corrected system for brittle fracture
satisfy
\begin{equation}
\label{BV-d-curves}
 \Vari{\mdn}K 0T<+\infty\,.
\end{equation} 
\EEE
 Let us now gain further insight into the properties of curves satisfying \eqref{BV-d-curves}.
\par
First of all, from \eqref{BV-d-curves} it clearly follows that $\mathsf{d}(K(s),K(t))<+\infty$ for every $0\leq s \leq t \leq T$, so that 
\begin{equation}
\label{irreversible}
K(s) \subset K(t) \qquad \text{for all } 0 \leq s \leq t \leq T,
\end{equation}
i.e.,\ the function $K$ is   \emph{increasing} w.r.t.\ set inclusion. 

\par 
The next result shows that
\GGG for crack evolutions with values in $\Kf$ and
satisfying  the monotonicity condition \eqref{irreversible}, 
the left and the right limits of $K$ w.r.t.\
$\mathsf{h}$ exist and 
the $\alpha$-variation is concentrated in the jump set;
when the $\alpha$-variation is finite, then the curve is
\emph{$(\htop,\mathsf{d})$-regulated} in the sense of 
\cite[Definition 2.3]{SavMin16}.
\nc
\begin{lemma}
\label{l:K-sigma-regulated}
Let  $ K \colon   [0,T] \to \K$   satisfy
\GGG \eqref{irreversible}. \nc
For $t\in [0,T]$, set
\begin{equation}
\label{left-right}
\lli K t: = \mathrm{cl}\left( \cup_{s<t} K(s) \right), \qquad \rli Kt: = \cap_{s>t} K(s),
\end{equation}
with the conventions $\lli K 0: = K(0)$ and $\rli KT: = K(T)$. Then, 
\begin{equation}
\label{left/right-lims}
\begin{aligned}
&
K(s) \wsigma \lli Kt \quad \text{ as } s \to t_-
&& \text{for all } t \in (0,T],
\\
&
 K(s) \wsigma \rli Kt \quad \text{ as } s \to t_+ && \text{for all } t \in [0,T).
 \end{aligned}
\end{equation}
 Furthermore, there holds $\lli Kt \subset K(t) \subset \rli Kt$ for all $t\in [0,T]$. Let
 $\Theta: = \{ t\in (0,T)\, : \  \lli Kt = K(t) = \rli Kt\}$. Then, 
 \begin{equation}
 \label{properties-Theta}
 \begin{gathered}
 \text{the set $ \jump K: = [0,T]\setminus \Theta$ is at most countable, and  
$
K(t_n) \wsigma K(t)
$ for every $t\in \Theta$ and every $t_n  \to t$}.
\end{gathered}
\end{equation}
\GGG If in addition $K$ takes values in $\Kf$ then
\begin{equation}
\label{only-jump}
\Vari{\icostname}K0t= \sum_{s \in \jump K {\cap} [0,t[} \left(
  \icost{\llim Ks}{K(s)}{+} \icost{K(s)}{\rlim K s} \right)
+\icost{\llim Kt}{K(t)}
\quad\text{for all } t \in [0,T]\,.
\end{equation}
If $\Vari{\icostname}K0T<\infty$ then
the set
\begin{equation}
  \label{jump-alpha}
  \jump{K,\alpha}:=\Big\{s\in \jump K: \icost{\llim Ks}{K(s)}{+} \icost{K(s)}{\rlim K
    s}>0\Big\}\text{ is finite }
\end{equation}
and
\begin{equation}
\label{def:BVsigma-d}
\begin{aligned}
&
\lim_{s\to t_-} \md{K(s)}{\lli Kt} =\lim_{s\to t_-}  \calH^1(\lli Kt{\setminus} K(s)) =\lim_{s\to t_-}  \icost{K(s)}{\lli Kt} =0 && \text{for all } t \in (0,T],
\\
& 
\lim_{s\to t_+} \md{K(s)}{\rli Kt} =\lim_{s\to t_+}  \calH^1(K(s){\setminus} \rli Kt) =\lim_{s\to t_+}  \icost{\rli Kt)}{K(s)}=0
 && \text{for all } t \in [0,T).
 \end{aligned}
 \end{equation}
\end{lemma}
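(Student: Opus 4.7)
Monotonicity of $K$ (condition \eqref{irreversible}) is used throughout. The set $\lli K t := \mathrm{cl}\big(\cup_{s<t}K(s)\big)$ is compact, satisfies $K(s) \subset \lli K t \subset K(t)$ for every $s<t$, and symmetrically $\rli K t := \cap_{s>t}K(s)$ is compact with $K(t) \subset \rli K t \subset K(s)$ for every $s>t$. For the Hausdorff convergence $K(s) \wsigma \lli K t$ as $s \to t^-$, the inclusion $K(s) \subset \lli K t$ kills one side of the Hausdorff distance; the other side follows since every $y \in \lli K t$ is approximated by some $y' \in K(s')$ for some $s'<t$, monotonicity propagates $y'$ into every $K(s)$ with $s \geq s'$, and compactness of $\lli K t$ upgrades pointwise approximation to uniform. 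For $K(s) \wsigma \rli K t$ as $s \to t^+$, Blaschke selection (Theorem~\ref{thm:compactness}) extracts a subsequential Hausdorff limit $L$; monotonicity and closedness of set inclusions under Hausdorff limits give $L \subset K(s)$ for every $s>t$, so $L \subset \rli K t$, while $\rli K t \subset K(s) \wsigma L$ gives the reverse inclusion.

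\textbf{Step 2 (At-most-countable jump set and continuity on $\Theta$).} Fix a countable dense $\{x_n\} \subset \overline\Omega$ and set $f_n(t) := \dist{x_n}{K(t)}$. Each $f_n$ is monotone non-increasing (since $K$ is inclusion-monotone) and hence has at most countably many discontinuities. The monotone convergence of the sets yields $\lim_{s \to t^-} f_n(s) = \dist{x_n}{\lli K t}$ and $\lim_{s \to t^+} f_n(s) = \dist{x_n}{\rli K t}$, so at a common continuity point of every $f_n$ one has $\dist{x_n}{\lli K t} = \dist{x_n}{\rli K t}$ for all $n$; density of $\{x_n\}$ together with closedness of both compact sets force $\lli K t = \rli K t$, whence $t \in \Theta$. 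Thus $\jump K$ is at most countable. For $t \in \Theta$ and $t_n \to t$, splitting the sequence into left- and right-approaching subsequences and applying Step~1 gives $K(t_n) \wsigma \lli K t = K(t) = \rli K t$.

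\textbf{Step 3 (Variation formula \eqref{only-jump} under $K \in \Kf$).} The decisive lemma is: \emph{if $[a,b] \cap \jump K = \emptyset$ then $\alpha(K(a),K(b)) = 0$.} Fix a component $C$ of $K(b)$ and set $\tau := \inf\{s \in [a,b] : C \cap K(s) \neq \emptyset\}$. By monotonicity the set above is $[\tau,b]$ or $(\tau,b]$. In the open case, the nested family $\{C \cap K(s)\}_{s>\tau}$ consists of nonempty compacts whose intersection equals $C \cap \rli K \tau = C \cap K(\tau) \neq \emptyset$ (since $\tau \notin \jump K$), contradicting $\tau$ being outside the set. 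In the closed case with $\tau > a$, pick $x \in C \cap K(\tau)$; since $K(\tau) \in \Kf$ has finitely many components, $C$ is clopen in $K(\tau)$, so a neighborhood $U$ of $C$ in $\R^2$ satisfies $U \cap K(\tau) \subset C$. The convergence $K(s) \wsigma K(\tau)$ as $s \to \tau^-$ produces $z_s \in K(s)$ with $z_s \to x$; since $z_s \in K(s) \subset K(\tau)$ and $z_s \in U$ for $s$ close to $\tau$, one gets $z_s \in C$ and therefore $C \cap K(s) \neq \emptyset$ for such $s<\tau$, a contradiction. Hence $\tau = a$ and $C \cap K(a) \neq \emptyset$. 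With this in hand, a partition refinement isolating each jump $s \in \jump K \cap [0,t[$, combined with the triangle inequality (Lemma~\ref{l:alpha-triangle}), lower semicontinuity of $\alpha$ (Lemma~\ref{l:icost-1}), and the one-sided convergences from Step~1, shows that the contribution at each interior jump $s$ is exactly $\icost{\lli K s}{K(s)} + \icost{K(s)}{\rli K s}$, while the endpoint $t$ contributes only the left part $\icost{\lli K t}{K(t)}$; taking the supremum over partitions gives \eqref{only-jump}.

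\textbf{Step 4 and main obstacle.} Under $\Vari{\alpha}K0T < \infty$, formula \eqref{only-jump} expresses a finite total as a sum of nonnegative integer summands, so only finitely many are positive and $\jump{K,\alpha}$ is finite. The $\icostname$-parts of \eqref{def:BVsigma-d} then vanish for $s$ in a one-sided neighborhood of $t$ avoiding the finite set $\jump{K,\alpha}$. For the $\huno$-part, the uniform bound $\icost{\emptyset}{K(s)} \leq \icost{\emptyset}{K(0)} + \Vari{\alpha}K0T$ permits applying Theorem~\ref{Golab2} to $K(s) \wsigma \lli K t$, yielding $\huno(\lli K t) \leq \liminf \huno(K(s))$; combined with the reverse bound from $K(s) \subset \lli K t$, this gives $\huno(K(s)) \to \huno(\lli K t)$ and hence $\huno(\lli K t \setminus K(s)) = \huno(\lli K t) - \huno(K(s)) \to 0$ by disjointness; the full $\md{K(s)}{\lli K t} \to 0$ limit follows by summing the two pieces. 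The main obstacle is the variation formula \eqref{only-jump}: since $\alpha$ is not additive on monotone chains (components can merge across intermediate times), the concentration-at-jumps step hinges on the clopen structure of components in $\Kf$ to track the birth of each component, and the partition bookkeeping must be handled with care to avoid double-counting.
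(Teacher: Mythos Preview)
Your Steps 1, 2 and 4 are fine and in places more explicit than the paper's treatment (the paper dispatches the left/right limits in one line and cites \cite[Prop.\ 6.1]{DMToa02} for the countability of $\jump K$; for \eqref{def:BVsigma-d} the paper uses the monotone function $V(s)=\Vari{\mdn}K0s$ and the lower semicontinuity of $\mdn$ rather than your Go\l\c ab-based argument, but both routes work).

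The gap is in Step 3. Your ``decisive lemma'' is
\[
[a,b]\cap\jump K=\emptyset \ \Longrightarrow\ \icost{K(a)}{K(b)}=0,
\]
and your proof of it is correct, but the hypothesis is too strong to yield \eqref{only-jump}. The set $\jump K$ can be countably infinite (even dense in $[0,T]$), so you cannot ``refine a partition to isolate each jump $s\in\jump K\cap[0,t[$'' with a finite partition, and hence you have no control on a general partition sum $\sum_j\icost{K(t_{j-1})}{K(t_j)}$ when the partition intervals contain points of $\jump K\setminus\jump{K,\alpha}$. What the paper proves (and what is actually needed) is the stronger implication
\[
[t_1,t_2]\cap\jump{K,\alpha}=\emptyset \ \Longrightarrow\ \icost{K(t_1)}{K(t_2)}=0,
\]
obtained as follows: assuming the right-hand side of \eqref{only-jump} is finite, $\jump{K,\alpha}$ is a finite set $\{s_1<\cdots<s_J\}$. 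If $t_1<t_2$ lie in one gap $(s_{j-1},s_j)$ and some component $C$ of $K(t_2)$ is disjoint from $K(t_1)$, set $r:=\inf\{t\in[t_1,t_2]:K(t)\cap C\neq\emptyset\}$. Using that $K(t_2)\in\Kf$ one separates $C$ from $C':=K(t_2)\setminus C$ by a neighbourhood; since $K(t)\subset C'$ for $t<r$ one gets $\llim Kr\subset C'$, while $\rlim Kr\cap C\neq\emptyset$ and the component of $\rlim Kr$ meeting $C$ lies entirely in $C$. Hence $\icost{\llim Kr}{\rlim Kr}\geq1$, so $r\in\jump{K,\alpha}$, contradicting $r\in[t_1,t_2]\subset(s_{j-1},s_j)$. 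Your argument tracks the same birth time $\tau$ but then invokes $\tau\notin\jump K$ (to get $K(\tau\pm)=K(\tau)$), which is exactly the hypothesis you cannot secure; the paper instead concludes $\tau\in\jump{K,\alpha}$ directly from the separation of $C$ and $C'$. Once this stronger vanishing statement is available, refining any partition by the finitely many points of $\jump{K,\alpha}$ and using the triangle inequality together with the lower semicontinuity of $\icostname$ (to pass from $K(t_j)$ to $\lli K{s_i}$ and $\rli K{s_i}$) gives the upper bound in \eqref{only-jump}.
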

\begin{proof}
Properties \eqref{left/right-lims} are an immediate consequence of
definitions \eqref{left-right} and of the \EEE  definition of
Hausdorff distance, while \eqref{properties-Theta} has been proved in
\cite[Prop.\ 6.1]{DMToa02}.
\par
\GGG
In order to prove \eqref{only-jump} and
\eqref{jump-alpha} in the case when $K$ takes values in $\Kf$, we introduce 
the functions
 $\Vfz{\icostname}: [0,T]\to [0,+\infty]$ 
and 
$\Vfz{\icostname,\mathrm{jump}}: [0,T]\to [0,+\infty]$  defined by
\[
\Vfz{\icostname}(t): = \Vari{\icostname}K0t \quad\text{ and } \quad
\Vfz{\icostname,\mathrm{jump}}(t): = \sum_{s\in \jump K {\cap} [0,t[} \left( \icost{\llim K s}{K(s)}{+}
\icost{K(s)}{\rlim Ks}\right)+\icost{\llim Kt}{K(t)}\,,
\]
which clearly satisfy $\Vfz{\icostname}\ge
\Vfz{\icostname,\mathrm{jump}}$.
In order to prove the converse inequality, it is not restrictive to
assume $t=T$ and $\Vfz{\icostname,\mathrm{jump}}<+\infty$. Since 
$\alpha $ takes values in $\N$, \eqref{jump-alpha} is immediate, so
that 
we can write $\jump{K,\alpha}=\{s_1,\cdots,s_J\}$ for some $0\le
s_1<s_2<\cdots s_J\le T$ in $\jump K$.
Our thesis follows if we show that for every
interval $I=(s_{j-1},s_j)$, $j\in \{2,\cdots,J\}$, and every
$t_1,t_2\in I$ with $t_1<t_2$ we have $\alpha(K(t_1),K(t_2))=0$.

We argue by contradiction and we assume that there exist $t_1<t_2$ in
some interval $I=(s_{j-1},s_j)$ such that $\alpha(K(t_1),K(t_2))\ge
1$. This means that $K(t_2)$ contains a connected component $C$ such
that
$C\cap K(t_1)=\emptyset$. $C$ is compact and
$C'=K(t_2)\setminus C$ is compact as well, since $K(t_2)$ has a finite number of
connected components. There exists a $\rho>0$ such that $C'\cap
C^\rho=\emptyset$,
where $C^\rho:=\{x\in \overline\Omega:\dist xC<\rho\}$.

We now consider the monotone family of compact sets $C(t):=K(t)\cap C$
and we set $r:=\inf\{t\in [t_1,t_2]:C(t)\neq\emptyset\}$.
If $r\in (t_1,t_2)$, then  it is easy to check that
$\rlim C{r}=\rlim K{r}\cap C\neq\emptyset$ and
$\llim K{r}\cap C=\emptyset$ since $K(t)\subset C'$ for every
$t<r$. This implies that $\llim K{r}\neq \rlim K{r}$
and
$\icost{\llim K{r}}{\rlim K{r}}\ge 1$, hence $r\in \jump{K,\alpha}$. This is  
a contradiction, since, by construction, $r\in [t_1,t_2]\subset[0,T]\setminus  \jump{K,\alpha}$.
If $r=t_2$ we use the same argument by replacing $\rlim C{t_2}$ with
$C(t_2)$;
similarly, if $r=t_1$ we replace $\llim K{t_1}$ with $K(t_1)$.

\GGG If $\Vari{\icostname}K0T$ is finite then also $\Vari{\mathsf d}K0T$ is
finite and 
we can
\nc
now exploit \eqref{BV-d-curves} in order to check 
\eqref{def:BVsigma-d} for $\lli Kt$ (the proof of the assertion for $\rli Kt$ follows the same lines).
For every $s\in [0,T]$, let $V(s): = \mathrm{Var}_{\mathsf{d}}(K, [0,s])$. Since $V$ is monotone increasing and
\eqref{BV-d-curves}
holds, we have that 
$V(t-):=\lim_{s\to t_-}V(s)<+\infty$. For every $0<s<s_1<t$ we have that 
$
\mathsf{d}(K(s), K(s_1))\leq V(s_1)-V(s)$. Passing to the limit as $s_1 \to t_-$ and using the semicontinuity of $\mathsf{d}$ (cf.\ Corollary \ref{lsc-of-d-cor}), 
we conclude that 
\[
\mathsf{d}(K(s), K(t-))\leq V(t-)-V(s)\,.
\]
Hence, taking the limit as $s\to t_-$ we conclude that 
$\lim_{s\to t-} \mathsf{d}(K(s), K(t-)) =0$.  This concludes the proof. 
\end{proof}
\GGG
We 
 immediately have the following result.
\begin{corollary}
\label{cor:3.10}
\GGG
Let $K: [0,T]\to \Xamb$ fulfill \eqref{irreversible}.
Then for all $t\in [0,T]$
\begin{equation}
\label{Vard-explicit}
\Vari{\mdn}K0t= \huno(K(t){\setminus}K(0)) + \lambda
\bigg(\sum_{s \in
  \jump K {\cap} [0,t[} \left( \icost{\llim Ks}{K(s)}{+}
  \icost{K(s)}{\rlim K s} \right)+\icost{\llim Kt}{K(t)}\bigg)\,.
\end{equation}
\end{corollary}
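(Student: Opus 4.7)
The plan is to decompose the $\mathsf{d}$-variation into its two constituent pieces, exploiting the monotonicity \eqref{irreversible} to turn one of them into a telescoping sum, and then invoke the identity \eqref{only-jump} from Lemma~\ref{l:K-sigma-regulated} for the other piece.

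First, I would fix an arbitrary partition $\pi: 0=t_0<t_1<\cdots<t_N=t$ of $[0,t]$. Thanks to the monotonicity \eqref{irreversible}, every comparison $K(t_{j-1})\subset K(t_j)$ holds, so by the definition \eqref{dissipation-quasidistance} of $\mathsf{d}$ one can write
\[
\sum_{j=1}^N \mathsf{d}(K(t_{j-1}),K(t_j)) = \sum_{j=1}^N \huno(K(t_j){\setminus}K(t_{j-1})) + \lambda\sum_{j=1}^N \icost{K(t_{j-1})}{K(t_j)}.
\]
The crucial point is that the first sum on the right does \emph{not} depend on the partition: indeed, since $K(\cdot)$ is increasing, the sets $K(t_j){\setminus}K(t_{j-1})$ are pairwise disjoint with union $K(t){\setminus}K(0)$, hence by countable additivity of $\huno$ the first sum always equals $\huno(K(t){\setminus}K(0))$.

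Taking the supremum over all such partitions $\pi$ and using definition \eqref{tot-var} for both $\mathsf{d}$ and $\alpha$, one immediately deduces the splitting
\[
\Vari{\mdn}K0t = \huno(K(t){\setminus}K(0)) + \lambda \Vari{\icostname}K0t.
\]
Since by assumption $K$ takes values in $\Xamb=\Kf$, the identity \eqref{only-jump} of Lemma~\ref{l:K-sigma-regulated} applies to $\Vari{\icostname}K0t$ and yields exactly the bracketed expression on the right-hand side of \eqref{Vard-explicit}. Combining this with the splitting just obtained gives \eqref{Vard-explicit}, completing the argument.

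There is no real obstacle here: once the triangle inequality for $\alpha$ (Lemma~\ref{l:alpha-triangle}) and the jump characterization of the $\alpha$-variation (Lemma~\ref{l:K-sigma-regulated}) are available, the proof reduces to the simple observation that the $\huno$-component of $\mathsf{d}$ is exactly additive along increasing chains and so its variation collapses to a single term. One only has to note that the formula remains meaningful (though possibly $+\infty$ on both sides) without assuming a priori $\Vari{\mdn}K0T<+\infty$, since the identity trivially holds when either side diverges.
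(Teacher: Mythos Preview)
Your proof is correct and follows exactly the approach implicit in the paper, which presents the corollary as an immediate consequence of Lemma~\ref{l:K-sigma-regulated}: split $\mathsf{d}=\huno+\lambda\alpha$, observe that the $\huno$-part telescopes along any increasing chain to $\huno(K(t){\setminus}K(0))$, and then apply \eqref{only-jump} to identify $\Vari{\icostname}K0t$ with the bracketed jump sum.
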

\EEE

\subsection*{The viscous corrrection} 
We consider the viscous correction
 $\delta\colon 
 \K \times \K
 \to [0,+\infty]$  defined by 
\begin{equation}
\label{delta}
\delta(H,K):   = \ATW{H}{K}  +  \mu\,\icost{H}{K} \quad \REVISDOUBT \text{with }   \ATW{H}{K} : = \begin{cases}
\int_{K\setminus H}\mathrm{dist}(x,H) \dd \huno(x) & \text{if } H \subset K,
\\
+\infty & \text{otherwise}, \EEE
\end{cases}
\end{equation}
where $\mu>0$ is a prescribed constant, which plays the role of a nucleation cost for each new connected component of the crack set. As before, we adopt the convention that $\mathrm{dist}(x,\emptyset) = \mathrm{diam}(\Omega)$. 
\par
The first property  to be satisfied for $\delta$ to be an \emph{admissible} viscous correction is lower semicontinuity w.r.t.\ Hausdorff convergence. 
As we will see in the proof of Proposition \ref{Golab3}, 
   the contribution of the quasi-distance $\icostname$, modulated by  whatever positive  coefficient $\mu$, 
 has again a key role in ensuring lower semicontinuity, 
as it controls the growth of the number of connected components of $K$ disjoint from $H$. 
\begin{proposition}\label{Golab3}
\begin{enumerate}
\item
Let $H\in\K$ be fixed.  Then  for all $(K_n)_n \subset \K$ 
such that the number of connected components of $K_n$  disjoint from $H$ is uniformly bounded w.r.t.\ $n$, 
 we have that 
\begin{equation}
\label{lsc1}
  K_n \wsigma K  \ \Rightarrow \ 
\REVISDOUBT  \ATW{H}{K} \leq  \liminf_{n\to\infty}   \ATW{H}{K_n}\,. \EEE
\end{equation}
Moreover, 
\begin{equation}
\label{lsc-delta}
\delta(H, K) \leq \liminf_{n\to\infty} \delta(H,K_n).
\end{equation}

\item
For all $(H_n)_n, \, H  \in \K$, and $ (K_n)_n, \, K\in \K$ 
we have
\begin{equation}
\label{prel-lsc-statement}
  \left( H_n \wsigma H, \ K_n \wsigma K \right) \ \Rightarrow \ 
   \delta(H,K) \leq  \liminf_{n\to\infty} \delta(H_n, K_n). 
\end{equation}
\end{enumerate}
\end{proposition}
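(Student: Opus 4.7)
The strategy relies on the layer-cake representation
\[
\ATW HK=\int_0^{\mathrm{diam}(\Omega)}\huno\big((K{\setminus}H)\cap U_t^H\big)\dd t,\qquad U_t^H:=\{x\in\R^2:\dist xH>t\},
\]
combined with Fatou's lemma and the localized Go\l \c ab-type estimate of Proposition \ref{lsc-of-d}; the contribution of $\icostname$ to $\delta$ is handled separately via the lower semicontinuity statement in Lemma \ref{l:icost-1}. Throughout, we may assume that the relevant $\liminf$ is finite, and pass to a subsequence attaining it; since $\icostname$ takes nonnegative integer values, this enforces $H\subset K_n$ (resp.\ $H_n\subset K_n$) for all $n$ and, after a further extraction, $\icostname\le M$ uniformly, so that $H\subset K$ by Hausdorff convergence.

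For part (1), with $H$ fixed, the sublevel set $U_t^H$ is open and independent of $n$. Applying Proposition \ref{lsc-of-d} with $H_n\equiv H$ and the uniform bound on $\icost H{K_n}$ gives, for every $t\ge 0$,
\[
\huno\big((K{\setminus}H)\cap U_t^H\big)\le\liminf_n\huno\big((K_n{\setminus}H)\cap U_t^H\big),
\]
so that Fatou's lemma applied to the layer-cake yields $\ATW HK\le\liminf_n\ATW H{K_n}$, which together with Lemma \ref{l:icost-1} implies \eqref{lsc-delta}.

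Part (2) requires circumventing the fact that the weight $\dist\cdot{H_n}$ also varies with $n$. To this end I would introduce, for $\eps>0$, the fixed open set $B_\eps:=\{\dist\cdot H>\eps\}$. Since $\mathsf{h}_n:=\htop(H_n,H)\to 0$, the reverse triangle inequality $|\dist\cdot{H_n}-\dist\cdot H|\le \mathsf{h}_n$ gives $\dist\cdot{H_n}\ge \dist\cdot H/2\ge \eps/2$ on $B_\eps$ for all $n$ large. This yields both the uniform mass estimate
\[
\huno\big((K_n{\setminus}H_n)\cap B_\eps\big)\le\tfrac{2}{\eps}\ATW{H_n}{K_n}\le \tfrac{2M}{\eps}
\]
and the comparison
\[
\int_{(K_n{\setminus}H_n)\cap B_\eps}\dist xH\dd\huno(x)\le\ATW{H_n}{K_n}+\mathsf{h}_n\,\huno\big((K_n{\setminus}H_n)\cap B_\eps\big)\le\ATW{H_n}{K_n}+\tfrac{2M\mathsf{h}_n}{\eps}.
\]
On the left of this last inequality the weight $\dist\cdot H$ is fixed, so applying Proposition \ref{lsc-of-d} to the fixed open sets $B_{\max(\eps,t)}$ in the layer-cake of the left-hand integral, and then invoking Fatou, produces
\[
\int_{(K{\setminus}H)\cap B_\eps}\dist xH\dd\huno\le\liminf_n\int_{(K_n{\setminus}H_n)\cap B_\eps}\dist xH\dd\huno\le\liminf_n\ATW{H_n}{K_n}.
\]
Letting $\eps\downarrow 0$ and using monotone convergence on the left (since $B_\eps\uparrow\overline\Omega{\setminus}H\supset K{\setminus}H$) gives $\ATW HK\le\liminf_n\ATW{H_n}{K_n}$, and combining this with Lemma \ref{l:icost-1} yields \eqref{prel-lsc-statement}.

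The main obstacle is the one in part (2): a naive shift argument based on $\{\dist\cdot{H_n}>t+\mathsf{h}_n\}\subset U_t^H$ produces an error term of size $\mathsf{h}_n\huno(K_n{\setminus}H_n)$ which is not a priori controlled, since the bound $\ATW{H_n}{K_n}\le M$ does not pin down the total length of $K_n{\setminus}H_n$. The $\eps$-truncation is the key device that trades this bound for the uniform mass bound $\huno((K_n{\setminus}H_n)\cap B_\eps)\le 2M/\eps$, absorbing the error and making the application of Proposition \ref{lsc-of-d} to fixed open sets legitimate.
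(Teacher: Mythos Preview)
Your proof is correct. Part (1) is essentially identical to the paper's argument: layer-cake representation of $\ATW HK$, Proposition~\ref{lsc-of-d} applied to the fixed open superlevel sets $U_t^H$, then Fatou, with the $\icostname$-contribution handled separately by Lemma~\ref{l:icost-1}.

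For part (2) your route differs from the paper's. The paper replaces the moving $H_n$ by the fixed closed $\eps$-enlargement $H^\eps=\{x:\dist xH\le\eps\}\supset H_n$ (for $n$ large) and exploits the \emph{monotonicity} $\dist\cdot{H^\eps}\le\dist\cdot{H_n}$: this immediately gives $\int_{K_n\setminus H_n}\dist x{H_n}\dd\huno\ge\int_{K_n\setminus H^\eps}\dist x{H^\eps}\dd\huno$, to which part (1) applies directly with $H^\eps$ fixed, and one lets $\eps\downarrow0$ via Fatou. No error term, and no mass estimate on $\huno((K_n{\setminus}H_n)\cap B_\eps)$, is needed. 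By contrast you fix the weight to $\dist\cdot H$, which forces you to control the discrepancy $|\dist\cdot{H_n}-\dist\cdot H|\le\mathsf h(H_n,H)$ multiplied by a length; your $\eps$-truncation then buys exactly the uniform mass bound $\huno((K_n{\setminus}H_n)\cap B_\eps)\le 2M/\eps$ from the finiteness of $\ATW{H_n}{K_n}$. Both arguments are valid, but the paper's avoids the extra bookkeeping by choosing an intermediate weight that is automatically below $\dist\cdot{H_n}$ rather than one that is merely close to it.
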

\begin{proof} $\vartriangleright (1)$:
Let us  observe that for every lower semicontinuous nonnegative function $f\colon \Om\to [0,+\infty)$ we have 
$$
 \int_{K{\setminus}{H}} f \dd \huno(x) \le\liminf_{n \to \infty}\int_{K_n\setminus{ H}}f \dd \huno(x). \EEE
$$ 
Indeed, by the lower semicontinuity of $f$  the set 
$
U_t=\{x\in\Om:f(x)>t\}
$
is open and by Proposition \ref{lsc-of-d}, since the number of connected components of $K_n  $  disjoint from $H$ is uniformly bounded,  we have
$
 \huno((K{\setminus}H){\cap} U_t)\le \liminf_{n\to\infty} \,\huno((K_n{\setminus}H){\cap} U_t)\,.
$
Therefore, by the  Fatou Lemma we have
\[
\begin{aligned}
\int_{K{\setminus} H}f(x) \dd \huno(x) & =\int_0^{+\infty}\huno(\{x:f(x)>t\}{\cap} (K{\setminus} H)) \dd t\\
&
\le\liminf_{n \to \infty} \int_0^{+\infty}\huno(\{x:f(x)>t\}{\cap} (K_n{\setminus} H)) \dd t=\liminf_{n \to \infty} \int_{K_n{\setminus} H}f(x)\dd \huno(x)\,.
\end{aligned}
\]
Choosing $f(x) = \mathrm{dist}(x, H) $
 we obtain \eqref{lsc1}.
\par
Clearly, \eqref{lsc-delta} immediately follows: as we may suppose that $ \liminf_{n\to\infty} \delta(H, K_n)<\infty$, up to the extraction of a further subsequence, we have   that $\sup_n \mu\,\icost{H}{K_n} \leq 
\sup_n \delta(H, K_n)<\infty$; then, it suffices to recall that,  by Lemma
 \ref{l:icost-1},  $\liminf_{n\to\infty}\icost{H}{K_n} \geq \icost{H}K$. 
\par\noindent
 $\vartriangleright (2)$:  
  We may of course suppose $\sup_n\delta(H_n,K_n)<\infty$. By Lemma
 \ref{l:icost-1},  
 $\liminf_{n\to\infty} \icost{H_n}{K_n} \geq \icost HK$. 
   In order to show the lower semicontinuity of the first  contribution to $\delta$,  
let us introduce the set $H^\eps = \{ x\in \overline\Omega\, : \ \mathrm{dist}(x,H)\leq \eps\}$ for every fixed $\eps>0$. 
We have that $H_n\subset H^\eps$ for $n$ large enough; in what follows, for simplicity we will suppose that $H_n\subset H^\eps$ for all  $n$. Thus 
$\mathrm{dist}(x,H^\eps) \leq \mathrm{dist}(x,H_n)$ for all $x\in\overline\Omega$. 
Then,
\[
\int_{K_n\setminus H_n}\mathrm{dist}(x, H^\eps) \dd \huno(x) \leq \int_{K_n\setminus H_n}\mathrm{dist}(x, H_n) \dd \huno(x) \,.
\]
Therefore, 
\[
\begin{aligned}
\liminf_{n\to\infty} \int_{K_n\setminus H_n}\mathrm{dist}(x, H_n) \dd \huno(x) 
& \geq 
\liminf_{n\to\infty} \int_{K_n\setminus H_n}\mathrm{dist}(x, H^\eps) \dd \huno(x)
\\
&  \geq \liminf_{n\to\infty} \int_{K_n\setminus H^\eps }\mathrm{dist}(x, H^\eps) \dd \huno(x) 
\\
 &  {\geq} \int_{K\setminus H^\eps}\mathrm{dist}(x, H^\eps) \dd \huno(x) \geq \int_{K\setminus H}\mathrm{dist}(x, H^\eps) \dd \huno(x)\,,
 \end{aligned}
\] 
where for the last-but-one inequality we have applied 
$\eqref{lsc1}$. This is possible since the boundedness of  
$\icost{H_n}{K_n}$
and the inclusions  $H_n \subset H^\eps$ for all $n\in \N$ imply that the  number of connected components of $K_n$ disjoint from $H^\eps$ is uniformly bounded w.r.t.\ $n\in \N$. 
Since $\eps>0$ is arbitrary, we may pass to the limit 
as $\eps \down 0$ via the Fatou Lemma  to obtain \eqref{prel-lsc-statement}. \EEE
\end{proof}
In Section \ref{s:5} we will gain further insight into the properties of $\delta$, cf.\ Proposition \ref{l:ad-B} ahead.
\REVIS Therein, we will use estimate \eqref{higher-order-delta} to show 
 that $\delta$ is of higher order with respect to $\mathsf{d}$ in a precise, technical sense.   \EEE
\section{$\VE$ solutions: definition and main results}
\label{s:4}
In this Section we give the Definition of visco-energetic solution to the system $\CRIS$ for brittle fracture and state our main existence result.

\subsection{Definition of $\VE$ solution}
\label{ss:4.1}
 The definition of the $\VE$ concept (cf.\ Def.\ \ref{def:VEcrack} ahead) hinges on a notion of stability, introduced in Def.\ \ref{def:stability} below, that involves both the dissipation quasi-distance $\mdn$ and its viscous correction $\delta$, and on an energy-dissipation distance featuring a  cost that suitably measures the energy dissipated at jumps.
\paragraph{\bf Stable sets in the visco-energetic sense}
With the viscous correction $\delta$ defined in  \eqref{delta} at hand, 
we introduce the `corrected' dissipation $\cmdn \colon  
\mathcal{K}(\overline\Omega) \times \mathcal{K}(\overline\Omega) \to [0,+\infty]$ 
\[
\cmd{H}{K} : = \md{H}{K} + \corr{H}{K} =
\begin{cases}
 \calH^1(K{\setminus}H) +  
 \ATW{H}{K}
 + (\lambda+\mu) \icost HK   & \text{if } H \subset K,
 \\
+ \infty & \text{otherwise}.
 \end{cases}
\]
We are now in a position to introduce the notion of stability in the visco-energetic sense. 
\begin{definition}
	\label{def:stability}
	Let $Q\geq 0$. We say that $(t,K)\in [0,T]\times \Xamb$ 
	is \emph{$(\cmdn,Q)$-stable} if it satisfies
	\begin{equation}
	\label{Qstable}
	\ene t{K} \leq \ene t{K'} + \cmd{K}{K'}  +Q \qquad \text{for all } K'\in  \Xamb. 
	\end{equation}
	If $Q=0$, we will simply say that $(t,K)$ is $\cmdn$-stable. We denote by $\mathscr{S}_{\cmdn}$ the collection of all the $\cmdn$-stable points, and by
	 $\mathscr{S}_{\cmdn}(t) : = \{ K \in \Xamb\, : \ (t,K) \in \mathscr{S}_{\cmdn}\}$ its section at the  process  time $t\in [0,T]$. Analogously, with the symbols  $\mathscr{S}_{\cmdn}^Q$
	 and  $\mathscr{S}_{\cmdn}^Q(t)$ we will denote    the  $(\cmdn,Q)$-stable sets and their sections.
\end{definition}
\noindent
  We   introduce the \emph{residual stability function}
  $\rstabname\colon  [0,T]\times \Xamb \to [0,+\infty]$  via 
 \begin{equation}
 \label{residual-stability-function}
 \begin{gathered}
 \rstabt tK:  = \sup_{K'\in \Xamb} \left\{ \ene tK {-} \ene t{K'} {-} \cmd{K}{K'} \right\} = \ene tK -\mathscr{Y}(t,K)  \quad \text{with}
\\
 \mathscr{Y}(t,K)  :=  \inf_{K'\in \Xamb} \left( \ene t{K'} {+} \cmd{K}{K'}\right)\,.
 \end{gathered}
\end{equation} 
By the properties of $\calE$ (cf.\ Section \ref{ss:5.1-old}) and the lower semicontinuity of $\mdn$ and $\delta$, 
\begin{equation}
\label{minimal-set}
M(t,K): =  \mathrm{Argmin}_{K'\in \Xamb}\left( \ene t{K'} {+} \cmd{K}{K'}\right) \neq \emptyset.
\end{equation} 
 Observe that $\mathscr{R}$  in fact records the failure of the stability condition at a given point  $(t,K) \in [0,T]\times \Xamb$, since
 \begin{equation}
 \label{propR}
 \begin{aligned}
 \rstabt tK \geq 0   \text{ for all } (t,K) \in [0,T]\times \Xamb, \quad \text{with }
  \rstabt tK=0  \text{ if and only if } (t,K) \in \stab {\cmdn}\,.
 \end{aligned}
 \end{equation}
 Furthermore, $\mathscr{R}$ is lower semicontinuous w.r.t.\ the product topology $\htop_\R$  
 on $[0,T]\times \Xamb$
 if and only if for every $Q\geq0$ the $(\cmdn,Q)$-stable sets
  are $\htop_\R$-closed. 

 \paragraph{\bf The visco-energetic cost $\vecostname$}
It is defined by minimizing 
  a suitable  \emph{transition cost} functional over a class 
 of 
 curves, connecting the left- and right-limits $\lli K t$ and $ \rli K t$ at a jump point $t\in \jump K$. 
 Such curves  
 are in general defined on a compact subset $E\subset \R$ with  a possibly more complicated structure than  that of an interval. 
 They are continuous w.r.t.\  the Hausdorff topology $\htop$, increasing in the sense of \eqref{irreversible}, and 
  satisfying the following  additional continuity condition w.r.t.\ the dissipation distance $\mdn$
  \begin{equation}
 \label{1sided-continuity}
 \forall\, \eps>0 \ \exists\, \eta>0 \, : \ \mdn(\teta(s_0),\teta(s_1)) \leq \eps \quad \text{ for all } s_0,s_1 \in E \text{ with } s_0\leq s_1 \leq s_0+\eta\,.
 \end{equation} 
 Such conditions define the space 
 \begin{equation}
 \label{space-C-sigma-d}
 \begin{aligned}
 \Ctran(E;\Xamb) : = \{ \teta \in \mathrm{C}(E; (\Xamb,\htop))\, : \ & 
 \teta \text{ fulfills \GGG \eqref{irreversible} and
   \nc \eqref{1sided-continuity}} \}\,.
\end{aligned}
 \end{equation}
\REVISDOUBT We are now in a position to introduce the transition cost
$\tcostname{\VE}$
 that will give rise to the visco-energetic cost. We mention in advance that our definition 
 of $\tcostname{\VE}$ differs from  the definition in \cite[Def.\ 3.5]{SavMin16}:  as we will point out later on, 
 we have indeed tailored 
 the structure of  $\tcostname{\VE}$  to the present setup of crack propagation. Nonetheless, the notion of $\VE$ solution arising from our own definition of $\tcostname{\VE}$  ultimately coincides with the notion of evolution proposed in \cite{SavMin16}; we will detail this in Sec.\ \ref{ss:revisit} below. \EEE
\begin{definition}
\label{def-VE-trans-cost}
Let $E$ be a compact subset of $\R$, let  \REVIS $E^- := \min E$,  $ E^+ :=\max E$, \EEE and let
 $\teta \in  \Ctran(E;\Xamb)  $. 
For every $t\in [0,T]$ we define the \emph{transition cost function}
\begin{equation}
\label{ve-tcost}
\REVISDOUBT 
\tcost{\VE}{t}{\teta}{E} : =
\Gap{\ATWname}{\teta} E  +
(\lambda {+}\mu) \Vars {\icostname}{\teta} E \EEE
+ 
 \sum_{s\in E{\setminus}\{E^+\}}
 \rstab t{\teta(s)} \quad \text{with }
\end{equation}
\begin{enumerate}
\item \REVISDOUBT  $\Gap{\ATWname}\teta E
\colon = \sum_{I\in \hole E} \ATW{\teta(I^-)}{\teta(I^+)} $,  \EEE where $I^- := \inf I$,  $I^+:=\sup I$, and $\hole E$ is 
the collection  of the connected components of  $[E^-,E^+]\setminus E$;
\item $ \Vars {\icostname}\teta E $  the  $\icostname$-total variation of the curve $\teta$,  \REVISDOUBT cf.\ 
  \GGG \eqref{tot-var},  \EEE
\item the (possibly infinite) sum
\[
 \sum_{s\in E\setminus \{E^+\}}
  \rstab t{\teta(s)}: =  \begin{cases}
 \sup \{  \sum_{s\in P} \rstab t{\teta(s)} :  \, P \text{ finite, } P\subset E\setminus\{ E^+\}\} 
 &\text{ if }  
 E \neq \emptyset,
 \\
 0 &\text{ otherwise}.
 \end{cases}
\]
\end{enumerate}
\end{definition}
\noindent
\GGG
It is worth noticing that the contribution of $\Vars\icostname\teta E$
to the transition cost $\tcost{\VE}{t}{\teta}{E}$
is concentrated on the gaps of $E$.
 \begin{lemma}
 \label{l:GapVar1}
 Let  $E \subset \R$ be  compact subset and  $\teta \in  \Ctran(E;\Xamb)$. Then,
 \begin{equation}
 \label{total&gap-coincide}
 \Vars{\icostname}\teta E = \Gap{\icostname}\teta E
 =\sum_{I\in \mathscr H(E)}\icost{\teta(I^-)}{\teta(I^+)}.
 \end{equation}
 \end{lemma}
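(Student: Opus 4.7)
The second equality is the definition of $\Gap{\icostname}\teta E$ in the spirit of item~(1) of Definition~\ref{def-VE-trans-cost}, so the substantive content is $\Vars{\icostname}\teta E = \Gap{\icostname}\teta E$. The direction $\Vars{\icostname}\teta E \geq \Gap{\icostname}\teta E$ is routine: given any finite subfamily $I_1,\ldots,I_K \in \mathscr H(E)$ ordered so that $I_1^+ \leq I_2^- \leq \ldots \leq I_K^-$, the ordered endpoints $I_1^- \leq I_1^+ \leq \ldots \leq I_K^+$ all lie in $E$ and constitute a legitimate partition. Since $\icostname \geq 0$, the corresponding partition sum dominates $\sum_{k=1}^K \icost{\teta(I_k^-)}{\teta(I_k^+)}$, and taking the supremum over finite subfamilies yields the inequality.

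The reverse inequality rests on a single key observation combining three ingredients: $\icostname$ takes values in $\N\cup\{+\infty\}$ and in fact in $\N$ on the range of $\teta$ (since $\teta(s)\in\Xamb=\Kf$ has finitely many components); $\mdn \geq \lambda\,\icostname$ by definition~\eqref{dissipation-quasidistance}; and $\teta$ satisfies the one-sided continuity \eqref{1sided-continuity}. Applying \eqref{1sided-continuity} with $\eps := \lambda/2$, I obtain $\eta > 0$ such that $s_0, s_1 \in E$ with $0 \leq s_1 - s_0 \leq \eta$ forces $\lambda\,\icost{\teta(s_0)}{\teta(s_1)} \leq \mdn(\teta(s_0),\teta(s_1)) \leq \lambda/2$, hence $\icost{\teta(s_0)}{\teta(s_1)} = 0$ by integrality. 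In particular, every gap $I \in \mathscr H(E)$ with $|I| \leq \eta$ contributes zero to $\Gap{\icostname}\teta E$, and only finitely many ``big'' gaps $I_1, \ldots, I_P$ satisfy $|I_k| > \eta$.

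Given a partition $t_0 < \ldots < t_N$ of $E$, I refine it by inserting all big-gap endpoints $I_k^\pm$ lying in $[t_0, t_N]$ to obtain $s_0 < \ldots < s_M$; by the triangle inequality for $\icostname$ (Lemma~\ref{l:alpha-triangle}), the original partition sum is bounded by the refined one. Each refined subinterval $[s_{i-1}, s_i]$ is of one of two types: either (A) $[s_{i-1}, s_i] = [I_k^-, I_k^+]$ for some big gap, contributing $\icost{\teta(I_k^-)}{\teta(I_k^+)}$, or (B) the open interval $(s_{i-1}, s_i)$ contains no big-gap endpoint, in which case I claim $\icost{\teta(s_{i-1})}{\teta(s_i)} = 0$. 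Granting the claim, the refined sum equals $\sum_{k=1}^P \icost{\teta(I_k^-)}{\teta(I_k^+)} \leq \Gap{\icostname}\teta E$, and taking the supremum over partitions yields the reverse inequality.

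To prove the claim in (B), I would introduce $T := \{t \in [s_{i-1}, s_i] \cap E : \icost{\teta(s_{i-1})}{\teta(t)} = 0\}$, which contains $s_{i-1}$ and is closed in $[s_{i-1}, s_i] \cap E$ by the continuity bound combined with Lemma~\ref{l:alpha-triangle}. For any $t \in T$ with $t < s_i$, either there exists $t' \in E \cap (t, t+\eta] \cap [s_{i-1}, s_i]$, in which case $\icost{\teta(t)}{\teta(t')} = 0$ and the triangle inequality gives $t' \in T$; or $E \cap (t, t+\eta] = \emptyset$, which forces $(t, t+\eta)$ to lie in a gap $I$ with $I^- = t$ and $|I| \geq \eta$. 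Since $s_i \in E$ and $t < s_i$ this gap must satisfy $I^+ \leq s_i$, yielding a big gap strictly inside $[s_{i-1}, s_i]$ (the case $[s_{i-1},s_i]=[I^-,I^+]$ would be type~(A)); this contradicts the definition of~(B). Hence the walk always proceeds, and by a standard supremum argument $s_i = \sup T \in T$. The main obstacle is precisely this walking step: it depends delicately on integrality of $\icostname$, the quantitative one-sided continuity \eqref{1sided-continuity}, and the maximality of the refinement by big-gap endpoints (so that Case~2 of the walking produces an a-priori-excluded big gap inside $[s_{i-1},s_i]$).
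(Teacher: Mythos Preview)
Your argument is correct, but the route differs genuinely from the paper's. The paper gives a two-line reduction: it extends $\teta$ to a monotone curve $\hat\teta$ on the full interval $[E^-,E^+]$ by setting $\hat\teta(s):=\teta(I^-)$ for $s$ in each gap $I\in\mathscr H(E)$, checks that $\Vari{\icostname}{\hat\teta}{E^-}{E^+}=\Vars{\icostname}\teta E$ and that $\jump{\hat\teta}\subset\{I^+:I\in\mathscr H(E)\}$ with $\llim{\hat\teta}{I^+}=\teta(I^-)$ and $\hat\teta(I^+)=\rlim{\hat\teta}{I^+}=\teta(I^+)$, and then simply invokes the jump formula \eqref{only-jump} of Lemma~\ref{l:K-sigma-regulated}, which concentrates the $\alpha$-variation of any monotone $\Kf$-valued curve on its jump set. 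Your proof is instead self-contained: you extract a uniform step size $\eta$ from the one-sided $\mdn$-continuity \eqref{1sided-continuity} and the integrality of $\alpha$, reduce to finitely many ``big'' gaps, and control an arbitrary partition by refinement plus a walking/supremum argument. The paper's approach is shorter but leans on the earlier structural lemma (whose own proof uses a different, first-appearance-time argument for a new connected component); your approach avoids that dependency and makes transparent exactly where the quantitative continuity hypothesis \eqref{1sided-continuity} enters. A minor remark: your claim that $T$ is ``closed'' is only needed in the form $\sup T\in T$, and this indeed follows because any sequence in $T$ increasing to $t^*:=\sup T$ eventually lies within $\eta$ of $t^*$, so the triangle inequality for $\alpha$ applies in the correct (forward) direction.
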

 \begin{proof}
   We consider the extension $\hat \teta:[E^-,E^+]\to \Xamb$ obtained
   by setting $\hat \teta(s):= \teta(I^-)$
   for every $s\in I$, $I\in \mathscr H(E)$.
   It is easy to check that $\hat\teta$ satisfies
   \eqref{irreversible},
   $\Vari{\icostname}{\hat\teta}{E^-}{E^+}=\Vars{\icostname}\teta E$,
   and
   \[\jump{\hat\teta}\subset \{I^+:I\in \mathscr H(E)\}
     \quad\text{with}\quad
     \llim{\hat\teta}{I^+}=\teta(I^-),\quad
     \hat\teta(I^+)=\rlim{\hat\teta}{I^+}=
   \teta(I^+).\]
 Then,   \eqref{total&gap-coincide}  follows by \eqref{only-jump}.
   \end{proof}
   \par
   \nc
We can now introduce the  \emph{visco-energetic jump dissipation cost} $\vecostname\colon  [0,T]\times \Xamb \times \Xamb \to [0,+\infty]$ 
between the two end-points of a jump  of an \emph{increasing} curve $K: [0,T] \to \Xamb $. 
Namely, for all $K_-,\, K_+ \in \Xamb$ 
we set
 \begin{equation}
 \label{vecost}
 \begin{aligned}
  \vecost t{K_-}{K_+}: = \inf\{   \tcost{\VE}{t}{\teta}{E}\, :     &  \ \REVIS E  = \overline{E} \EEE \Subset \R, \ \teta  \in  
   \Ctran(E;\Xamb), \ 
   \ \teta(E^-) =K_-, \  \teta(E^+) =K_+ \}, 
   \end{aligned}
\end{equation}
with the convention $\inf \emptyset = +\infty$;
\GGG notice that $\vecost t{K_-}{K_+}=+\infty$ if $K_-\not\subset
K_+$ and
\begin{equation}
  \label{eq:3}
  \vecost t{K_-}{K_+}\ge (\lambda+\mu)\icost{K_-}{K_+}.
\end{equation}
\nc
Along the footsteps of \cite{SavMin16}, 
we define the 
 \emph{jump variation} functional, defined 
along a 
 curve  $K\colon  [0,T] \to \Xamb$ via 
\begin{equation}
\label{jump-Delta-e}
\begin{aligned}
 \Jvar {\vecostname}{K}{t_0}{t_1} : = 
   \vecost{t_0}{K(t_0)}{\rli K{t_0}}  &  + \sum_{t\in \jump K \cap (t_0,t_1)} \left(
\vecost t{\lli Kt}{K(t)} {+} \vecost t{K(t)}{\rli Kt} \right)
 \\
 & +\vecost{t_1}{ \lli K{t_1}}{ K(t_1)} 
  \quad \text{for all } [t_0,t_1] \subset [0,T].
 \end{aligned}
 \end{equation}

\par
We are now in a position to define the concept of visco-energetic solution of the system $\CRIS$,  featuring the 
$\cmdn$-stability condition \eqref{stab-VE} below, required  outside the   jump set  $\jump K$ of the curve $K$,  and  the 
 energy balance \eqref{enbal-VE}, where the energy dissipated  at jumps is recorded by the jump dissipation cost  introduced in  \eqref{jump-Delta-e}. 
\begin{definition}[Visco-energetic solution]
 \label{def:VEcrack}
A  curve  $ K \colon  [0,T]\to \Xamb$ 
 is   a visco-energetic $(\VE)$ solution of the  system $\CRIS$ 
 for brittle fracture if it satisfies
\begin{itemize}
\item[-] the monotonicity condition \eqref{irreversible}; 
\item[-] the $\cmdn$-stability condition
\begin{equation}
\label{stab-VE}
	\begin{aligned}
\ene t{K(t)} &  \leq \ene t{K'}  +
\cmd{K(t)}{K'} 
\quad \text{for all } K' \in  \Xamb\EEE \text{ and all } t \in [0,T]\setminus \jump K,
\end{aligned}
\end{equation}
\item[-]  the  $(\huno,\vecostname)$-energy-dissipation balance  
\begin{equation}
\label{enbal-VE}
\REVISDOUBT \ene t{K(t)}  + \huno(K(t){\setminus}K(0)) 
+  \Jvar {\vecostname}{K}{0}{t} = \ene 0{K(0)}+\int_0^t \partial_t \ene s{K(s)} \dd s \quad \text{for all } t \in [0,T]\,. \EEE
\end{equation}
\end{itemize}
\end{definition}
\GGG Notice that by \eqref{eq:3}
and \eqref{only-jump} a $\VE$ solution $K$ satisfies $\Vari{\mdn}K0T<+\infty$. \REVISAGAIN Moreover, the regularization parameters $\lambda$ and $\mu$ enter the definition  only via the term $ \Jvarname {\vecostname}$,  which in fact depends on their sum $(\lambda{+}\mu)$, as a consequence of \eqref{ve-tcost} and \eqref{vecost}. 

\REVISDOUBT
\subsection{Comparison with the original notion of $\VE$ solution}
\label{ss:revisit} 
We now aim to relate our notion of $\VE$ solution to the concept introduced in \cite{SavMin16}. 
In fact, in Proposition \ref{prop:they-coincide} below we will prove that Definition \ref{def:VEcrack} is a
 reformulation of  \cite[Def.\ 3.7]{SavMin16}. 
\par
Now, the first, significant difference between the setup of Sec.\ \ref{ss:4.1} and that of  \cite{SavMin16} resides in the definition of the 
$\VE$ cost along a transition curve
$\teta\in  \Ctran(E;\Xamb)$, with $E $ a compact subset of $\R$. 
In the approach of   \cite{SavMin16}, such cost, hereby denoted by 
$\oldtcostname{\VE}$, 
features the $\mdn$-total variation of $\teta$ on $E$
(cf.\ \ref{tot-var}), 
 as well as the contribution of the $\delta$-`gap variation', namely
\begin{equation}
\label{old-cost-SM}
\oldtcost{\VE}t\teta E: = 
 \Vars {\mdn}{\teta} E
 + \Gap{\delta}{\teta} E 
  \EEE
+ 
 \sum_{s\in E{\setminus}\{E^+\}}
 \rstab t{\teta(s)} 
\end{equation}
with 
 $\Gap{\delta}\teta E
\colon = \sum_{I\in \hole E} \delta(\teta(I^-),\teta(I^+))$. 
Accordingly, we denote by $\widehat{\vecostname} $ the jump dissipation cost obtained by minimizing 
$\oldtcostname{\VE}$ over all transition curves, cf.\ \eqref{vecost}. 
\par
Secondly, in the notion of $\VE$ solution introduced in \cite[Def.\ 3.7]{SavMin16} 
the energy-dissipation balance records dissipation  in an (apparently) different way, as it has the structure
\begin{equation}
\label{old-enbal-VE}
 \ene t{K(t)}  + 
 \Vari{\mdn} K0t 
+  \Jvar {\incostname}{K}{0}{t} = \ene 0{K(0)}+\int_0^t \partial_t \ene s{K(s)} \dd s
 \quad \text{for all } t \in [0,T]\,,
\end{equation}
with $\GGG  \Jvarname {\incostname}$
the \emph{incremental} jump variation functional induced by 
the `incremental cost' 
\[
\incost{t}{K_-}{K_+} = \oldvecost t{K_-}{K_+}  - \md{K_-}{K_+},
\]
namely
\begin{equation}
\label{IJUMP}
\begin{aligned}
 \Jvar {\incostname}{K}{t_0}{t_1} : = 
   \incost{t_0}{K(t_0)}{\rli K{t_0}}  &  + \sum_{t\in \jump K \cap (t_0,t_1)} \left(
\incost t{\lli Kt}{K(t)} {+} \incost t{K(t)}{\rli Kt} \right)
 \\
 & +\incost{t_1}{ \lli K{t_1}}{ K(t_1)} 
  \quad \text{for all } [t_0,t_1] \subset [0,T].
 \end{aligned}
 \end{equation}
\par
We will show  that, in the present setup for crack propagation, the energy-dissipation balance
 \eqref{old-enbal-VE}
  in fact coincides with \eqref{enbal-VE}
 in Definition \ref{def:VEcrack}. 
 As a first step, we compare the transition costs $\tcostname{\VE}$ and $\oldtcostname{\VE}$
 by getting further insight into   the `gap variation' induced by
 the cost $\icostname$, i.e.\  $\Gap{\icostname}\teta E
 = \sum_{I\in \hole E} \icost{\teta(I^-)}{\teta(I^+)} $. In this way, we
 unveil the structure of  the `gap variation' associated with 
  the viscous correction $\delta$
 for  the crack propagation model and relate the transition costs $\oldtcostname{\VE}$ 
 and $\tcostname{\VE}$.

 {\GGG First of all, by Lemma \ref{l:GapVar1} we immediately get}
 \begin{lemma}
 \label{l:GapVar}
 Let  $E \subset \R$ be  compact subset and  $\teta \in  \Ctran(E;\Xamb)$. Then,
 \begin{align}
 \label{Gap-delta-explicit}
 &
 \Gap{\delta}\teta E = \Gap{\ATWname}\teta E + \mu   \Vars{\icostname}\teta E\,,
 \\
 &
  \label{old-tcost-explicit}
  \oldtcost{\VE}t\teta E = \huno(\teta(E^+){\setminus}\teta(E^-)) + \tcost{\VE}t\teta E \,.
 \end{align}
 \end{lemma}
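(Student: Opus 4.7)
The plan is to reduce both identities to manipulations that only exploit the additive structure $\delta=\ATWname+\mu\,\icostname$ and $\mdn=\huno(\cdot{\setminus}\cdot)+\lambda\,\icostname$, together with the identification $\Gap{\icostname}\teta E=\Vars{\icostname}\teta E$ provided by Lemma \ref{l:GapVar1}.

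For \eqref{Gap-delta-explicit} I would simply expand $\delta(\teta(I^-),\teta(I^+))=\ATW{\teta(I^-)}{\teta(I^+)}+\mu\,\icost{\teta(I^-)}{\teta(I^+)}$ inside the definition of $\Gap{\delta}\teta E$ and split the sum over $I\in\hole E$ into $\Gap{\ATWname}\teta E+\mu\,\Gap{\icostname}\teta E$. Lemma \ref{l:GapVar1} then converts the second term into $\mu\,\Vars{\icostname}\teta E$, giving the claim.

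For \eqref{old-tcost-explicit} the key step is to compute $\Vars{\mdn}\teta E$. Given any ordered tuple $s_0\le\cdots\le s_N$ in $E$, the monotonicity condition \eqref{irreversible} forces $\teta(s_0)\subset\cdots\subset\teta(s_N)$, so the $\huno$-differences telescope:
\[
\sum_{j=1}^N \huno\bigl(\teta(s_j){\setminus}\teta(s_{j-1})\bigr)=\huno\bigl(\teta(s_N){\setminus}\teta(s_0)\bigr)
\]
by additivity of $\huno$ on disjoint unions. Since both the $\huno$-part and the $\icostname$-part of $\mdn$ are non-decreasing under refinement of partitions, and each is individually maximised by partitions containing the endpoints $E^\pm\in E$, I would conclude
\[
\Vars{\mdn}\teta E=\huno\bigl(\teta(E^+){\setminus}\teta(E^-)\bigr)+\lambda\,\Vars{\icostname}\teta E.
\]
Inserting this identity together with \eqref{Gap-delta-explicit} into the definition \eqref{old-cost-SM} of $\oldtcost{\VE}t\teta E$ collects the $\lambda$-term and the $\mu$-term into a single $(\lambda{+}\mu)\,\Vars{\icostname}\teta E$, and the remainder is precisely $\tcost{\VE}t\teta E$ as defined in \eqref{ve-tcost}, modulo the additive term $\huno(\teta(E^+){\setminus}\teta(E^-))$.

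The only mildly delicate point is the sup-splitting step used to evaluate $\Vars{\mdn}\teta E$: one must verify that a single partition containing $E^\pm$ simultaneously realises the supremum for both summands of $\mdn$. This is immediate because refining any partition $P$ to include extra points can only increase each of the sums $\sum_j\huno(\teta(s_j){\setminus}\teta(s_{j-1}))$ and $\sum_j\icost{\teta(s_{j-1})}{\teta(s_j)}$ separately, the first by disjoint-union additivity and the second by the triangle inequality established in Lemma \ref{l:alpha-triangle}. I do not anticipate any serious obstacle beyond this book-keeping.
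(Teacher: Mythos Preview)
Your proof is correct and follows exactly the approach the paper intends: the paper simply records Lemma~\ref{l:GapVar} as an immediate consequence of Lemma~\ref{l:GapVar1}, and what you have written is precisely the unpacking of that ``immediate''---splitting $\delta$ and $\mdn$ into their $\huno/\ATWname$ and $\icostname$ parts, telescoping the $\huno$-contribution via monotonicity, and invoking Lemma~\ref{l:GapVar1} to identify $\Gap{\icostname}\teta E$ with $\Vars{\icostname}\teta E$. Your remark on the sup-splitting is the right justification and nothing further is needed.
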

Relying on Lemma \ref{l:GapVar} and on the previously proved 
Lemma {\GGG \ref{l:K-sigma-regulated}},  we are now in a position to show that the 
energy-dissipation balances \eqref{enbal-VE} and \eqref{old-enbal-VE} do coincide.
\begin{proposition}
\label{prop:they-coincide}
Let $K : [0,T]\to \Xamb$ fulfill  {\GGG
  \eqref{irreversible}}. 
Then,
\begin{equation}
\label{evviva-uguali}
 \huno(K(t){\setminus}K(0)) 
+  \Jvar {\vecostname}{K}{0}{t}  = 
 \Vari{\mdn} K0t 
+  \Jvar {\incostname}{K}{0}{t}  \qquad \text{for all } t \in [0,T]\,
\end{equation}
In particular, $K$ satisfies \eqref{enbal-VE} 
if and only if it fulfills  \eqref{old-enbal-VE}.
\end{proposition}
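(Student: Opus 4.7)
The strategy is to first relate the two transition costs $\tcostname{\VE}$ and $\oldtcostname{\VE}$ via Lemma~\ref{l:GapVar}, use this to express the new jump dissipation cost $\vecostname$ in terms of the ``incremental'' cost $\incostname$, and finally match the accounting of $\mdn$-variation versus $\huno$-variation plus $\alpha$-jumps by means of Corollary~\ref{cor:3.10} and \eqref{only-jump}.

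In more detail, I would first observe that for any compact set $E\subset\R$ and any transition $\teta\in \Ctran(E;\Xamb)$ with endpoints $\teta(E^-)=K_-$ and $\teta(E^+)=K_+$, identity \eqref{old-tcost-explicit} of Lemma~\ref{l:GapVar} gives
\[
\oldtcost{\VE}t\teta E=\huno(K_+{\setminus}K_-)+\tcost{\VE}t\teta E.
\]
Since the $\huno$--term depends only on the endpoints of the transition and not on $\teta$ itself, taking the infimum in the definition \eqref{vecost} of $\vecostname$ (and the analogous definition of $\oldvecost tK_-K_+$) yields the pointwise relation
\[
\oldvecost t{K_-}{K_+}=\huno(K_+{\setminus}K_-)+\vecost t{K_-}{K_+}\qquad\text{whenever }K_-\subset K_+.
\]
Combining with the definition $\incost t{K_-}{K_+}=\oldvecost t{K_-}{K_+}-\md{K_-}{K_+}$ and the formula $\md{K_-}{K_+}=\huno(K_+\setminus K_-)+\lambda\,\icost{K_-}{K_+}$, the $\huno$--contributions cancel and I obtain the clean identity
\[
\incost t{K_-}{K_+}=\vecost t{K_-}{K_+}-\lambda\,\icost{K_-}{K_+}.
\]

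Next I would substitute this identity into the definition \eqref{IJUMP} of $\Jvar{\incostname}{K}{0}{t}$ and compare with \eqref{jump-Delta-e}, obtaining
\[
\Jvar{\vecostname}{K}{0}{t}-\Jvar{\incostname}{K}{0}{t}
=\lambda\,\bigg[\icost{K(0)}{\rli K0}+\sum_{s\in\jump K\cap(0,t)}\!\big(\icost{\lli Ks}{K(s)}+\icost{K(s)}{\rli Ks}\big)+\icost{\lli Kt}{K(t)}\bigg].
\]
On the other hand, Corollary~\ref{cor:3.10} yields
\[
\Vari{\mdn}K0t-\huno(K(t){\setminus}K(0))=\lambda\,\Vari{\icostname}K0t,
\]
and formula \eqref{only-jump} expands the right-hand side as
\[
\lambda\,\bigg[\sum_{s\in\jump K\cap[0,t)}\!\big(\icost{\lli Ks}{K(s)}+\icost{K(s)}{\rli Ks}\big)+\icost{\lli Kt}{K(t)}\bigg].
\]
The two $\lambda$--brackets then coincide: the only place where they could differ is the contribution of $s=0$, and there $\lli K0=K(0)$ so $\icost{\lli K0}{K(0)}=0$, while the term $\icost{K(0)}{\rli K0}$ appears in both expressions. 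Rearranging the equality between these two expressions yields \eqref{evviva-uguali}, whence the equivalence of the two energy--dissipation balances follows at once.

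The argument is essentially an accounting exercise; the only subtle point is the careful bookkeeping of boundary contributions at $s=0$ and $s=t$, where the conventions $\lli K0=K(0)$ and $\rli Kt=K(t)$ must be used to reconcile the indices in the two jump sums. Once this is handled, the proof reduces to two one-line computations built on Lemma~\ref{l:GapVar}, Corollary~\ref{cor:3.10}, and \eqref{only-jump}.
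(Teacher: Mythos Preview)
Your proposal is correct and follows essentially the same approach as the paper: both arguments combine the identity \eqref{old-tcost-explicit} from Lemma~\ref{l:GapVar} (yielding $\vecostname=\incostname+\lambda\,\icostname$ at each jump) with Corollary~\ref{cor:3.10} and \eqref{only-jump} to cancel the $\lambda\,\icostname$-contributions. Your treatment of the endpoint contributions at $s=0$ and $s=t$ is in fact more explicit than the paper's, which simply writes the sums over $\jump K$ without spelling out the boundary conventions.
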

\begin{proof}
  {\GGG It is not restrictive to check \eqref{evviva-uguali} for
    $t=T$.}
  If $K : [0,T]\to \Xamb$
  fulfills 
  {\GGG \eqref{irreversible} and at least one of the two sides in
  \eqref{evviva-uguali} is finite,
  it is immediate to check that $\Vari{\icostname}K0T<\infty$
  (this property is trivial if the right-hand side of
  \eqref{evviva-uguali} is finite; it follows from \eqref{eq:3} and
  \eqref{only-jump}
  if the left-hand side of
  \eqref{evviva-uguali} is finite).}
Then, Corollary  \ref{cor:3.10} applies, yielding that 
\begin{equation}
\label{1st-ingredient-coincidence}
 \huno(K(T){\setminus}K(0))  =  \Vari{\mdn} K0T  - \lambda  \sum_{s
   \in \jump K }
 \left( \icost{\llim Ks}{K(s)}{+} \icost{K(s)}{\rlim K s} \right)\,.
\end{equation}
{\GGG On the other hand, 
  thanks to \eqref{old-tcost-explicit}, for every $K_-, K_+ \in \Xamb$ with 
$K_-\subset K_+$
we have 
\[
\vecost t{K_-}{K_+} = \oldvecost  t{K_-}{K_+}  - \huno(K^+{\setminus}K_-) = \incost t{K_-}{K_+} + \lambda \icost {K_-}{K_+}\,,
\]
so that 
\begin{equation}
\label{second-ingredient-coincidence}
\Jvar {\vecostname}{K}{0}{T}  = \Jvar{\incostname} K 0T+  \lambda  \sum_{s \in \jump K {\cap} [0,T]} \left( \icost{\llim Ks}{K(s)}{+} \icost{K(s)}{\rlim K s} \right)\,.
\end{equation}
Combining \eqref{1st-ingredient-coincidence} and \eqref{second-ingredient-coincidence}, we deduce  \eqref{evviva-uguali}.}
\end{proof}
\EEE
\subsection{Existence and properties of $\VE$ solutions}
\label{ss:4.2}
This section collects all of our results on $\VE$ solutions for the  system 
$\CRIS$,  
with $\calE$, $\htop$, $\mdn$ and $\delta$ defined in    \eqref{energy}, \eqref{hausd-distance},  \eqref{dissipation-quasidistance}, \eqref{delta}, respectively:
 first and foremost, the existence Theorem \ref{thm:existVEcrack}. 
 \par
  As mentioned in the Introduction, $\VE$ solutions are constructed as follows:  for a given partition $\mathscr{P}_\tau = \{0= t^0_\tau<t^1_\tau<\ldots<t^{N_\tau}_\tau=T\}$ of the interval $[0,T]$ with time step $\tau: = \max_{i=1,\ldots N_\tau} (t_\tau^{i}{-}t_\tau^{i-1})$, 
and  an assigned datum $K_0 \in \K$, 
we consider the minimum problem 
\begin{equation}
\label{TIM}
K^i_\tau \in \Argmin_{K \in \Xs} \big\{\calE(t^i_\tau,K)   + \mathsf{d}(K^{i-1}_\tau,K) +   \delta(K^{i-1}_\tau, K)  \big\} \qquad \text{for } i=1,\ldots, N_\tau,
\end{equation}
which admits a solution thanks to the previously proved lower semicontinuity properties of $\mathsf{d}$ and $\delta$, and the lower semicontinuity/coercivity properties
of $\calE$ that will be precisely stated in Section \ref{ss:5.1-old}.  We introduce the (left-continuous) piecewise constant interpolant  of the elements $(K_\tau^i)_{i=1}^{N_\tau}$
\begin{equation}
\label{pcw-const}
\pwc K\tau \colon [0,T] \to \Xamb \qquad \pwc K\tau(0): = K_0, \qquad \pwc K\tau(t) := K^i_\tau\quad \text{if } t\in (t_\tau^{i-1}, t_\tau^i]\,.
\end{equation}
We are now in a position to give our existence result, stating the convergence of the above interpolants to a $\VE$ solution.
  Let us mention in advance that, starting from an initial datum $K_0 \in \Khf$ for some $h\geq 1$, we   construct a fracture evolution with values in some $\Kmf$, also providing 
 an explicit bound on the index $m$,  cf.\ \eqref{bound-connected-components} below. 

\begin{theorem}[Existence of $\VE$ solutions]
\label{thm:existVEcrack}
Assume that the time-dependent Dirichlet loading fulfills
\begin{equation}
\label{Dirichlet-load}
g\in \mathrm{C}^1([0,T];H^1(\Omega)). 
\end{equation}
Let $K_0 \in \Khf$ for some $h\geq 1$. 
Then, there exists a visco-energetic solution $K$  of the  system $\CRIS$ for brittle fracture  with  such that $K(0)=K_0$. \REVISAGAIN Indeed,  \EEE
for every sequence $(\tau_k)_k$ of time steps with $\tau_k \down 0$ as $k\to\infty$ there exist
  a (not relabeled) subsequence  of $\pwc K{\tau_k}$  
   and a $\VE$ solution $K$ such that  
\begin{equation}
\label{converg-interp}
\pwc K{\tau_k}(t) \wsigma K(t) \qquad \text{for all } t \in [0,T].
\end{equation}
Finally,
 every $\VE$ solution $K$ with 
$K(0)=K_0$ satisfies for every $t\in [0,T]$ 
\begin{equation}
\label{bound-connected-components}
\text{$K(t) \in \Kmf$, 
 with  } 
m \leq h + \REVISAGAIN \tfrac1{\lambda{+}\mu} \EEE  \exp(C_P T) (\ene 0{K_0}+1),
\end{equation} 
where $C_P$ is  the constant defined in \eqref{constant-CP} ahead.
\end{theorem}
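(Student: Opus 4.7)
The plan is to deduce Theorem \ref{thm:existVEcrack} from the abstract existence result \cite[Thm.~3.9]{SavMin16} applied to the viscously corrected system $\CRIS$, and then to extract the explicit bound \eqref{bound-connected-components} from the limiting energy-dissipation balance. The bulk of the work is verifying the structural assumptions of the abstract theorem in our concrete setting; the combinatorial bound is obtained a posteriori.

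First, I would gather the compatibility conditions required in \cite{SavMin16}. The ambient topology is the Hausdorff distance $\htop$ on $\Xamb=\Kf$; by Theorem \ref{thm:compactness} and Theorem \ref{Golab2}, the sublevel sets $\{K\in\Xamb:\mdn(K_o,K)\le R\}$ are contained in some $\tKmf$ (via Lemma \ref{l:key}) and are therefore $\htop$-sequentially compact. Lower semicontinuity of $\calE$ with respect to $\htop_{\R}$ along such sublevels is stated in Proposition \ref{l:ad-A}; lower semicontinuity of $\mdn$ and of $\corrn$ are Corollary \ref{lsc-of-d-cor} and Proposition \ref{Golab3}, while the triangle inequality for $\mdn$ is Proposition \ref{prop:properties-of-d}. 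The remaining nontrivial compatibility of $\delta$ with $\mdn$ amounts to the higher order estimate \eqref{higher-order-delta}, which (together with the bound on $\alpha$) implies $\corrn(K_n,K)\to 0$ whenever $\mdn(K_n,K)\to 0$ and $K_n\hacca K$; this is exactly the condition required in \cite{SavMin16}. The power control on $\partial_t\calE$ coming from \eqref{power} and the smoothness of $g$ yields the standard chain-rule/Gronwall ingredient: $|\partial_t\calE(t,K)|\le C_P(\calE(t,K)+1)$ for some constant $C_P$ depending on $\|\dot g\|_{C^0([0,T];H^1)}$ (this will be Proposition \ref{l:ad-A}). These are precisely the ingredients needed in \cite[Thm.~3.9]{SavMin16}.

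Second, with these verifications in hand, \cite[Thm.~3.9]{SavMin16} gives, for any vanishing sequence of partitions $(\mathscr P_{\tau_k})_k$, the existence of discrete solutions $(K^i_{\tau_k})$ of \eqref{TIM} (whose existence at each step follows from the direct method, combining the lower semicontinuity of $\calE+\mdn+\corrn$ with the compactness of sublevels), a subsequence of the piecewise constant interpolants $\pwc K{\tau_k}$, and a limit curve $K\colon[0,T]\to\Xamb$ such that \eqref{converg-interp} holds; moreover $K$ is a $\VE$ solution of the system in the sense of the original formulation of \cite{SavMin16}. By Proposition \ref{prop:they-coincide}, $K$ fulfills \eqref{stab-VE}--\eqref{enbal-VE} as in Definition \ref{def:VEcrack}, and since $K_\tau^0=K_0$ the passage to the limit preserves $K(0)=K_0$.

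Finally, I would derive the bound \eqref{bound-connected-components} directly from the energy-dissipation balance \eqref{enbal-VE} satisfied by any $\VE$ solution $K$ with $K(0)=K_0$. From \eqref{enbal-VE} and the nonnegativity of $\calE$ and of $\huno(K(t){\setminus}K(0))$ I get
\[
\Jvar{\vecostname}K0t\le \ene 0{K_0}+\int_0^t \partial_t\ene s{K(s)}\dd s.
\]
Applying the upper bound $\partial_t\ene s{K(s)}\le C_P(\ene s{K(s)}+1)$ and a Gronwall argument to \eqref{enbal-VE} gives $\ene s{K(s)}+1\le (\ene 0{K_0}+1)\exp(C_P s)$, hence
\[
\Jvar{\vecostname}K0t\le \exp(C_P T)(\ene 0{K_0}+1).
\]
On the other hand, from \eqref{eq:3} and Lemma \ref{l:GapVar1}, the jump cost dominates $(\lambda{+}\mu)\Vari{\icostname}K0t$, while by Lemma \ref{l:alpha-triangle} the triangle inequality yields $\icost{\emptyset}{K(t)}\le\icost{\emptyset}{K_0}+\icost{K_0}{K(t)}\le h+\Vari{\icostname}K0t$. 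Combining these,
\[
m=\icost{\emptyset}{K(t)}\le h+\tfrac{1}{\lambda+\mu}\exp(C_PT)(\ene 0{K_0}+1),
\]
which is \eqref{bound-connected-components}. The main obstacle I expect in the whole scheme is establishing the $\htop_{\R}$-lower semicontinuity of the residual stability functional $\rstabname$ on sublevels (equivalently, the $\htop_{\R}$-closedness of the $(\cmdn,Q)$-stable sets), as this is needed to preserve stability in the limit in the abstract theorem; this will rely crucially on the mutual recovery property built from Proposition \ref{prop:continuity-minimizers} together with the lower semicontinuity of $\mdn$ and $\delta$ proved in Proposition \ref{lsc-of-d} and Proposition \ref{Golab3}.
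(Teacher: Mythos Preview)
Your proposal is correct and follows essentially the same route as the paper: verify conditions $\mathbf{\langle A\rangle}$, $\mathbf{\langle B\rangle}$, $\mathbf{\langle C\rangle}$ (via Propositions \ref{l:ad-A}, \ref{l:ad-B}, \ref{l:ad-C}), invoke \cite[Thm.~3.9]{SavMin16} together with Proposition \ref{prop:they-coincide} for existence and convergence, and then extract \eqref{bound-connected-components} from \eqref{enbal-VE} by the Gronwall argument and the lower bound \eqref{eq:3}. One small correction: the identity you need to pass from $\Jvarname{\vecostname}$ to $(\lambda{+}\mu)\Vari{\icostname}K0t$ is \eqref{only-jump} in Lemma \ref{l:K-sigma-regulated} (not Lemma \ref{l:GapVar1}, which concerns transition curves rather than the solution curve $K$ itself).
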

\par
The \emph{proof} of Theorem \ref{thm:existVEcrack} will be carried out in Section \ref{s:5} based on  some preliminary results in 
which 
 we are going to show  
that the dissipation distance $\mdn$ defined \eqref{dissipation-quasidistance}, the viscous correction $\corrn$ in \eqref{delta}, and the driving energy functional
$\calE$ in \eqref{energy}  satisfy  a series of  \REVISDOUBT properties \EEE  that are at the heart  of the general existence result \cite[Thm.\ 3.9]{SavMin16} \REVISDOUBT for $\VE$ solutions. Such properties \EEE are 
conditions $\mathbf{<A>}$, $\mathbf{<B>}$, and $\mathbf{<C>}$ stated at the beginning of Section \ref{s:5}.
Relying on their \REVISDOUBT validity and on the fact that our $\VE$ solutions for the crack propagation model 
are indeed $\VE$ solutions in the sense of \cite{SavMin16} (cf.\ Proposition \ref{prop:they-coincide}), \EEE
we will  deduce the proof of   Theorem \ref{thm:existVEcrack} from 
\cite[Thm.\ 3.9]{SavMin16}. 

\par
In \cite{SavMin16} several results on the characterization of the $\VE$ concept, and
 on \emph{optimal jump transitions}, were proved. 
 As we will see in Section  \ref{s:5} ,
 such results also hold for our specific rate-independent system for brittle fracture,
 cf.\ Propositions \ref{prop:charact-VE}  and \ref{prop:OJT-VE} below.
 \par 
 Proposition\  \ref{prop:charact-VE}  provides 
 a twofold characterization of visco-energetic solutions.  First of all, in analogy to the properties 
of energetic and balanced viscosity solutions, 
for  a curve $K\colon  [0,T]\to \Xamb$ that is stable in the visco-energetic sense, the validity of the energy balance \eqref{enbal-VE} is equivalent to the corresponding  energy inequality $\leq$ (cf.\ \eqref{enue-VE}).  It is also equivalent to the validity of an energy-dissipation  inequality that solely involves the dissipation distance $\mdn$, 
cf.\ \eqref{enue-EN} below,
joint with jump conditions that also feature the $\VE$ cost~$\vecostname$. 
As we have recalled in the Introduction, the notion of quasistatic evolution in brittle fracture  features \eqref{enue-EN}, joint with a $\mdn$-stability condition. 
Therefore, the characterization provided by  Proposition \ref{prop:charact-VE}(2) highlights that 
 $\VE$ solutions  essentially differ from  quasistatic evolutions in the description of the energetic behavior of the system at jumps.  \EEE
\begin{proposition}{\cite[Prop.\ 3.8]{SavMin16}}
\label{prop:charact-VE}
Let the assumptions of Theorem\ \ref{thm:existVEcrack} hold.
Let $K\colon  [0,T]\to \Xamb$ satisfy the $\cmdn$-stability condition \eqref{stab-VE}.
\REVISAGAIN Then, the 
following conditions are equivalent:\EEE
\begin{enumerate}
\item $K$ satisfies the $(\huno,\vecostname)$-energy-dissipation balance \eqref{enbal-VE}{\rm;}
\item   $K$ satisfies the $(\huno,\vecostname)$-energy-dissipation upper estimate
\begin{equation}
\label{enue-VE}
\REVISDOUBT \ene T{K(T)} +\huno(K(T){\setminus}K(0))  
+  \Jvar {\vecostname}{K}{0}{T} \leq \ene 0{K(0)}+\int_0^T \partial_t \ene s{K(s)} \dd s ;  \EEE
\end{equation}
\item  $K$ satisfies  the $\huno$-energy-dissipation upper estimate for every $[s,t]\subset [0,T]$
\begin{equation}
\label{enue-EN}
\ene t{K(t)} +\huno(K(t){\setminus}K(s))
\leq \ene s{K(s)} +\int_s^t \partial_t \ene r{K(r)} \dd r, 
\end{equation}
joint with the following jump conditions at every jump point $t\in \jump K$:
\begin{equation}
\label{jump-conditions}
\begin{aligned}
&
\ene t{\lli Kt} - \ene t{K(t)}  && = &&  
\REVISDOUBT \calH^1(K(t){\setminus}\lli Kt) 
+ \vecost t{\lli Kt}{K(t)}, \EEE 
\\
& 
\ene t{K(t)} - \ene t{\rli Kt}  && =  &&  
\REVISDOUBT \calH^1(\rli Kt{\setminus}K(t)) 
+ \vecost t{K(t)}{\rli Kt}\,, \EEE
\\
& 
 \ene t{\lli Kt} - \ene t{\rli Kt}  && =  &&  
\REVISDOUBT  \calH^1(\rli Kt{\setminus}\lli Kt) 
+ \vecost t{\lli Kt}{\rli Kt}\,.  \EEE
\end{aligned}
\end{equation}
\end{enumerate}
\end{proposition}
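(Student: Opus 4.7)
The proof mirrors the general strategy of \cite[Prop.\ 3.8]{SavMin16}, specialized to the present rate-independent system for brittle fracture. In view of Proposition \ref{prop:they-coincide}, our notion of $\VE$ solution coincides with the one in \cite{SavMin16}, so the statement could in principle be quoted directly; I nonetheless outline the self-contained argument. The implication $(1)\Rightarrow(2)$ is trivial, since the energy balance \eqref{enbal-VE} evaluated on $[0,T]$ yields \eqref{enue-VE} as an equality.

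The heart of the argument is $(2)\Rightarrow(1)$, which I would obtain by establishing, using only the $\cmdn$-stability \eqref{stab-VE}, the complementary \emph{lower} energy inequality
\[
\ene s{K(s)} + \int_s^t \partial_t \ene r{K(r)} \dd r \ \leq\ \ene t{K(t)} + \calH^1(K(t){\setminus}K(s)) + \Jvar{\vecostname}{K}{s}{t}
\]
on every $[s,t]\subset[0,T]$. Combining this lower bound on $[0,t]$ and on $[t,T]$ with the global upper bound \eqref{enue-VE}, together with the additivity of $\calH^1(K(\cdot){\setminus}K(\cdot))$ and of $\Jvarname{\vecostname}$, forces equality for every $t\in[0,T]$. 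To prove the lower bound, I would fix a partition $s=r_0<r_1<\cdots<r_N=t$ of meshsize $\eta\to 0$ whose internal nodes avoid $\jump K$: at each such node, stability tested against $K'=K(r_i)$ yields
\[
\ene{r_{i-1}}{K(r_{i-1})} - \ene{r_{i-1}}{K(r_i)} \leq \cmd{K(r_{i-1})}{K(r_i)},
\]
and rewriting $\ene{r_{i-1}}{K(r_i)} - \ene{r_i}{K(r_i)} = -\int_{r_{i-1}}^{r_i}\partial_t\ene{r}{K(r_i)}\dd r$ and telescoping produces a Riemann sum converging to $\int_s^t \partial_t\ene{r}{K(r)}\dd r$ by continuity of the power along $K$; the $\cmdn$-sum on the right converges to $\Vari{\mdn}Kst$ (the $\delta$-contributions at continuity nodes vanish thanks to the higher-order bound \eqref{higher-order-delta}). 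At each $\tau\in\jump K$ I would insert a near-optimal monotone transition $\teta\in\Ctran(E;\Xamb)$ realizing $\vecost{\tau}{\lli K\tau}{\rli K\tau}$ up to $\varepsilon$ and apply the same telescoping along $\teta$, the gap contributions $\Gap{\ATWname}\teta E$ and the $\alpha$-jumps of $\teta$ accounting precisely for the extra viscous cost. Proposition \ref{prop:they-coincide} then identifies the limiting dissipation with $\calH^1(K(t){\setminus}K(s))+\Jvar{\vecostname}Kst$.

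The equivalence $(2)\Leftrightarrow(3)$ is obtained by localization and jump decomposition. Once $(2)\Leftrightarrow(1)$ is known, applying \eqref{enbal-VE} on arbitrary $[s,t]$ yields immediately \eqref{enue-EN}, while the jump identities \eqref{jump-conditions} follow by evaluating the balance on $[s,\tau]$, $[\tau,t]$ and letting $s\uparrow\tau$, $t\downarrow\tau$, using the one-sided continuity \eqref{def:BVsigma-d} and the definition \eqref{jump-Delta-e} of $\Jvarname{\vecostname}$. Conversely, summing \eqref{enue-EN} across the (at most countably many) inter-jump intervals and adding the three identities \eqref{jump-conditions} at each $\tau\in\jump K$ rebuilds \eqref{enue-VE}.

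The main obstacle will be the handling of jump points in the lower estimate. Along any near-optimal transition $\teta$ the stability \eqref{stab-VE} generically fails pointwise, and this failure is recorded precisely by $\rstabt{\tau}{\teta(s)}$ entering the transition cost $\tcostname{\VE}$ in \eqref{ve-tcost}. Quantitatively controlling these residual contributions while telescoping relies on the lower semicontinuity of $\mathscr{R}$, equivalently on the $\htop_\R$-closedness of the $(\cmdn,Q)$-stable sets, together with the higher-order estimate \eqref{higher-order-delta} ensuring that $\delta$ is negligible with respect to $\mdn$ at continuity points. This is exactly the point at which the compatibility between $\calE$, $\mdn$, and $\delta$ built into the setup of Section \ref{s:3} is used in an essential way.
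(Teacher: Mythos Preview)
Your proposal is correct and aligns with the paper's own treatment. The paper does not give a self-contained proof of this proposition: it simply records that, once conditions $\mathbf{\langle A\rangle}$, $\mathbf{\langle B\rangle}$, $\mathbf{\langle C\rangle}$ are verified and the equivalence of Proposition~\ref{prop:they-coincide} is in hand, the statement is an instance of \cite[Prop.~3.8]{SavMin16}. You identify this reduction explicitly in your first paragraph, which is already the complete argument as the paper presents it; the remainder of your write-up is an informal reconstruction of the proof from \cite{SavMin16} rather than anything the paper adds.
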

\noindent 
\REVISAGAIN In fact, cf. \cite[Prop.\ 3.8]{SavMin16} the $(\huno,\vecostname)$-energy-dissipation balance  is equivalent to \eqref{enue-EN}, joint with the jump inequalities $\geq$; nonetheless, for clarity we have preferred to give the jump conditions in the stronger form \eqref{jump-conditions}.  \EEE
\par
Finally, let us  gain further insight into the description of the system behavior at jumps provided by the $\VE$ concept,
via the  properties of 
\emph{optimal jump transitions}. 
Given $t\in [0,T]$ and $K_-, \, K_+\in \Xamb$, 
an admissible transition curve  $\teta \in \Ctran(E;\Xamb)$, with $E\Subset \R$, 
is an optimal transition between $K_-$ and $K_+$ at time $t\in [0,T]$ if it is  a minimizer for 
$\vecost t{K_-}{K_+}$, namely
\begin{equation}
\label{def:OJT}
\teta(E^-) = K_-, \quad \teta(E^+) = K_+,  \quad   \tcost{\VE}{t}{\teta}{E} = \vecost t{K_-}{K_+}\,.
\end{equation}
Furthermore,  we say that $\teta$ is  a
\begin{compactenum} 
\item \emph{sliding transition}, if $\rstab t{\teta(s)} =0$ for  all $s\in E$;
\item \emph{viscous transition}, if  $\rstab t{\teta(s)} >0$ for  all $s\in E \setminus \{ E^-,E^+\}$.
\end{compactenum} 
We have the following result,   cf.\  \cite[Thm.\ 3.14, Rmk.\ 3.15, Cor.\ 3.17, Prop.\ 3.18]{SavMin16}. 
\begin{proposition}
\label{prop:OJT-VE} 
  Let $K: [0,T]\to \Xamb$ be a $\VE$ solution of the  system $\CRIS$ 
  for brittle fracture. 
Then,
\begin{enumerate}
\item   at every jump point $t\in \jump K$ there exists 
 an optimal jump transition $\teta$ between $\lli Kt$ and $\rli Kt$ such that $\teta(s) = K(t)$ for some $s\in E$;
\item
 for a viscous transition $\teta$  between $\lli Kt $ and $\rli K t$
 the  set $E \setminus \{ E^-,E^+\}$ is discrete, i.e.,  all of its points are isolated: namely, $\teta$ is a \emph{pure jump} transition. In fact, $\teta$ may be represented as a finite, or countable, sequence $(\teta_n)_{n\in O}$, with $O $ a compact interval of $\mathbb{Z} \cup \{ \pm \infty\}$, satisfying
 \begin{equation}
 \label{minimum-jump}
 \teta_n \in M(t,\teta_{n-1}) = \mathrm{Argmin}_{K'\in \Xamb}  \left( \calE(t,K') {+}  \cmd{\teta_{n-1}}{K'} \right) 
  \quad \text{for all } n \in O\setminus\{O^-\};
 \end{equation}
  \item
 any optimal jump transition can be canonically decomposed into   an (at most) countable collection  of sliding and viscous transitions.
 \end{enumerate} 
\end{proposition}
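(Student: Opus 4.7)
The plan is to obtain the three assertions as consequences of the abstract results of \cite{SavMin16} (namely Thm.~3.14, Rmk.~3.15, Cor.~3.17, and Prop.~3.18 therein), once we have checked that our viscously corrected system $\CRIS$ for brittle fracture fits into that abstract framework. The structural hypotheses of \cite{SavMin16} (lower semicontinuity and coercivity of $\calE$, lower semicontinuity and triangle inequality of $\mdn$, lower semicontinuity of $\corrn$ together with the higher-order control \eqref{higher-order-delta}, and the compatibility between $\mdn$, $\corrn$ and the Hausdorff topology $\htop$) are the very conditions $\mathbf{<A>}$, $\mathbf{<B>}$, $\mathbf{<C>}$ that we already verify in Section~\ref{s:5} on the way to Theorem~\ref{thm:existVEcrack}. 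Moreover, Proposition~\ref{prop:they-coincide} identifies our notion of $\VE$ solution with the one of \cite{SavMin16}, so the cited abstract results are indeed available on any $\VE$ solution produced by Theorem~\ref{thm:existVEcrack}.

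For item~(1) I would argue as in \cite[Thm.~3.14]{SavMin16}: by Proposition~\ref{prop:charact-VE} the jump conditions \eqref{jump-conditions} split across the value $K(t)$, so any optimal transitions $\teta_1$ from $\lli Kt$ to $K(t)$ and $\teta_2$ from $K(t)$ to $\rli Kt$ can be concatenated into a transition $\teta$ defined on a disjoint union of the corresponding compact parameter sets; additivity of the transition cost $\tcost{\VE}{t}{\cdot}{\cdot}$ over such disjoint unions (together with the fact that the gap at the joining point is simply accounted for by $\ATW{K(t)}{K(t)}=0$ and $\icost{K(t)}{K(t)}=0$) ensures that $\tcost{\VE}{t}{\teta}{E}=\vecost t{\lli Kt}{\rli Kt}$.

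For item~(2), if $\teta$ is a viscous transition then $\rstab t{\teta(s)}>0$ throughout $E\setminus\{E^\pm\}$. Since $\rstabname(t,\cdot)$ is lower semicontinuous (by the lower semicontinuity of $\calE$, $\mdn$ and $\corrn$), if $s_0\in E\setminus\{E^\pm\}$ were an accumulation point of $E$, we could extract a sequence $s_n\to s_0$ in $E$ giving a strictly positive lower bound on $\rstab t{\teta(s_n)}$; summing these contributions would make $\sum_{s\in E\setminus\{E^+\}}\rstab t{\teta(s)}=+\infty$, contradicting the optimality of $\teta$. Hence $E\setminus\{E^\pm\}$ is discrete and can be listed as $\{s_n\}_{n\in O}$. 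The one-step minimality \eqref{minimum-jump} then follows by a standard competitor argument: if $\teta_n\notin M(t,\teta_{n-1})$, replacing $\teta_n$ by any element of $M(t,\teta_{n-1})$ would strictly decrease $\tcost{\VE}{t}{\teta}{E}$, contradicting optimality; non-emptiness of $M(t,\cdot)$ is granted by \eqref{minimal-set}.

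For item~(3) I would decompose an arbitrary optimal transition according to the closed set $Z:=\{s\in E:\rstab t{\teta(s)}=0\}$, which is relatively closed in $E$ by lower semicontinuity of $\rstabname$. Its complement in $E$ is relatively open and hence a countable union of (relative) maximal intervals, on each of which $\teta$ is viscous in the above sense; on the complementary closed pieces $\teta$ is sliding by definition. The restrictions of $\teta$ to each component are themselves optimal transitions between their endpoints (otherwise a better competitor could be spliced in, violating optimality of the whole), which yields the canonical decomposition. The only genuinely new point to check compared with \cite{SavMin16} is the behaviour of $\Gap{\ATWname}\teta{\cdot}$ and $\Vars{\icostname}\teta{\cdot}$ under such cut-and-paste operations; both are additive over disjoint unions of compact parameter sets (for $\Gap{\ATWname}$ directly from its definition via the holes $\hole E$, and for $\Vars\icostname\teta E$ thanks to Lemma~\ref{l:GapVar1}), so the abstract additivity arguments of \cite{SavMin16} carry over without change. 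I expect the discreteness argument in (2) to be the main technical step, as it is the place where the specific interplay between $\rstabname$ and the higher-order character \eqref{higher-order-delta} of $\corrn$ is used.
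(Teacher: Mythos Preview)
Your proposal is correct and follows exactly the paper's approach: the paper does not give an independent proof of this proposition but simply observes (at the beginning of Section~\ref{s:5}) that, once conditions $\mathbf{<A>}$, $\mathbf{<B>}$, $\mathbf{<C>}$ are verified and Proposition~\ref{prop:they-coincide} identifies the two notions of $\VE$ solution, the result is a direct consequence of \cite[Thm.~3.14, Rmk.~3.15, Cor.~3.17, Prop.~3.18]{SavMin16}. Your additional sketches of the abstract arguments for items (1)--(3) go beyond what the paper does and are not needed, since the whole point is that nothing specific to the brittle fracture model has to be checked here.
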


\section{Proofs of the main results}
\label{s:5} 
As previously mentioned, prior to carrying out the proof of  Theorem\ \ref{thm:existVEcrack},  in Sections
\ref{ss:5.1-old} and 
 \ref{ss:5.1} ahead
we shall check that the system $\CRIS$ 
given by \eqref{dissipation-quasidistance}, \eqref{delta}, and   \eqref{energy}
complies with a series of 
conditions 
that  were proposed in \cite[Sec.\ 2.2, Sec.\ 3.1, Sec.\ 3.3]{SavMin16}
as a basis for the existence of $\VE$ solutions. 
 Such conditions will also involve the perturbed functional $\calF\colon  [0,T]\times \Xamb \to [0,+\infty]$
\begin{equation}
\label{perturbed-energy}
\calF(t,K):= \ene tK + \md{K_o}{K}
\end{equation}
with $K_o \in \Khf$  for some $h\geq 1$,  a \emph{given}  \REVIS reference crack set. \EEE
\REVISDOUBT Indeed, any $K_o$ contained in the initial crack set $K_0$ may be chosen; for convenience, hereafter we will choose $K_o = \emptyset$, so that $\calF$ reduces to 
$\calF(t,K) = \ene tK + \mathcal{H}^1(K)\GGG +\lambda \icost \emptyset
K$. \EEE
By a sublevel of $\calF$ we mean a set of the form
\[
\{ (t,K) \in [0,T]\times \Kf\, : \ \calF(t,K) \leq r\}
\]
for some $r>0$.
The abstract conditions from 
\cite{SavMin16} read as follows:  
\begin{itemize}
\item[$\mathbf{<A>}$:] 
the energy functional  
$\calE\colon  [0,T]\times \Xamb \to [0,+\infty)$  
is  lower semicontinuous w.r.t.\ the product topology $\htop_\R$  on  the sublevels  of $\calF$, which are  $\htop_\R$-compact;  
 at \emph{every} $(t,K) \in [0,T]\times \Xs  $ there exists $\partial_t \ene tK$; $\partial_t  \calE\colon  [0,T]\times \Xamb \to \R$ is upper semicontinuous  w.r.t.\  $\htop_\R$  on the sublevels of $\calF$,  
and 
\begin{equation}
\label{A} 
\begin{aligned} 
& 
\exists\, C_P>0 \ \ \forall\, (t,K) \in [0,T]\times \Xs \, : \qquad |\partial_t \calE(t,K)| \leq C_P(\ene tK {+}1)\,.
\end{aligned}
\end{equation}
\item [$\mathbf{<B>}$:] the viscous correction $\delta $ is left-$\mdn$-continuous, namely for all sequences $(K_n)_n,\, K \in   \Xs$
\begin{equation}
\label{B-1}
\Big( K_n\wsigma K \text{ and } \md{K_n}{K} \to 0 \text{ as $n\to\infty$ }  \Big) \ \Rightarrow \ \lim_{n\to\infty}\delta(K_n,K) =0
\end{equation}
and for every $(t,K) \in  \mathscr{S}_{\mathsf{D}}$ there holds 
\begin{equation}
\label{B-2}
\limsup_{(s,H) \widetilde{\rightharpoonup} (t,K)}
\frac{\ene s{H} - \ene sK}{\md{H}{K}} \leq 1\,,
\end{equation}
where we have used the place-holder
\[
(s,H) \widetilde{\rightharpoonup} (t,K)  \text{  for  } (  s \to t, \ 
H \wsigma K,  \  \md{H}{K}\to 0,  \ 
	(s,H) \in \mathscr{S}_{\mathsf{D}}, \ s\leq t ). 
	\] 
\item [$\mathbf{<C>}$] For every $Q\geq 0$ the $(\cmdn,Q)$-quasistable sets $\mathscr{S}_{\cmdn}^Q$ have 
 $\htop_\R$-closed  intersections with the sublevels of the functional
$\calF$. 
\end{itemize}
  As observed in \cite{SavMin16}, \EEE  \eqref{B-2} in particular guarantees  that $\cmdn$-stability yields local $\mdn$-stability. 
\par
 Relying on the validity of properties  $\mathbf{<A>}$, $\mathbf{<B>}$, and   $\mathbf{<C>}$, 
in Section \ref{s:proof} ahead 
we shall
conclude the \REVISDOUBT proof of Theorem \ref{thm:existVEcrack}.  
Likewise, also Propositions  \ref{prop:charact-VE} and \ref{prop:OJT-VE} follow, as consequences of  \cite[Prop.\ 3.8, Thm.\ 3.14, Rmk.\ 3.15, Cor.\ 3.17, Prop.\ 3.18]{SavMin16}. \EEE
\subsection{Verification of  properties $\mathbf{<A>}$, $\mathbf{<B>}$, and $\mathbf{<C>}$}
\label{ss:5.1-old} 
 Propositions \ref{l:ad-A}, \ref{l:ad-B}, and \ref{l:ad-C} ahead state the validity of 
properties $\mathbf{<A>}$, $\mathbf{<B>}$, and $\mathbf{<C>}$, respectively, for our system 
$\CRIS$ for brittle fracture. 
Throughout the proof of Propositions \ref{l:ad-A} and \ref{l:ad-C}, we will repeatedly use 
 that, for sequences $(t_n,K_n)_n$ in the sublevels of the functional $\calF$ defined in  \eqref{perturbed-energy}, there holds
\begin{equation}
\label{sole-consequence}
 \sup_n \huno(K_n)<+\infty \quad \text{  and } \quad \exists\, m \geq 1\, : \, (K_n)_n \subset \Kmf 
\end{equation}
as a consequence of Lemma \ref{l:key}. 
\begin{proposition}
\label{l:ad-A}
Under the assumptions of Theorem\  \ref{thm:existVEcrack},
the functional $\calE\colon  [0,T]\times \Xamb \to [0,+\infty)$  defined in  \eqref{energy}
\REVIS and $\partial_t \calE$ from \eqref{power} \EEE
are  continuous  w.r.t.\ the $\htop_\R$-topology  on the sublevels of $\calF$ 
 and $\partial_t \calE$  fulfills \eqref{A}. 
\end{proposition}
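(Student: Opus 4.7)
The plan is to reduce everything to Proposition \ref{prop:continuity-minimizers}. Given a sequence $(t_n, K_n)$ in a sublevel of $\calF$ converging to $(t,K)$ in $\htop_\R$, I would first harvest the structural consequences of membership in the sublevel. Since $\calF(t,K) = \calE(t,K) + \huno(K) + \lambda\,\icost{\emptyset}{K}$, the bound $\calF(t_n,K_n)\le r$ gives both $\sup_n \huno(K_n) < +\infty$ and, via Lemma \ref{l:key}, an integer $m\ge 1$ with $(K_n) \subset \Kmf$; Theorem \ref{Golab2} then ensures $K \in \Kmf$ as well. The assumption $g\in \mathrm{C}^1([0,T]; H^1(\Omega))$ also gives $g(t_n)\to g(t)$ strongly in $H^1(\Omega)$.

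The heart of the argument is then a single application of Proposition \ref{prop:continuity-minimizers}: letting $u_n$ and $u$ be solutions of the minimum problem \eqref{min-u} for data $(g(t_n), K_n)$ and $(g(t), K)$ respectively, one obtains $\nabla u_n \to \nabla u$ strongly in $L^2(\Omega;\R^2)$. Since cracks have zero Lebesgue measure, this at once yields
\[
\calE(t_n,K_n) = \tfrac12 \int_\Omega |\nabla u_n|^2\dd x \;\longrightarrow\; \tfrac12 \int_\Omega |\nabla u|^2\dd x = \calE(t,K),
\]
establishing continuity of $\calE$ on sublevels of $\calF$. For $\partial_t\calE$, I would rewrite formula \eqref{power} as $\partial_t\calE(t,K) = \int_\Omega \nabla\dot g(t)\cdot \nabla u(t)\dd x$ (again using that $K$ has zero Lebesgue measure). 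Strong convergence $\nabla \dot g(t_n) \to \nabla \dot g(t)$ in $L^2$, guaranteed by $\dot g \in \mathrm{C}([0,T]; H^1(\Omega))$, combined with the strong $L^2$-convergence of $\nabla u_n$ just proved, gives continuity of $\partial_t\calE$ by the product-of-strong-convergences argument.

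The bound \eqref{A} is the simplest part: Cauchy--Schwarz applied to \eqref{power} gives
\[
|\partial_t\calE(t,K)| \le \|\nabla\dot g(t)\|_{L^2}\|\nabla u(t)\|_{L^2} = \|\nabla\dot g(t)\|_{L^2}\sqrt{2\calE(t,K)},
\]
and the elementary estimate $\sqrt{2x}\le x+1$ (equivalent to $(x-1)^2+2x\ge 2x$, i.e.\ $x^2+1\ge 0$) combined with $C_P := \max_{t\in[0,T]}\|\nabla \dot g(t)\|_{L^2}<+\infty$ yields $|\partial_t\calE(t,K)| \le C_P(\calE(t,K)+1)$.

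There is no real obstacle: all the analytic work has already been done in \cite{DMToa02} and imported as Proposition \ref{prop:continuity-minimizers} and Theorem \ref{Golab2}. The only thing worth checking carefully is that the sublevel hypothesis really delivers the uniform bounds $\sup_n\huno(K_n)<\infty$ and $(K_n)\subset \Kmf$ required to invoke Proposition \ref{prop:continuity-minimizers}, which is where Lemma \ref{l:key} and the choice $K_o=\emptyset$ in the definition of $\calF$ (hence the $\lambda\,\icost{\emptyset}{\cdot}$ term) play their role.
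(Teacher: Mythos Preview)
Your proof is correct and follows essentially the same approach as the paper: extract the uniform bounds on $\huno(K_n)$ and the number of connected components from the sublevel hypothesis via Lemma~\ref{l:key}, then invoke Proposition~\ref{prop:continuity-minimizers} to get strong $L^2$-convergence of the gradients, from which continuity of both $\calE$ and $\partial_t\calE$ follow. The only cosmetic difference is in the derivation of \eqref{A}: the paper uses a Young-type inequality yielding $C_P = \|\nabla\dot g\|_{L^\infty(0,T;L^2)}\bigl(\tfrac12\calL^2(\Omega)\vee 1\bigr)$, whereas your Cauchy--Schwarz plus $\sqrt{2x}\le x+1$ gives the slightly sharper $C_P = \max_{t}\|\nabla\dot g(t)\|_{L^2}$.
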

\begin{proof} 
 Let $(t_n,K_n)_n\subset [0,T]\times \Xamb$ with 
$\sup_n \calF(t_n,K_n)<+\infty$ converge to some $(t,K)$ w.r.t.\ the $\htop_\R$-topology.
It follows from \eqref{sole-consequence} and 
Theorem \ref{Golab2}
that $K\in \Kmf$, too. 
Since $g \in \mathrm{C}^1([0,T]; H^1(\Omega))$, we have that
$g(t_n) \to g(t)$ in $H^1(\Omega)$. Therefore,  thanks to \eqref{sole-consequence}
we may apply Proposition \ref{prop:continuity-minimizers} and conclude that any sequence
 $(u_n)_n $ with $u_n \in \Argmin_{v \in \Adm{g(t_n)}{K_n}}  \int_{\Omega{\setminus}K_n} \tfrac12 |\nabla v|^2 \dd x$  fulfills 
 $\nabla u_n \to \nabla u$ as $n\to\infty$ in $L^2(\Omega;\R^2)$, with  $u \in \Argmin_{v \in \Adm{g(t)}{K}} \int_{\Omega{\setminus}K} \tfrac12 |\nabla v|^2 \dd x$.
 Then,
 \[
 \ene {t_n}{K_n} = \int_{\Omega{\setminus}K_n} \tfrac12 |\nabla u_n|^2 \dd x \to 
 \int_{\Omega{\setminus}K} \tfrac12 |\nabla u|^2 \dd x = \ene tK \qquad \text{as } n \to\infty.
 \]
 \par
 Since $g \in \mathrm{C}^1([0,T];H^1(\Omega))$, formula \eqref{power} gives $\partial_t \ene tK$ at all $(t,K) \in [0,T]\times \K$. We have 
 \[
 |\partial_t \ene tK| \leq \int_{\Omega{\setminus}K}|\nabla \dot{g}(t)| |\nabla u| \dd x \leq \| \nabla \dot{g}\|_{L^\infty (0,T;L^2(\Omega;\R^2))} \left( \int_{\Omega{\setminus}K} \tfrac12 |\nabla u|^2 \dd x + \tfrac12 \calL^2(\Omega{\setminus}K) \right).
 \]
Then, estimate \eqref{A} follows with 
\begin{equation}
\label{constant-CP}
C_P  =  \| \nabla \dot{g}\|_{L^\infty (0,T;L^2(\Omega;\R^2))} \left(\tfrac12 \calL^2(\Omega) {\vee} 1\right) 
\end{equation}
 The very same arguments  used for the continuity of $\calE$, combined with the fact that $\dot{g} \in \mathrm{C}^0([0,T];H^1(\Omega))$, in fact yield the $\htop_\R$-continuity of $\partial_t \calE$.   This concludes the proof. 
\end{proof}
\par
 With the following result we check the validity of 
 condition $\mathbf{< B>}$;  we shall in fact prove the stronger condition  \eqref{strongerB} below. 
\begin{proposition}
\label{l:ad-B}
The dissipation distance $\mdn$ defined in \eqref{dissipation-quasidistance}
and the viscous correction $\corrn$ in \eqref{delta} fulfill  
\begin{equation}
\label{strongerB} 
\lim_{n\to\infty} \frac{\delta(K_n,K)}{\md{K_n}K} =0 \qquad \text{for all $(K_n)_n, \ K \in \Xamb$ such that }  K_n \wsigma K \text{ and }  \lim_{n\to\infty} \md{K_n}K=0.
\end{equation}
In particular, conditions \eqref{B-1} and \eqref{B-2} are satisfied.
\end{proposition}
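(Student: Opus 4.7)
The plan is to reduce \eqref{strongerB} to a direct application of the higher-order estimate \eqref{higher-order-delta}, exploiting the integer-valued nature of $\icostname$. First, I would observe that $\md{K_n}K<+\infty$ forces $K_n\subset K$ by the very definition of $\mdn$, so the hypothesis $\md{K_n}K\to 0$ yields $K_n \subset K$ for all $n$ large. Since moreover $\lambda\,\icost{K_n}K \le \md{K_n}K \to 0$ and $\icostname$ is $\mathbb{N}$-valued, one gets $\icost{K_n}K=0$ eventually; hence for such $n$ we have $\corr{K_n}K = \ATW{K_n}K$ and $\md{K_n}K = \huno(K{\setminus}K_n)$. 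We may restrict to those $n$ for which $\md{K_n}K>0$, since otherwise the separation property \eqref{true-separation} forces $K_n = K$ and both sides of the ratio in \eqref{strongerB} vanish trivially.

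The key step is then just \eqref{higher-order-delta}, itself an immediate consequence of the bound $\dist{x}{K_n} \le \htop(K_n,K)$ valid for every $x\in K$ (recall $K_n\subset K$): indeed,
\[
\ATW{K_n}K = \int_{K\setminus K_n} \dist{x}{K_n} \dd\huno(x) \le \htop(K_n,K)\,\huno(K{\setminus}K_n).
\]
Dividing by $\md{K_n}K = \huno(K{\setminus}K_n)>0$ and using $\htop(K_n,K)\to 0$ (since $K_n\wsigma K$ by assumption), I obtain $\corr{K_n}K/\md{K_n}K \to 0$, which is exactly \eqref{strongerB}.

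The two announced conclusions follow as corollaries. For \eqref{B-1} I would simply write $\corr{K_n}K = \md{K_n}K\cdot \bigl(\corr{K_n}K/\md{K_n}K\bigr) \to 0\cdot 0 = 0$. For \eqref{B-2}, given any $(s,H)\widetilde{\rightharpoonup}(t,K)$ with $(s,H)\in \stab{\cmdn}$, the finiteness of $\md{H}K$ forces $H\subset K$, so I would test the $\cmdn$-stability of $(s,H)$ against $K' = K$ to obtain
\[
\ene s H - \ene s K \le \cmd{H}K = \md{H}K + \corr{H}K.
\]
Dividing by $\md{H}K$ and passing to the $\limsup$, the contribution $\corr{H}K/\md{H}K$ vanishes by \eqref{strongerB}, leaving the desired bound $\le 1$. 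Since the $\mathbb{N}$-valuedness of $\icostname$ and the key inequality \eqref{higher-order-delta} are already in hand, no real obstacle remains; the argument is essentially a bookkeeping reorganisation of these two ingredients, with the only nuance being the harmless $0/0$ case handled above.
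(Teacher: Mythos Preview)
Your proof is correct and follows essentially the same route as the paper's: both reduce to the observation that $\icost{K_n}K=0$ eventually (by integer-valuedness) and then apply the bound $\ATW{K_n}K \le \htop(K_n,K)\,\huno(K{\setminus}K_n)$, deducing \eqref{B-2} by testing $\cmdn$-stability against $K'=K$. Your treatment is slightly more explicit in handling the trivial $0/0$ case and in spelling out \eqref{B-1}, but the substance is identical.
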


\begin{proof}
Since $ \md{K_n}K\to 0$ as $n\to\infty$, we have that, for $n$ sufficiently large,   $K_n \subset K$
 and the integers $ \icost{K_n}{K}$ are  $0$. 
 Therefore,
it is sufficient to observe that
\begin{equation}
\label{eq:ad-C}
\begin{aligned}
 \frac{\delta(K_n,K)}{\md{K_n}K}  & \leq
\frac1{\calH^1(K{\setminus}K_n)} \int_{K{\setminus}K_n} \dist x{K_n} \dd \calH^1(x)    \leq \mathsf{h}(K_n,K) \longrightarrow 0  \EEE\quad \text{as } n \to\infty, 
\end{aligned}
\end{equation}
\EEE
where the last inequality follows  by the definition of  Hausdorff distance.
\REVIS Then, if   $s \to t$ with $s\leq t$, 
$H \wsigma K $ with $\md{H}{K}\to 0 $, and 
	$(s,H) \in \mathscr{S}_{\mathsf{D}}$, we have that 
	\[
	\limsup_{(s,H) \widetilde{\rightharpoonup} (t,K)}
\frac{\ene s{H} - \ene sK}{\md{H}{K}} \leq \limsup_{H \wsigma K, \ \md{H}{K}\to 0}\frac{\md{H}K+\delta(H,K)}{\md{H}K} = 1\,,
	\] 
	which gives \eqref{B-2}. \EEE
\end{proof}

We conclude this section with a discussion on the closedness of the intersection of the $Q$-stable sets  with  the sublevels of the functional $\mathcal{F}$ introduced in  \eqref{perturbed-energy}. \EEE
 It is immediate to see that this property is guaranteed by the following condition: given 
 a
sequence  $(t_n,K_n)_n \subset \mathscr{S}_{\cmdn}^Q$, 
for some $Q\geq 0$, 
such that  $(t_n,K_n) \stackrel{\htop_\R}{\to} (t,K)$ as $n\to\infty$ 
and $\sup_n \calF(t_n,K_n)<+\infty$, 
for every  $K'\in \Xamb$,  with $K'\supset K$  and 
$\md{K}{K'}<+\infty$,  
 we can exhibit  a sequence $(K_n')_n$ such that $K_n'\supset K_n$ and 
\begin{equation}
\label{rec-seq}
\begin{aligned}
&
\limsup_{n\to\infty} \left(\ene{t_n}{K_n'} {-} \ene{t_n}{K_n} {+}\md{K_n}{K_n'} {+}\delta(K_n,K_n') {+}Q  \right)
\\
 & \leq \ene{t}{K'} - \ene{t}{K} +\md{K}{K'} +\delta(K,K') +Q \,;
 \end{aligned}
\end{equation}
\REVIS along the footsteps of \cite{MRS06} (see also \cite[Chap.\ 2.4]{MieRouBOOK}), we shall refer to  $(K_n')_n$ as a `mutual recovery sequence'. \EEE
In this way, we obtain $\ene{t}{K'} - \ene{t}{K} +\md{K}{K'} +\delta(K,K') +Q \geq 0$ for all $K'\in \Xamb$,  
whence $(t,K)\in \mathscr{S}_\cmdn^Q$. 
Indeed, in Proposition \ref{l:ad-C} below we shall obtain \eqref{rec-seq} in a  stronger form.
\begin{proposition}
\label{l:ad-C}
Let $(t_n,K_n)_n \subset \mathscr{S}_{\cmdn}^Q$ be a sequence of  $Q$-stable points 
fulfilling   $\sup_n \calF(t_n,K_n)<+\infty$. 
Suppose that  $(t_n,K_n) \stackrel{\htop_\R}{\to} (t,K)$.  
Then, for every  $K' \in \Xs$  with $K'\supset K$ 
and $\md{K}{K'}<+\infty$
there exists a sequence $(K_n')_n$ such that  $K_n'\supset K_n$ and  the following convergences hold as $n\to\infty$: 
\begin{subequations}
\label{rec-seq-better}
\begin{align}
&
\label{rec-seq-0}
K_n'\wsigma K',
\\
&
\label{rec-seq-1}
\ene{t_n}{K_n'} \to \ene t{K'},
\\
&
\label{rec-seq-2}
\md{K_n}{K_n'}  \to \md{K}{K'},
\\
&
\label{rec-seq-3}
 \delta(K_n,K_n')  \to \delta(K,K').
\end{align}
\end{subequations}
In particular, condition $\mathbf{<C>}$ is valid. 
\end{proposition}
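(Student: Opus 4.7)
My plan is to build, for each admissible test $K'\supset K$, a mutual recovery sequence $(K_n')$ component-by-component, attaching each connected component of $K'$ to $K_n$ in a way that matches the limiting topology. Writing $K'=\bigcup_{i=1}^p L_i$ for its finitely many connected components, I would separate the indices according to whether $L_i\cap K$ is empty or not. For each $i$ with $L_i\cap K=\emptyset$, I would include the whole $L_i$ in $K_n'$; Hausdorff convergence $K_n\wsigma K$ would ensure that $L_i$ stays disjoint from $K_n$ for $n$ large. For each $i$ with $L_i\cap K\neq\emptyset$, I would fix $x_i\in L_i\cap K$ and choose $x_{i,n}\in K_n$ with $x_{i,n}\to x_i$ (possible since $K_n\wsigma K\ni x_i$), and add to $K_n'$ the closure $\overline{L_i\setminus K}$ together with a short rectifiable bridge $B_{i,n}\subset\overline\Omega$ of length at most $C_\Omega\,\mathsf{h}(K_n,K)$ joining $x_{i,n}$ to $x_i$, where $C_\Omega$ depends only on the Lipschitz character of $\partial\Omega$. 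The full construction reads
\[
K_n':=K_n\cup\bigcup_{i\,:\,L_i\cap K=\emptyset}L_i\,\cup\,\bigcup_{i\,:\,L_i\cap K\neq\emptyset}\bigl(\overline{L_i\setminus K}\cup B_{i,n}\bigr).
\]

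By construction $K_n'\supset K_n$ and $K_n'\wsigma K'$. The bound $\huno(K_n'\setminus K_n)\le\huno(K'\setminus K)+O(\mathsf{h}(K_n,K))$, together with the triangle inequality for $\icostname$ of Lemma~\ref{l:alpha-triangle}, would keep $(t_n,K_n')$ in a sublevel of $\calF$, so Proposition~\ref{l:ad-A} would give $\ene{t_n}{K_n'}\to\ene{t}{K'}$. The key point of the bridge construction is that it enforces $\icost{K_n}{K_n'}=\icost{K}{K'}$ for $n$ large: each bridge merges a touching component of $K'$ with the component of $K_n$ containing $x_{i,n}$, while each non-touching $L_i$ contributes exactly one new component disjoint from $K_n$. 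Since the total bridge length is $o(1)$ and $\huno(\overline{L_i\setminus K})=\huno(L_i\setminus K)$, I would then deduce $\huno(K_n'\setminus K_n)\to\huno(K'\setminus K)$, hence $\md{K_n}{K_n'}\to\md{K}{K'}$; the matching lower bound is supplied by Corollary~\ref{lsc-of-d-cor}. For $\delta(K_n,K_n')$ the $\mu\icostname$ part matches by the same argument, and $\ATW{K_n}{K_n'}\to\ATW{K}{K'}$ should follow from uniform convergence $\dist{x}{K_n}\to\dist{x}{K}$ on $\overline\Omega$ (a direct consequence of $K_n\wsigma K$) combined with dominated convergence; the liminf inequality comes from Proposition~\ref{Golab3}.

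The hard part will be the bookkeeping of connected components and the verification that bridges can truly be chosen so that $\icost{K_n}{K_n'}$ attains its expected value without inadvertently merging two components that should remain separate; this requires some care when two touching points $x_i,x_j$ lie in the same component of $K_n$ or of $K'$. The existence of a rectifiable bridge of the claimed length in the Lipschitz domain $\Omega$ is standard. Once the four convergences are established, condition~$\mathbf{<C>}$ follows by plugging $K_n'$ into the $Q$-stability inequality for $(t_n,K_n)$ and passing to the limit, which yields $\ene{t}{K}\le\ene{t}{K'}+\cmd{K}{K'}+Q$ and hence $(t,K)\in\mathscr{S}_{\cmdn}^Q$.
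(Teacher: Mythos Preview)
There is a genuine gap in your control of $\icost{K_n}{K_n'}$. For a connected component $L_i$ of $K'$ meeting $K$, the set $\overline{L_i\setminus K}$ need not be connected: it may split into countably many pieces, each of which touches $K$ (by local connectedness of $L_i$) but none of which need touch $K_n$. Your single bridge $B_{i,n}$ joins $K_n$ to at most one such piece; the remaining pieces become connected components of $K_n'$ disjoint from $K_n$, so $\icost{K_n}{K_n'}>\icost{K}{K'}$. Concretely, take $K=[0,3]\times\{0\}$, $K_n=[0,3]\times\{1/n\}$, and $K'=K\cup(\{1\}\times[-1,0])\cup(\{2\}\times[-1,0])$. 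Then $K'$ is connected, $\icost{K}{K'}=0$, but $\overline{K'\setminus K}$ has two components (the two teeth, both with $y\le 0$, hence disjoint from $K_n$), and with any single bridge your $K_n'$ satisfies $\icost{K_n}{K_n'}\ge 1$ for all large $n$; thus \eqref{rec-seq-2} fails. Replacing the two teeth by infinitely many, say $\{1/k\}\times[-1/k^2,0]$ for $k\ge 1$, still yields a connected $K'\in\tKf$ with $\huno(K')<\infty$, but now $\icost{K_n}{K_n'}=+\infty$. (Note also that your chosen base point $x_i\in L_i\cap K$ need not lie in $\overline{L_i\setminus K}$ at all, in which case the bridge attaches nothing new.)

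The paper's construction (Lemmas~\ref{l:new3.8} and~\ref{l:new3.5}, adapting \cite[Lemmas~3.6--3.8]{DMToa02}) addresses exactly this. For each connected component $J$ of $K'$ meeting $K$ one first produces a \emph{connected} approximation $\widehat J_n\supset K_n$ by invoking \cite[Lemma~3.6]{DMToa02} to connect up the pieces of $K_n$ near each component of $K$; then one attaches the (possibly countably many) connected components $C_\ell$ of $J\setminus\widehat J$ one at a time, each with its own bridge $Z_n^\ell$ to $K_n$, using a diagonal cutoff $\Lambda_n\to\infty$ chosen so that $\sum_{\ell\le\Lambda_n}\huno(Z_n^\ell)\to 0$. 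This keeps $J_n$ connected (hence $\icost{K_n}{J_n}=0$) while the total excess length still vanishes, and the argument for \eqref{rec-seq-3} then goes through. Your idea can be salvaged along these lines---one bridge per component of $L_i\setminus K$ plus the diagonal trick---but as written the claimed equality $\icost{K_n}{K_n'}=\icost{K}{K'}$ is false in general.
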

The \emph{proof} shall be carried out in the upcoming Section  \ref{ss:5.1}. 
\subsection{Proof of Proposition \ref{l:ad-C}}
\label{ss:5.1} 
 Since $\sup_n \calF(t_n,K_n)<+\infty$, we have that $(K_n)\subset \Kmf$ for some $m\geq 1$. 
Along the footsteps of \cite{DMToa02},   first of all we shall 
 exhibit a \REVIS mutual recovery sequence \EEE for a fixed competitor set  $K'=\trial $ 
  that is, additionally, connected, i.e.\ $\trial \in \tKf$, cf.\ the upcoming Lemma \ref{l:new3.8}. Then, in Lemma  \ref{l:new3.5} we will address the general case  in which the competitor set  is in $\Kpf$
  for some $p\geq1$. The proof of Proposition \ref{l:ad-C} will be then carried out at the end of this section.  \EEE
   The proofs of   Lemmas   \ref{l:new3.8}  and  \ref{l:new3.5}  strongly rely on the arguments for \cite[Lemmas 3.8 \& 3.5]{DMToa02}. 
\begin{lemma}
\label{l:new3.8}
Let $m \in \N$, let $(K_n)_n,\, K \in  \tKmf$ 
fulfill $\mathsf{h}(K_n,K) \to 0$ as $n\to\infty$, and let $\trial \in \tKf$ with $\trial \supset K$. Then, there exists a sequence $(\trial_n)_n\subset \tKf$ such that 
$\trial_n \supset K_n$ and 
\begin{subequations}
\label{props-trialn}
\begin{align}
\label{props-trialn-1}
&
\mathsf{h}(\trial_n,\trial) \to 0  && \text{as } n \to \infty,
\\
&
\label{props-trialn-2}
\calH^1(\trial_n {\setminus} K_n) \to \calH^1(\trial{\setminus} K), 
&& \text{as } n \to \infty,
\\
& 
\label{props-trialn-3} 
\int_{\trial_n {\setminus} K_n} \dist x{K_n} \dd \calH^1(x) \to \int_{\trial {\setminus} K} \dist x{K} \dd \calH^1(x) &&  \text{as } n \to \infty\,.
\end{align}
\end{subequations}
\end{lemma}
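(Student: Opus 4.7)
The plan is to follow the construction from \cite[Lemma 3.8]{DMToa02}, which already produces a sequence $(J_n)_n \subset \tKf$ with $J_n \supset K_n$ satisfying (\ref{props-trialn-1}) and (\ref{props-trialn-2}), and then to upgrade the argument to obtain the new integral property (\ref{props-trialn-3}). The key observation enabling the upgrade is the uniform convergence of distance functions $\dist{x}{K_n} \to \dist{x}{K}$ on $\overline\Omega$, with the quantitative $1$-Lipschitz bound $|\dist{x}{K_n} - \dist{x}{K}| \leq \mathsf{h}(K_n, K) \to 0$ holding uniformly in $x$.

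Concretely, after invoking the construction of \cite{DMToa02}, one parametrizes $J \setminus K$ as a finite union of Lipschitz arcs emanating from $K$, and attaches to $K_n$ suitably transported copies of these arcs via short connecting segments whose total $\huno$-length is $o(1)$. This gives $J_n \in \tKf$, $J_n \supset K_n$, together with (\ref{props-trialn-1}) and (\ref{props-trialn-2}). To verify (\ref{props-trialn-3}) I would split
\[
\int_{J_n \setminus K_n} \dist{x}{K_n} \dd \huno(x) = \int_{J_n \setminus K_n} \dist{x}{K} \dd \huno(x) + \int_{J_n \setminus K_n} \big(\dist{x}{K_n} - \dist{x}{K}\big) \dd \huno(x).
\]
The second summand is bounded in absolute value by $\mathsf{h}(K_n, K) \cdot \huno(J_n \setminus K_n)$, which vanishes in the limit by (\ref{props-trialn-2}) and the uniform Lipschitz estimate above. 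For the first summand, the contribution of the short transport segments is $o(1)$ since their total $\huno$-length vanishes and the integrand is bounded by $\mathrm{diam}(\Omega)$; the contribution of the transported arcs converges to $\int_{J \setminus K} \dist{x}{K} \dd \huno(x)$ by the area formula, using the parametric convergence of the arcs of $J_n \setminus K_n$ to those of $J \setminus K$ provided by the construction and the continuity of $\dist{\cdot}{K}$.

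The main obstacle will be extracting from the construction of \cite[Lemma 3.8]{DMToa02} enough structural information on the arcs composing $J_n \setminus K_n$ to justify the parametric convergence used in the last step. Once that structural point is in place, the verification of (\ref{props-trialn-3}) is a routine consequence of the $1$-Lipschitz bound on distance functions, the length convergence (\ref{props-trialn-2}), and the uniform boundedness of the integrand by $\mathrm{diam}(\Omega)$; no additional lower semicontinuity argument (in the style of Proposition \ref{Golab3}) is needed here, since the construction is explicit enough to deliver the matching upper bound directly.
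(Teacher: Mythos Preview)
Your core idea --- replacing $\dist{x}{K_n}$ by $\dist{x}{K}$ via the uniform bound $|\dist{x}{K_n}-\dist{x}{K}|\le \mathsf{h}(K_n,K)$ and then handling the integral of the \emph{fixed} function $\dist{x}{K}$ over $J_n\setminus K_n$ --- is sound and gives a route genuinely different from the paper's. The paper instead works directly with $\dist{x}{K_n}$: it splits $J_n\setminus K_n$ into pieces, uses (reverse) Fatou on the main pieces to get the $\limsup$ inequality, and then invokes the lower semicontinuity Proposition~\ref{Golab3} to supply the matching $\liminf$ bound. Your reduction avoids Proposition~\ref{Golab3} altogether, which is a real simplification.

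That said, your description of the construction from \cite[Lemma~3.8]{DMToa02} is not accurate, and the mismatch matters for your final step. In that construction the arcs making up $J\setminus K$ are \emph{not} transported: the connected components $\Gamma_j$, $C_\ell$ of $J\setminus K$ (possibly countably many, not finitely many) are included in $J_n$ as \emph{fixed} sets, joined to $K_n$ by short arcs of vanishing length; in addition $J_n$ contains connected enlargements $\widehat K_n^l$ of the pieces of $K_n$, with $\huno(\widehat K_n^l\setminus K_n)\to 0$. So there is no ``parametric convergence'' and no area formula to invoke. What you actually need for $\int_{J_n\setminus K_n}\dist{x}{K}\,\dd\huno\to\int_{J\setminus K}\dist{x}{K}\,\dd\huno$ is, after discarding the pieces of vanishing length, to control $\int_{(\bigcup_j\Gamma_j\cup\bigcup_\ell C_\ell)\cap K_n}\dist{x}{K}\,\dd\huno$; this is $\le \mathsf{h}(K_n,K)\cdot\huno(J\setminus K)\to 0$, since on $K_n$ one has $\dist{x}{K}\le\mathsf{h}(K_n,K)$. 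With that correction your argument goes through.
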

\begin{proof}
  
\par
If $K = \emptyset$, it is sufficient to define $\trial_n: = \trial$. Indeed, from $\mathsf{h}(K_n,\emptyset) \to 0$ we deduce that $K_n = \emptyset $ for $n$ sufficiently large,
and then  the convergences properties \eqref{props-trialn} are trivially satisfied.
\par
Let us now assume $K \neq \emptyset$, and let $K^1,\ldots, K^i$, $ 1\leq i \leq m$, be its connected components.
First of all, in Step $1$ we will provide the construction of a \REVIS mutual recovery sequence \EEE with the desired properties \eqref{props-trialn} for a carefully chosen set $\widehat{\trial}$, such that  $\widehat{\trial}$ coincides with $K$, if $K$ is connected,  and $\widehat\trial $ is a suitable  subset of $J$ containing   $K$ (cf.\  \eqref{hat-trial-i}) in the general case. 
\paragraph{\bf Step $1$.}
If $i=1$, let us set 
\begin{subequations}
\label{hat-trial}
\begin{equation}
\label{hat-trial-1}
\widehat{\trial} : = K = K^1.
\end{equation}
 If $ i \geq 2$, we apply  \cite[Lemma 3.7]{DMToa02} to conclude that there exists a finite family of indices $(\sigma_j)_{j=0}^\ell$, with $\{ \sigma_0, \ldots, \sigma_\ell \} = \{ 1, \ldots, i\}$, and a family $(\Gamma_j)_{j=1}^\ell$ of connected components of $\trial \setminus K$, such that 
$K^{\sigma_{j-1}} \cap \overline{\Gamma}_j \neq \emptyset \neq K^{\sigma_{j}} \cap \overline{\Gamma}_j $ for $j=1,\ldots, \ell$, namely $ \overline{\Gamma}_j$ connects 
$K^{\sigma_{j-1}} $ to $K^{\sigma_{j}}.$ In this case, we set
\begin{equation}
\label{hat-trial-i}
\widehat{\trial} : = K  \cup \bigcup_{j=1}^\ell  \overline{\Gamma}_j
\end{equation}
\end{subequations}
 and prove the following

\noindent
{\sl \textbf{Claim}: 
there exists a sequence $(\widehat{\trial}_n)_n \subset \tKf$ such that $\widehat{\trial}_n \supset K_n$ and } 
\begin{subequations}
\label{props-hat-trialn}
\begin{align}
\label{props-hat-trialn-1}
&
\mathsf{h}(\widehat\trial_n,\widehat\trial) \to 0  && \text{as } n \to \infty,
\\
&
\label{props-hat-trialn-2}
\calH^1(\widehat\trial_n {\setminus} K_n) \to \calH^1(\widehat\trial{\setminus} K) 
&& \text{as } n \to \infty,
\\
& 
\label{props-hat-trialn-3} 
\int_{\widehat\trial_n {\setminus} K_n} \dist x{K_n} \dd \calH^1(x) \to \int_{\widehat\trial {\setminus} K} \dist x{K} \dd \calH^1(x) && \text{as } n \to \infty\,.
\end{align}
\end{subequations}

To carry out the construction of the sets $\widehat{\trial}_n$, we proceed in the following way. Given the connected components $(K^l)_{l=1}^i$ of $K$, we choose $\eps>0$ such that the sets $\{ x \in \overline\Omega\, : \ \dist x {K^l} \leq \eps\}$  are pairwise disjoint, and we set
\[
\widetilde{K}_n^l : = \{ x \in K_n\, : \ \dist x{K^l} \leq \eps\}.
\]
Following \cite{DMToa02}, we observe that, for sufficiently large $n$,  we have that
$K_n=\widetilde{K}_n^1  \cup \ldots \cup \widetilde{K}_n^i $, 
$\widetilde{K}_n^l  \in \tKmf$, and 
$\mathsf{h}(\widetilde{K}_n^l, K^l) \to 0$ as $n\to\infty$ for all $l \in \{1,\ldots, i\}$.
We now apply  \cite[Lemma 3.6]{DMToa02}
and for all $l \in \{1, \ldots, i\}$ we  find a sequence $(\widehat{K}_n^l)_n \subset \tKf$ such that $\widehat{K}_n^l \supset \widetilde{K}_n^l$,
\begin{subequations}
\label{tilde2hat}
\begin{equation}
\label{tilde2hat-1}
\mathsf{h}(\widehat{K}_n^l, K^l) \to 0, \quad \text{ and } \quad \calH^1 (\widehat{K}_n^l{\setminus} \widetilde{K}_n^l) \to 0 \qquad \text{as } n \to \infty. 
\end{equation}
Therefore, 
$\mathsf{h}(\widehat{K}_n^l, \widetilde{K}_n^l) \leq \mathsf{h}(\widehat{K}_n^l, K^l) + 
\mathsf{h}(\widetilde{K}_n^l, K^l) \to 0$, as $n\to\infty$. This implies that 
\begin{equation}
\label{tilde2hat-2}
\int_{\widehat{K}_n^l{\setminus} \widetilde{K}_n^l} \dist x{ \widetilde{K}_n^l} \dd \calH^1(x) \to 0  \qquad \text{as } n \to \infty. 
\end{equation}
\end{subequations}
\par
In the case $i=1$ (namely, $K = K^1 \in \tKf$), we define 
$
\widehat\trial_n : = \widehat{K}_n^1 \in \tKf$.  Then, properties \eqref{props-hat-trialn} are satisfied: indeed, 
 in this case
$\widehat\trial = K$, so that the first of \eqref{tilde2hat-1} yields \eqref{props-hat-trialn-1}. 
Furthermore, 
$\widetilde{K}_n^1 : = \{ x \in K_n\, : \ \dist x{K^1}  =  \dist x{K}  \leq \eps\} $ coincides with $K_n$ for $n$ large enough.
Therefore,
$\widehat\trial_n \supset K_n$ and
$\calH^1(\widehat\trial_n{\setminus}K_n) = \calH^1(\widehat{K}_n^1{\setminus}\widetilde{K}_n^1) \to 0$ by the second of \eqref{tilde2hat-1}. 
 Property \eqref{props-hat-trialn-3} then follows from  \eqref{tilde2hat-2}, as $\widehat\trial\setminus K = \emptyset$.  
\par
Suppose now that $K$ is not connected, namely $i\geq 2$. Then, the set $\widehat\trial$ is given by \eqref{hat-trial-i}. For every $j=1,\ldots, \ell$, we fix $x^j \in K^{\sigma_{j-1}} \cap \overline{\Gamma}_j $ and $y^j \in K^{\sigma_{j}} \cap \overline{\Gamma}_j $. Since $\mathsf{h}(\widehat{K}_n^l, K^l) \to 0$ as $n\to\infty$ for all $l \in \{1,\ldots, i\}$, we have that there exist sequences $(x_n^j)_n,\, (y_n^j)_n$ with $x_n^j \in \widehat{K}_n^{\sigma_{j-1}}$ and $y_n^j \in \widehat{K}_n^{\sigma_{j}}$ for all $n\in \N$, such that 
$x_n^j \to x^j$ and $y_n^j \to y^j$ as $n\to\infty$. Since $\Omega$ has a Lipschitz boundary, there exist arcs $X_n^j$ and $Y_n^j$ in $\overline\Omega$, connecting $x_n^j$ to $x^j$ and 
$y_n^j$ to $y^j$, respectively, such that 
$\calH^1(X_n^j) \to 0$ and $\calH^1(Y_n^j)\to 0$ as $n\to\infty$.
We set for every $n\in \N$
\begin{equation}
\label{def-wht-notconn}
\widehat\trial_n : = \bigcup_{l=1}^i \widehat{K}_n^l  \cup  \bigcup_{j=1}^\ell X_n^j   \cup  \bigcup_{j=1}^\ell \overline{\Gamma}_j 
 \cup  \bigcup_{j=1}^\ell Y_n^j \,.
\end{equation}
It has been shown in the proof of \cite[Lemma  3.8]{DMToa02} that $\widehat\trial_n \in \tKf$ for sufficiently large  $n$, and that  \eqref{props-hat-trialn-1} and  
 \eqref{props-hat-trialn-2} hold. 
   It remains to check \eqref{props-hat-trialn-3}. With this aim, we observe that by \eqref{def-wht-notconn} we have 
\[
\begin{aligned}
\int_{\widehat\trial_n {\setminus} K_n} \dist x{K_n} \dd \calH^1(x)  &  = \int_{\widehat\trial_n} \dist x{K_n} \dd \calH^1(x)  \\ &  = 
\sum_{l=1}^i \int_{\widehat{K}_n^l} \dist x{K_n} \dd \calH^1(x) + \sum_{j=1}^\ell \int_{X_n^j} \dist x{K_n} \dd \calH^1(x)
\\ & \quad 
+ \sum_{j=1}^\ell \int_{\Gamma_j} \dist x{K_n} \dd \calH^1(x)
+ \sum_{j=1}^\ell \int_{Y_n^j} \dist x{K_n} \dd \calH^1(x) 
\\ & \doteq S_n^1+S_n^2+S_n^3+S_n^4
\end{aligned}
\]
(where the integrals are taken over $\Gamma_j $ since  $\calH^1(\overline{\Gamma}_j)
=\calH^1(\Gamma_j)$ by \cite[Prop.\ 2.5]{DMToa02}).
As for the first summand, observe that  
\[
\int_{\widehat{K}_n^l} \dist x{K_n} \dd \calH^1(x)  \leq  
\int_{\widehat{K}_n^l} \dist x{\widetilde{K}_n^l} \dd \calH^1(x) \longrightarrow  0 \text{ as } n \to\infty  \quad \text{for every $l =1,\ldots, i$,}
\]
 where the inequality is due to the fact that $K_n=\widetilde{K}_n^1  \cup \ldots \cup \widetilde{K}_n^i \supset \widetilde{K}_n^l$,  while the convergence to $0$ is proved in  \eqref{tilde2hat-2}. Therefore,  $S_n^1 \to 0$ as $n\to\infty$.
 We trivially estimate
 \[
 \int_{X_n^j} \dist x{K_n} \dd \calH^1(x) \leq \mathrm{diam}(\Omega) \cdot \calH^1(X_n^j) \to 0 \text{ as } n \to\infty  \quad \text{for all } j=1,\ldots, \ell,
 \]
and we handle the terms $ \int_{Y_n^j} \dist x{K_n} \dd \calH^1(x)$ in the same way.
We thus conclude that 
 $S_n^2 \to 0$  and  $S_n^4 \to 0$ as $n\to\infty$.
 Finally, we observe that
  \[
 \limsup_{n\to\infty} \int_{\Gamma_j} \dist x{K_n} \dd \calH^1(x)
\leq  \int_{\Gamma_j}  \limsup_{n\to\infty} \dist x{K_n} \dd \calH^1(x)
\leq  \int_{\Gamma_j} \dist x{K} \dd \calH^1(x)  \quad \text{for all } j=1,\ldots, \ell,
 \]
where the first inequality follows from the Fatou Lemma, and the second one is a straightforward consequence of  the fact that $K_n\to K$ w.r.t.\ the Hausdorff distance. All in all, we conclude that 
\[
\begin{aligned}
\limsup_{n\to\infty} \int_{\widehat\trial_n {\setminus} K_n} \dist x{K_n} \dd \calH^1(x) 
\leq  \limsup_{n\to\infty} \sum_{j=1}^\ell \int_{\Gamma_j} \dist x{K_n} \dd \calH^1(x)
 & = \sum_{j=1}^\ell  \int_{\Gamma_j} \dist x{K} \dd \calH^1(x) 
 \\ & 
=  \int_{\widehat\trial{\setminus}K} \dist x{K} \dd \calH^1(x),  
\end{aligned}
\]
where the last equality is  due to \eqref{hat-trial-i}.
Then, \eqref{props-hat-trialn-3} ensues, since 
$\liminf_{n\to\infty} \int_{\widehat\trial_n {\setminus} K_n} \dist x{K_n} \dd \calH^1(x) $ is estimated from below by 
$\int_{\widehat\trial{\setminus}K} \dist x{K} \dd \calH^1(x)$ thanks to   Proposition \ref{Golab3}.
\paragraph{\bf Step $2$.} 
Let us now carry out the construction of the  \REVIS mutual recovery sequence \EEE $(\trial_n)_n$ for a given 
$\trial \in \tKf$ with $\trial \supset K$. 
Let $\widehat\trial$ be the set introduced in \eqref{hat-trial}. Observe that $\trial$
is locally connected (see \cite[Lemma 1]{Chambolle-Doveri}), hence the connected components of $\trial \setminus \widehat\trial$ are open in the relative topology of $\trial$. Therefore, 
since $\trial$ is separable,  $\trial \setminus \widehat\trial$ 
has at most countably many connected components  $(C_\ell)_{\ell \in L}$,  with $L$ a finite or an infinite subset of $\N$. 
It follows from the proof of \cite[Lemma 3.7]{DMToa02} that each component $C_\ell$ is open in $\trial$ and satisfies $\overline{C}_\ell \cap K\neq \emptyset$. Let us fix a point $z^\ell \in \overline{C}_\ell \cap K$ for every $\ell\in L$. From $\mathsf{h}(K_n,K) \to 0$ as $n\to\infty$ we deduce that there exists a sequence $ (z_n^\ell)_n$ such that $
z_n^\ell  \in K_n$ for all $n\in \N$ and $z_n^\ell \to z^\ell$ as $n\to\infty$. Since $
\Omega $ is Lipschitz, for every $\ell \in L$ there exists an arc $Z_n^\ell \subset\overline\Omega$ connecting $z_n^\ell$ to $z_\ell$, and such that $\calH^1(Z_n^\ell) \to 0$ as $n\to\infty$.
Finally, along the footsteps of \cite{DMToa02}  we observe that there exists a sequence $(\Lambda_n)_n \subset \N$ such that 
\[
\lim_{n\to\infty} \sum_{\ell=1}^{\Lambda_n} \calH^1(Z_n^\ell)  =0
\]
(in fact, if the set $L$ consists of $1\leq \Lambda<+\infty$ elements, then we take $\Lambda_n = \Lambda$).
\par
We claim that the sequence
\begin{equation}
\label{final-definition}
\trial_n: = \widehat{\trial}_n \cup \bigcup_{\ell=1}^{\Lambda_n} Z_n^\ell \cup \bigcup_{\ell=1}^{\Lambda_n}
\overline{C}_\ell
\end{equation}
complies with \eqref{props-trialn}. In fact, it is sufficient to check \eqref{props-trialn-3}, as \eqref{props-trialn-1} and \eqref{props-trialn-2} have been proved in \cite[Lemma 3.8]{DMToa02}.
With this aim, we observe that 
\[
\begin{aligned}
&
\int_{\trial_n {\setminus} K_n} \dist x{K_n} \dd \calH^1(x)  \\  &  = 
\int_{\widehat{\trial}_n} \dist x{K_n} \dd \calH^1(x) + \sum_{\ell=1}^{\Lambda_n} 
\int_{Z_n^\ell}\dist x{K_n} \dd \calH^1(x)  +  \sum_{\ell=1}^{\Lambda_n} \int_{C_\ell}\dist x{K_n} \dd \calH^1(x)   =: S_5^n+S_6^n+S_7^n\,.
\end{aligned}
\]
It follows from \eqref{props-hat-trialn-3} that 
\[
\limsup_{n\to\infty} 
S_5^n
\leq\int_{\widehat\trial {\setminus} K} \dist x{K} \dd \calH^1(x) \,.
\]
We estimate
\[
S_6^n \leq \mathrm{diam}(\Omega) \sum_{\ell=1}^{\Lambda_n}  \calH^1(Z_n^\ell) \to 0 \qquad \text{as } n \to \infty.
\]
Finally, we observe that 
\[
\begin{aligned}
\limsup_{n\to\infty} S_7^n  = \limsup_{n\to\infty}  \sum_{\ell=1}^{\Lambda_n} \int_{C_\ell}\dist x{K_n} \dd \calH^1(x)  &  \leq \limsup_{n\to\infty} \int_{\bigcup_{\ell \in L}C_\ell}\dist x{K_n} \dd \calH^1(x) \\ &   \leq \int_{\bigcup_{\ell \in L}C_\ell}   \limsup_{n\to\infty} \dist x{K_n} \dd \calH^1(x) \\ & \leq \int_{\bigcup_{\ell \in L}C_\ell}   \dist x{K} \dd \calH^1(x), 
\end{aligned}
\]
again by the Fatou Lemma and the fact that  $\mathsf{h}(K_n,K)\to 0$ as $n\to\infty$. 
All in all,  we conclude that 
\[
\begin{aligned}
\limsup_{n\to\infty} \int_{\trial_n {\setminus} K_n} \dist x{K_n} \dd \calH^1(x)   
 & \leq \int_{\widehat\trial {\setminus} K} \dist x{K} \dd \calH^1(x) +  \int_{\bigcup_{\ell \in L}C_\ell}   \dist x{K} \dd \calH^1(x) 
\\ & = \int_{\trial {\setminus} K} \dist x{K} \dd \calH^1(x) \,,
\end{aligned}
\]
namely, an inequality in \eqref{props-trialn-3}. The converse inequality follows from Proposition \ref{Golab3}.
This concludes the proof \REVIS of Lemma \ref{l:new3.8}. \EEE
\end{proof}
\par
As in \cite[Lemma 3.5]{DMToa02}, we now extend the construction of the \REVIS mutual \EEE recovery  sequence to the case  the `competitor set' $\trial$ has at most $p$ connected components, with $p\geq 1$.
\begin{lemma}
\label{l:new3.5}
Let $m,\, p \in \N, \, m,\, p \geq 1$, let $(K_n)_n,\, K \in  \tKmf$ fulfill $\mathsf{h}(K_n,K) \to 0$ as $n\to\infty$, and let $K' \in \tKpf$ with $K' \supset K$. 
Then, there exists a sequence $(K_n')_n\subset \tKpf$ such that
$K_n' \supset K_n$ and 
properties \eqref{rec-seq-better}  hold. 
\end{lemma}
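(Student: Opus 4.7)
The strategy is to reduce to the connected-competitor case of Lemma \ref{l:new3.8}, adapting the argument of \cite[Lemma 3.5]{DMToa02}. The new ingredient compared to that reference is the distance-weighted convergence \eqref{rec-seq-3}, which must be tracked alongside the Hausdorff and $\calH^1$ convergences already handled there.

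First I would decompose $K'$ into its finitely many connected components $K'^1,\ldots,K'^q$ with $q\le p$, and partition their indices into $I:=\{\ell:K'^\ell\cap K\neq\emptyset\}$ and $I^c$. For $\ell\in I$ set $K^\ell:=K'^\ell\cap K$. Since each connected component of $K$ is contained in exactly one $K'^\ell$, the family $\{K^\ell\}_{\ell\in I}$ is a disjoint partition of $K$ and each $K^\ell\in\tKmf$. Choose $\eps>0$ so small that the closed $\eps$-neighborhoods of $\{K^\ell\}_{\ell\in I}\cup\{K'^j\}_{j\in I^c}$ are pairwise disjoint, and define $\widetilde K_n^\ell:=\{x\in K_n:\dist{x}{K^\ell}\le \eps\}$ for $\ell\in I$. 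From $\mathsf h(K_n,K)\to 0$, for $n$ large the $\widetilde K_n^\ell$ partition $K_n$, each $\widetilde K_n^\ell\in\tKmf$, and $\widetilde K_n^\ell\wsigma K^\ell$.

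Second, for every $\ell\in I$ I would apply Lemma \ref{l:new3.8} to $(\widetilde K_n^\ell)_n$ converging to $K^\ell$, with the connected competitor $K'^\ell\in\tKf$, obtaining $(J_n^\ell)_n\subset\tKf$ with $J_n^\ell\supset\widetilde K_n^\ell$, $J_n^\ell\wsigma K'^\ell$, $\calH^1(J_n^\ell\setminus\widetilde K_n^\ell)\to\calH^1(K'^\ell\setminus K^\ell)$, and the distance-weighted convergence \eqref{props-trialn-3}. For $\ell\in I^c$ I would simply set $J_n^\ell:=K'^\ell$, which by $\eps$-separation is disjoint from $K_n$ for $n$ large. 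Finally define $K_n':=\bigcup_{\ell=1}^q J_n^\ell$; the $J_n^\ell$ are pairwise disjoint (for $n$ large) and each is connected, so $K_n'\in\tKpf$ and $K_n'\supset K_n$.

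Third, I would verify \eqref{rec-seq-better}. The Hausdorff convergence \eqref{rec-seq-0} follows from the finite disjoint union, and \eqref{rec-seq-1} is then an application of Proposition \ref{prop:continuity-minimizers} under the ambient hypothesis $t_n\to t$ of Proposition \ref{l:ad-C}. Property \eqref{rec-seq-2} is immediate from additivity of $\calH^1$ on the disjoint pieces $J_n^\ell\setminus\widetilde K_n^\ell$. The crucial point is \eqref{rec-seq-3}: on each piece $J_n^\ell\setminus\widetilde K_n^\ell$ with $\ell\in I$, the $\eps$-separation forces the nearest point of $K_n$ to lie in $\widetilde K_n^\ell$, so that $\dist{x}{K_n}=\dist{x}{\widetilde K_n^\ell}$ and Lemma \ref{l:new3.8}\eqref{props-trialn-3} applies piecewise; for $\ell\in I^c$, the $\eps$-separation combined with $\mathsf h(K_n,K)\to 0$ yields $\dist{\cdot}{K_n}\to\dist{\cdot}{K}$ uniformly on $K'^\ell$, whence $\int_{K'^\ell}\dist{x}{K_n}\dd\calH^1\to\int_{K'^\ell}\dist{x}{K}\dd\calH^1$. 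Summing over $\ell$ delivers \eqref{rec-seq-3}. The main technical obstacle will be ensuring that the $\eps$-separation chosen for the limit sets transfers uniformly in $n$ large to the approximations, so that on every piece $J_n^\ell$ one may genuinely replace $\dist{\cdot}{K_n}$ by $\dist{\cdot}{\widetilde K_n^\ell}$ without incurring error from the other components of $K_n$; this decoupling is precisely what allows the piecewise application of Lemma \ref{l:new3.8} to combine into the required global convergence.
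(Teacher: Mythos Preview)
Your strategy is the same as the paper's: decompose $K'$ into connected components, partition $K_n$ accordingly, apply Lemma~\ref{l:new3.8} on each piece, and take the union. (The paper partitions $K_n$ by proximity to the $K'^\ell$ rather than to the $K^\ell$, but for $n$ large this yields the same pieces.) Two points require attention.

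First, in \eqref{rec-seq-2} and \eqref{rec-seq-3} you handle only the $\calH^1$- and $\ATWname$-parts, omitting the $\lambda\icostname$ (resp.\ $\mu\icostname$) contributions to $\mathsf d$ and $\delta$. This is easy to supply from your construction---for $n$ large the connected components of $K_n'$ disjoint from $K_n$ are exactly the $K'^\ell$ with $\ell\in I^c$, so $\icost{K_n}{K_n'}=|I^c|=\icost{K}{K'}$---and the paper does make this argument explicit.

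Second, your key decoupling claim that for $x\in J_n^\ell$ the nearest point of $K_n$ lies in $\widetilde K_n^\ell$, whence $\dist{x}{K_n}=\dist{x}{\widetilde K_n^\ell}$, is \emph{false} in general. Your $\eps$-separation governs the sets $K^\ell$, but $J_n^\ell\to K'^\ell$ may extend far beyond the $\eps$-neighborhood of $K^\ell$: take $K=\{(0,0),(10,0)\}$, $K'^1=[0,9]\times\{0\}$, $K'^2=\{(10,0)\}$; then $(9,0)\in K'^1$ is at distance $1$ from $K^2$ but $9$ from $K^1$. So the piecewise convergence \eqref{props-trialn-3}, which concerns $\dist{\cdot}{\widetilde K_n^\ell}$ and $\dist{\cdot}{K^\ell}$, does not directly reassemble into \eqref{rec-seq-3}, which requires $\dist{\cdot}{K_n}$ and $\dist{\cdot}{K}$. (The paper's displayed chain at this step has the same defect: the asserted equality $\sum_\ell\int_{K'^\ell\setminus K^\ell}\dist{x}{K^\ell}\,d\calH^1=\int_{K'\setminus K}\dist{x}{K}\,d\calH^1$ fails in the example above.) A correct route is to go back to the \emph{construction} in the proof of Lemma~\ref{l:new3.8}: each $J_n^\ell\setminus\widetilde K_n^\ell$ splits into a fixed part contained in $K'^\ell\setminus K^\ell$ together with pieces of vanishing $\calH^1$-measure; integrate $\dist{\cdot}{K_n}$ directly, use the uniform convergence $\dist{\cdot}{K_n}\to\dist{\cdot}{K}$ on the fixed part and the trivial diameter bound on the vanishing part to get the $\limsup$ inequality, and then close with Proposition~\ref{Golab3} for the $\liminf$.
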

\begin{proof}
As in the proof of  \cite[Lemma 3.5]{DMToa02},
we consider the connected components $\trial^1,\ldots,\trial^i$, $1\leq i\leq p$, of the set $K'$, we fix $\eps>0$ such that the sets 
$\{ x\in \overline\Omega\, : \ \dist x{\trial^l} \leq \eps\}$, $l \in \{1, \ldots, i \}$, are pairwise disjoint, and we define
\[
\widehat{K}_n^l := \{ x \in K_n\, : \dist x{\trial^l} \leq \eps \}, \qquad l \in \{1, \ldots, i \}\,.
\] 
Following \cite{DMToa02}, we observe that, 
for $n $ large enough, 
the sets $\widehat{K}_n^l$ are in $\tKmf$, 
$K_n = \bigcup_{l=1}^i \widehat{K}_n^l $, and  $\mathsf{h}(\widehat{K}_n^l,K^l) \to 0$ as $n\to\infty$, with $K^l :=  K \cap \trial^l$.
If $K^l = \emptyset$, we set $\trial_n^l \equiv \trial^j$ for all $n \in \N$. If $K^l \neq \emptyset$, 
 we apply Lemma \ref{l:new3.8} to the connected sets $\trial^l$ and to the sequences  $(\widehat{K}_n^l)_n$, $l \in \{1, \ldots, i \}$, and for each $l\in \{1,\ldots, i\}$ we  find a sequence $(\trial_n^l)_n \subset \tKf$ such that 
\begin{equation}
\label{appl-lemma-l}
\begin{aligned}
&
\trial_n^l \supset \widehat{K}_n^l, \quad \mathsf{h}(\trial_n^l,\trial^l ) \to 0, \quad   \calH^1(\trial_n^l{\setminus} \widehat{K}_n^l ) \to \calH^1(\trial^l {\setminus} K^l), \\ &  \limsup_{n\to\infty}\int_{\trial_n^l{\setminus} \widehat{K}_n^l } \dist x{\widehat{K}_n^l} \dd \calH^1(x)  \leq \int_{\trial^l{\setminus} K^l } \dist x{K^l} \dd \calH^1(x)\,.
\end{aligned}
\end{equation}
Note that, for $n$ large enough, the sets $(J_n^l)_{l=1}^i$ are pairwise disjoint. 
\par
 Then, we define the \REVIS mutual \EEE recovery sequence $(K_n')_n$ for the set $K'$ in this way:
 \[
 K_n' : = \trial_n^1 \cup  \ldots \cup\trial_n^i\,.
 \]
By construction $K_n' \supset \bigcup_{l=1}^i   \widehat{K}_n^l  =K_n$ and $\mathsf{h}(K_n',K')\to 0$ as $n\to\infty$,
namely \eqref{rec-seq-0} holds. 
Then, \eqref{rec-seq-1} follows from \eqref{rec-seq-0} and Proposition \ref{l:ad-A}. 
Furthermore,
\begin{equation}
\label{combo-1}
\begin{aligned}
\calH^1(K_n'{\setminus}K_n)  & = \calH^1( \bigcup_{l=1}^i \trial_n^l {\setminus} \bigcup_{j=1}^i   \widehat{K}_n^j ) 
=\calH^1( \bigcup_{l=1}^i  ( \trial_n^l {\setminus}\widehat{K}_n^l ) )\\
&
\leq \sum_{l=1}^i \calH^1(\trial_n^l {\setminus}\widehat{K}_n^l ) \longrightarrow  \sum_{l=1}^i  \calH^1(\trial^l {\setminus} K^l) =   \calH^1(K' \setminus K)\,.
\end{aligned}
\end{equation}
where the second equality follows from the fact that $ \trial_n^l  \setminus \widehat{K}_n^j =\trial_n^l$ for
$l\neq j$, 
 analogously, we have $\trial^l {\setminus} K^j =\trial^l$ for $l \neq j$, which gives the very last equality. 
 \par
Now, we calculate
 $
 \icost{K_n}{K_n'} = \icost{\bigcup_{l=1}^i   \widehat{K}_n^l}{\bigcup_{l=1}^i \trial_n^l},
 $
 namely the number of connected components  $\Lambda $ of $\bigcup_{l=1}^i \trial_n^l$  such that $\Lambda \cap \bigcup_{l=1}^i   \widehat{K}_n^l = \emptyset $. 
Since for $n$ large enough the sets $\trial_n^l$, $l=1, \ldots, i$,  are connected and pairwise disjoint, 
each  $\Lambda$ must coincide with a set  $\trial_n^{\bar l}$, for some $\bar l \in \{1,\ldots, i\}$, that 
fulfills
$\trial_n^{\bar l} \cap  \widehat{K}_n^l = \emptyset$ for every $l \in \{1,\ldots, i\}$. 
 Recall that,
 for $n$ sufficiently large,
 $\trial_n^{\bar l} \cap  \widehat{K}_n^{l} = 
  \trial^{\bar l} \cap K^{l} = \emptyset $ for $l \neq \bar l$,  and that,
$\trial_n^{\bar l} \cap  \widehat{K}_n^{\bar l}  = \emptyset $ if and only if $ \trial^{\bar l} \cap K^{\bar l} = \emptyset $. 
 Then, we easily conclude that 
\[
 \icost{K_n}{K_n'} = \icost{\bigcup_{l=1}^i   K^l}{\bigcup_{l=1}^i \trial^l} = \icost{K}{K'} \qquad \text{for $n$ large enough.} 
\]
 Thus, we infer the validity of property \eqref{rec-seq-2}. 
 \par
With the same arguments we find that 
\[
\begin{aligned}
&
\limsup_{n\to\infty}\int_{K_n'{\setminus} \widehat{K}_n } \dist x{\widehat{K}_n^l} \dd \calH^1(x)  
 = \limsup_{n\to\infty} \sum_{l=1}^i \int_{\trial_n^l{\setminus} \widehat{K}_n^l } \dist x{\widehat{K}_n^l} \dd \calH^1(x)
 \\
  &   
\leq  \sum_{l=1}^i  \int_{\trial^l{\setminus} K^l }
 \dist x{K^l} \dd \calH^1(x)
=
\int_{K'{\setminus} K } \dist x{K} \dd \calH^1(x), 
\end{aligned}
\]
where the last inequality follows from \eqref{appl-lemma-l}. This
 gives an inequality in \eqref{rec-seq-3}; the converse one is again due to Proposition \ref{Golab3}. 
This concludes the proof.
\end{proof}
\begin{proof}[Proof of Proposition \ref{l:ad-C}]
By \eqref{sole-consequence} and by Theorem \ref{Golab2}(i), the set $K$ belongs to $\Kmf$ for some $m$. Since $\lambda\icost{K}{K'} \leq \mathsf{d}(K,K')<+\infty$, we have that $K' \in \Kpf$ with $p = m+ \icost{K}{K'}$. Then, the conclusion follows from Lemma \ref{l:new3.5}. 
\end{proof}

\subsection{Proofs}
\label{s:proof}
In this section we prove  Theorem \ref{thm:existVEcrack}. 
\medskip
\begin{proof}[Proof of Theorem  \ref{thm:existVEcrack}]
  The first and the last statement follow  from the general existence result \cite[Thm.\ 3.9]{SavMin16}, which applies to the system $\CRIS$ for brittle fracture thanks to the validity of conditions  $\mathbf{<A>}$, $\mathbf{<B>}$, and  $\mathbf{<C>}$,
  proved in Propositions \ref{l:ad-A}, \ref{l:ad-B}, and  \ref{l:ad-C}. 
  
  \par
We now prove  the explicit bound \eqref{bound-connected-components} on the number $m$ of connected components. Indeed,  \REVISAGAIN
from the energy-dissipation balance \eqref{enbal-VE}
  we infer that 
\begin{equation}
\label{2Gronw}
\Jvar {\vecostname}{K}{0}{t}
\leq
\ene t{K(t)}  + \huno(K(t){\setminus}K(0)) 
+  \Jvar {\vecostname}{K}{0}{t}
\leq 
 \ene 0{K_0}+\int_0^t  C_P(\ene s{K(s)}{+}1) \dd s,
 \end{equation}
\EEE where the last inequality follows from 
 \eqref{power}. Then, by the Gronwall Lemma we infer that 
 \[
 \ene t{K(t)} \leq \left( \ene 0{K_0}{+}1\right) \exp(C_Pt) -1\qquad \text{for all } t \in [0,T].
 \]
Inserting the above estimate in \eqref{2Gronw} yields
 \[
\REVISAGAIN  (\lambda{+}\mu)\Vari{\icostname} K0t \leq
   \Jvar {\vecostname}{K}{0}{t} \leq 
    \left( \ene 0{K_0}{+}1\right) \exp(C_P T) -1,
 \]
where the first inequality is a consequence of \eqref{eq:3}.  Since
$\alpha(K(0), K(t)) \leq \mathrm{Var}_{\icostname}(K, [0,t]) $, we ultimately find 
\[
\alpha(K(0), K(t))  \leq \frac1{\lambda{+}\mu}  \left( \ene 0{K_0}{+}1\right) \exp(C_P T), 
\]
and  \eqref{bound-connected-components}  follows by  Lemma \ref{l:key}. \EEE
This concludes the proof of Theorem \ref{thm:existVEcrack}. 
\end{proof}

\section{Behavior near the crack tips}
\label{s:6}
In the same spirit of \cite[Sec.\ 8]{DMToa02}, in this section we describe the \emph{singularity} at the crack tips of the displacement $u(t)$ associated with a  $\VE$ solution $K(t)$ 
to the system $\CRIS$. This will be examined in 
 an interval $(\tau_0,\tau_1)$ during which $K$ evolves \emph{continuously} as a function of time. Furthermore, along the footsteps of \cite{DMToa02},  we   confine the discussion to the case in which the  (moving part of the)  crack set consists of a \emph{finite} family of simple arcs, whose endpoints are the moving tips of the crack, as specified in Hypothesis \ref{h:griffith} below. 
 In Theorem \ref{th:griffith} below we will show that the $\VE$ solution $K$ complies with Griffith's criterion for crack growth.
\par
Let us specify the structural condition on the crack  $K\colon [0,T]\to \Xamb$. 
\begin{hypothesis}
\label{h:griffith}
We suppose that $K\colon [0,T]\to \Xamb$ fulfills the following condition  on some  $(\tau_0, \tau_1) \subset [0,T]$: there exists a finite family
 $(\Gamma_i)_{i=1}^p$ of  arcs  contained in $\Omega$ and parameterized by 
arc length by $\rmC^2$  bijective  functions $\gamma_i\colon  [\sigma_i^0,\sigma_i^1] \to  \Gamma_i$ such that 
\begin{equation}
\label{structure-condition}
K(t) = K(\tau_0) \cup \bigcup_{i=1}^p \Gamma_i(\sigma_i(t)) \qquad \text{for all } t \in (\tau_0,\tau_1),
\end{equation}
where, for $i=1,\ldots, p$,  $\sigma_i \colon  [\tau_0,\tau_1] \to [\sigma_i^0,\sigma_i^1]$ are non-decreasing  continuous  functions such that 
$\sigma_i(\tau_0)=\sigma_i^0$ and $\sigma_i^0<\sigma_i(t) <\sigma_i^1$, while
$
\Gamma_i(\sigma) = \{ \gamma_i(s)\, : \ \sigma_i^0 \leq s \leq \sigma\}$.  We also assume that the arcs $(\Gamma_i)_{i=1}^p$ are pairwise disjoint, and that $
\Gamma_i \cap K(t_0) = \{ \gamma_i(\sigma_i^0)\} $ for every $i=1,\ldots, p$. 
\end{hypothesis}
\noindent
Hence, for  $ t\in (\tau_0,\tau_1) $ the fracture grows along the branches $\Gamma_i$, $i=1,\ldots, p$, and the points $\gamma_i(\sigma_i(t))$ are the moving crack  tips. 
The compliance with Griffith's  criterion stated in Theorem \ref{th:griffith} ahead will be expressed in terms of conditions involving the \emph{stress intensity factors}  of the displacements $u(t)$ at the crack tips. We briefly recall 
some preliminary facts about this notion.
\smallskip

\paragraph{\bf Basics on the stress intensity factor.}
\  The   notion of stress intensity factor is based on  the following result.
\begin{proposition}
Let $B\subset \R^2$ be an open ball, and let $\serifgamma\colon  [\serifsigma_0,\serifsigma_1]\to \R^2$ be a simple path of class $\rmC^2$ parameterized by arc length, such that $\serifgamma(\serifsigma_0) \in \partial B$, $\gamma(\serifsigma_1) \in \partial B$, and $\serifgamma(\serifsigma) \in B$ for all $\serifsigma \in (\serifsigma_0,\serifsigma_1)$. In addition, assume that $\serifgamma$ is not tangent to $\partial B$ at $\serifsigma_0$ and $\serifsigma_1$. 
\par
Given $\serifsigma \in (\serifsigma_0,\serifsigma_1)$, let $\serifGamma (\serifsigma):  = \{\serifgamma(\mathsf{s})\, : \ \serifsigma_0 \leq s \leq \serifsigma\}$ and let $\serifu \in L^{1,2}(B{\setminus}\serifGamma (\serifsigma)) $ satisfy
\[
\int_{B{\setminus}\serifGamma (\serifsigma)} \nabla \serifu \cdot \nabla z \dd x =0 \qquad \text{for all } z \in L^{1,2}(B{\setminus}\serifGamma (\serifsigma)) \text{  with } z=0 \text{  on } \partial B {\setminus}\serifGamma (\serifsigma)\,.
\]
Then, there exists a unique constant $\kappa = \kappa (\serifu,\serifsigma) \in \R$ such that
\begin{equation}
\label{definition-kappa}
\serifu -2\kappa\sqrt{\rho/\pi}\sin (\theta/2) \in H^2(B{\setminus}\serifGamma (\serifsigma)) \cap H^{1,\infty} (B{\setminus}\serifGamma (\serifsigma))\,,
\end{equation}
where $\rho(x) = |x-\serifgamma(\serifsigma)|$ and $\theta(x)$ is the continuous function on $B{\setminus}\serifGamma(\serifsigma)$ that coincides with the oriented angle between $\dot{\serifgamma}(\serifsigma)$ and $x-\serifgamma(\serifsigma)$, and vanishes on the points of the form $x=\serifgamma(\serifsigma)+\eps \dot{\serifgamma}(\serifsigma)$  for $\eps>0$ small enough. 
\end{proposition}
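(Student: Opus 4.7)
The plan is to reduce the statement to the classical singular expansion of Neumann-harmonic functions on a domain with a straight slit, by straightening the arc near the crack tip via a local $C^2$ diffeomorphism. First I would note that away from the tip $\serifgamma(\serifsigma)$, the function $\serifu$ is smooth: on $B \setminus \overline{\serifGamma(\serifsigma)}$ it is classically harmonic, and near a point of $\serifGamma(\serifsigma) \setminus \{\serifgamma(\serifsigma_0), \serifgamma(\serifsigma)\}$ the function $\serifu$ satisfies a homogeneous Neumann problem across a $\rmC^2$ arc, so by standard elliptic regularity (with reflection across the arc) $\serifu \in H^2 \cap H^{1,\infty}$ locally. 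Hence the singular behavior can only be localized at the two endpoints $\serifgamma(\serifsigma_0)$ and $\serifgamma(\serifsigma)$. Near $\serifgamma(\serifsigma_0)\in \partial B$, transversality to $\partial B$ plus the usual Neumann regularity for mixed conditions on $\rmC^2$ data gives again $H^2 \cap H^{1,\infty}$ locally.

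The core of the argument is therefore a purely local assertion around the crack tip $x_0 = \serifgamma(\serifsigma)$. I would choose $r>0$ small enough that $\serifGamma(\serifsigma) \cap \overline{B(x_0,r)}$ is a single $\rmC^2$ arc ending at $x_0$, and use a $\rmC^2$ diffeomorphism $\Phi$ that maps $B(x_0,r)$ onto a neighborhood $U$ of the origin and sends this terminal arc onto the segment $\{(s,0) : -r' \le s \le 0\}$, with $\rmD\Phi(x_0)$ being the rotation that carries $\dot\serifgamma(\serifsigma)$ to $(1,0)$. Since $\Phi$ is $\rmC^2$ with non-vanishing Jacobian, the transformed function $\tilde u = \serifu \circ \Phi^{-1}$ belongs to $L^{1,2}(U \setminus \{(s,0): s\le 0\})$, and the pullback of the weak equation for $\serifu$ becomes a divergence-form homogeneous elliptic equation with $\rmC^1$ coefficients, subject to natural (zero co-normal) boundary conditions on the slit.

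At this point I would invoke the classical corner-singularity theorem à la Grisvard / Kondratiev for the straight slit: any weak solution of such an equation near the crack tip admits the decomposition
\begin{equation*}
\tilde u(\rho,\theta) = \tilde\kappa \, 2\sqrt{\rho/\pi}\,\sin(\theta/2) + \tilde u_{\mathrm{reg}},
\end{equation*}
with $\tilde u_{\mathrm{reg}} \in H^2 \cap H^{1,\infty}$ locally, and $\tilde\kappa \in \R$ uniquely determined. Pulling back through $\Phi^{-1}$, and using that $\Phi$ is a $\rmC^2$ tangent isometry at $x_0$, one checks that the pulled-back singular profile equals $2\kappa\sqrt{\rho/\pi}\sin(\theta/2)$ (with $\rho,\theta$ now as in the statement), modulo an $H^2 \cap H^{1,\infty}$ remainder, because the difference between the true geodesic coordinates and those induced by $\Phi$ is of order $\rho$, and multiplication of $\rho^{1/2}$ by smooth $O(\rho)$ correction keeps the remainder in $H^2$.

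Uniqueness of $\kappa$ follows because $\rho^{1/2}\sin(\theta/2) \notin H^2$ on any neighborhood of the tip: the second derivatives decay like $\rho^{-3/2}$, which is not square integrable in two dimensions. Hence if two constants $\kappa,\kappa'$ both satisfied \eqref{definition-kappa}, their difference would force $(\kappa-\kappa')\rho^{1/2}\sin(\theta/2)\in H^2$, so $\kappa = \kappa'$. The main obstacle in writing this out carefully is not the singular expansion itself (which is classical) but controlling the effect of the straightening map on the profile $\sqrt{\rho/\pi}\sin(\theta/2)$; since $\Phi$ is only $\rmC^2$ rather than smooth, one has to verify that the correction to this principal part that arises from the change of variables belongs to $H^2 \cap H^{1,\infty}$, which is precisely the right regularity class for a $\rmC^2$ perturbation of the geodesic coordinates.
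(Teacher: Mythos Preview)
Your proposal is correct and follows precisely the classical route that the paper's own proof defers to: the paper simply observes that $L^{1,2}(B\setminus\Gamma(\sigma))=H^1(B\setminus\Gamma(\sigma))$ (since the complement has Lipschitz boundary) and then cites Grisvard \cite[Theorem 4.4.3.7 and Section 5.2]{Gri85} and \cite[Appendix 1]{Mum-Sha} without further argument. Your sketch --- localize away from the tip by interior and boundary elliptic regularity, straighten the arc near the tip by a $\mathrm{C}^2$ diffeomorphism, invoke the Kondratiev--Grisvard expansion for the straight slit, and deduce uniqueness from $\rho^{1/2}\sin(\theta/2)\notin H^2$ --- is exactly what lies behind those citations, so you have in fact supplied more detail than the paper does.
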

\begin{proof}
Since the connected components of $B{\setminus}\serifGamma (\serifsigma)$ have Lipschitz boundary, the space $L^{1,2}(B{\setminus}\serifGamma (\serifsigma))$ coincides with $H^1(B{\setminus}\serifGamma (\serifsigma))$. Then the proof of \eqref{definition-kappa} can be found in \cite[Theorem 4.4.3.7 and Section 5.2]{Gri85} and \cite[Appendix 1]{Mum-Sha}.
\end{proof}
\par
The  constant   $\kappa$ is proportional to the  stress intensity factor  considered in the engineering literature. It is related to the derivative of the energy with respect to the crack length, as we shall see in Proposition~\ref{prop:derivative-Stab} below.
\par
Given an open subset  $A\subset \Omega$ with Lipschitz boundary, a compact set $\sfK \subset \overline \Omega$, and a
function $\sfg\colon  \partial A \setminus \sfK\to \R $,  we define
  \begin{equation}
  \label{e-tilde}
  \begin{aligned}
\widetilde{\calE}(A;g,K): =   \min_{v\in \wAdm {g}{K}{A}}  \int_{A{\setminus}K} \tfrac12  |\nabla v|^2 \dd x\,,
\end{aligned}
 \end{equation}
where 
\begin{equation}
\label{def-admis-u-new}
 \wAdm {\sfg}{\sfK}{A}: = \{ v \in L^{1,2}(A{\setminus}\sfK)\, :  \ v = \sfg \  \ \text{on }  \partial A \setminus \sfK\}\,.
\end{equation}
\EEE

The following result  can be obtained by adapting the proof of \cite[Thm.\ 6.4.1]{Gri}.
\begin{proposition}
\label{prop:derivative-Stab}
Let  $B$ and $\gamma$ be as in Proposition \ref{definition-kappa} and let  $\sfg: \partial B {\setminus}\{\serifgamma(\serifsigma_0)\} \to \R$ be a function. For every $\serifsigma\in (\serifsigma_0,\serifsigma_1)$ suppose that
$\wAdm {\sfg}{\serifGamma(\serifsigma)}{B}\neq\emptyset$ and let 
$
\sfu(\serifsigma) \in \argmin_{ v\in  \wAdm {\sfg}{\serifGamma(\serifsigma)}{B}} \int_{B{\setminus}\serifGamma(\serifsigma)} \tfrac12  |\nabla v |^2 \dd x \,.
$
Then,
\begin{equation}
\label{dsigma-Y}
\frac{\dd}{\dd\serifsigma}  \widetilde{\calE}(B;g,\Gamma(\sigma)) = -\kappa(\sfu(\serifsigma) , \serifsigma)^2 \qquad \text{for every } \serifsigma \in (\serifsigma_0,\serifsigma_1)\,,
\end{equation}
with $\kappa$ defined by \eqref{definition-kappa}.
\end{proposition}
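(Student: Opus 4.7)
The plan is to adapt the shape-derivative argument of Grisvard \cite[Thm.\ 6.4.1]{Gri}: fix a reference value $\serifsigma_* \in (\serifsigma_0, \serifsigma_1)$, pull the family $\widetilde{\calE}(B;\sfg,\serifGamma(\serifsigma_*+t))$ back to the fixed domain $B \setminus \serifGamma(\serifsigma_*)$ via a $t$-parametrized diffeomorphism, and differentiate at $t=0$ by the envelope theorem, reducing the derivative to a surface integral around the crack tip that can be evaluated using the singular expansion \eqref{definition-kappa}. Concretely, I would first construct a $C^1$-family of $C^2$-diffeomorphisms $\Phi_t \colon \overline{B} \to \overline{B}$, for $|t|$ small, with $\Phi_0 = \mathrm{id}$, such that (i) $\Phi_t$ equals the identity in a neighborhood of $\partial B$; (ii) $\Phi_t(\serifGamma(\serifsigma_*)) = \serifGamma(\serifsigma_*+t)$ and $\Phi_t(\serifgamma(\serifsigma_*)) = \serifgamma(\serifsigma_*+t)$; (iii) the velocity field $V := \partial_t \Phi_t|_{t=0}$ is tangent to $\serifGamma(\serifsigma_*)$ at every point of the crack and satisfies $V(\serifgamma(\serifsigma_*)) = \dot{\serifgamma}(\serifsigma_*)$. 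Such a $\Phi_t$ is produced via tubular coordinates around the $C^2$-arc $\serifgamma$ together with a cutoff localizing near the tip.

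With this $\Phi_t$ in hand, the change of variables $y = \Phi_t(x)$ rewrites
\[
\widetilde{\calE}(B;\sfg,\serifGamma(\serifsigma_*+t)) = \min_{v \in \wAdm{\sfg}{\serifGamma(\serifsigma_*)}{B}} \int_{B \setminus \serifGamma(\serifsigma_*)} \tfrac{1}{2} \bigl|(\nabla \Phi_t)^{-T} \nabla v\bigr|^2 \det(\nabla \Phi_t)\, dx,
\]
since $\Phi_t$ preserves the Dirichlet datum on $\partial B$. As the pulled-back coefficients depend smoothly on $t$ and the problem is strictly convex in $\nabla v$, the envelope theorem yields, writing $\sfu = \sfu(\serifsigma_*)$,
\[
\frac{d}{dt}\bigg|_{t=0} \widetilde{\calE}(B;\sfg,\serifGamma(\serifsigma_*+t)) = \int_{B \setminus \serifGamma(\serifsigma_*)} P_{ij}\, \partial_j V_i \, dx, \qquad P_{ij} := \tfrac{1}{2}|\nabla \sfu|^2 \delta_{ij} - \partial_i \sfu\, \partial_j \sfu,
\]
the Eshelby tensor of $\sfu$. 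The Euler--Lagrange equations of the minimization defining $\sfu$ give $\Delta \sfu = 0$ in $B \setminus \serifGamma(\serifsigma_*)$ with the natural homogeneous Neumann condition on the two faces of the crack, so $\partial_j P_{ij} = 0$ distributionally away from the tip. Integration by parts on $B \setminus (\overline{B_\varepsilon(\serifgamma(\serifsigma_*))} \cup \serifGamma(\serifsigma_*))$, combined with the tangency of $V$ to $\serifGamma(\serifsigma_*)$ and its vanishing near $\partial B$, eliminates every boundary contribution except the one along $\partial B_\varepsilon$. Substituting the decomposition \eqref{definition-kappa}, the cross and purely regular terms are $o(1)$ as $\varepsilon \downarrow 0$ (because $\nabla \sfu_R \in L^\infty$ while the singular gradient is $O(\rho^{-1/2})$), and a direct computation in polar coordinates centered at the tip, using $V(\serifgamma(\serifsigma_*)) = \dot{\serifgamma}(\serifsigma_*)$ and $\theta \in (-\pi,\pi)$, extracts the limit value $\kappa^2$ from the singular--singular contribution, which yields \eqref{dsigma-Y}.

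The main obstacle is the construction of the diffeomorphism family $\Phi_t$: one needs the velocity field $V$ to be simultaneously tangent to $\serifGamma(\serifsigma_*)$ along the entire crack (so the crack-face boundary terms drop out of the integration by parts), to realize the prescribed first-order displacement of the tip $\gamma(\serifsigma_*) \mapsto \gamma(\serifsigma_*+t)$ at first order in $t$, and to equal the identity near $\partial B$. The $C^2$-regularity assumed on $\serifgamma$ is exactly what is required for a tubular-coordinates construction producing $V \in C^1$, and hence for a rigorous envelope computation. Once $\Phi_t$ is available, the envelope formula and the asymptotic analysis at the tip follow well-established computations (cf.\ \cite{Gri}); separating the singular and regular contributions in the polar-coordinate limit requires some care in the cross terms but is otherwise routine.
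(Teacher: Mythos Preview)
Your proposal is correct and is exactly the approach the paper indicates: the paper does not spell out a proof but simply states that the result ``can be obtained by adapting the proof of \cite[Thm.\ 6.4.1]{Gri},'' and your sketch is a faithful elaboration of that shape-derivative/Eshelby-tensor argument. The construction of the diffeomorphism family $\Phi_t$ via tubular coordinates, the envelope differentiation, and the polar-coordinate evaluation at the tip are precisely the steps underlying Grisvard's computation, so there is nothing to compare or correct.
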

\paragraph{\bf Localization of the stability condition}
We  now prove that  the $\VE$-stability inequality  can be localized, in the spirit of 
 \cite[Lemma 8.5]{DMToa02}. 
 \begin{lemma}
\label{lemma:stab-local}
Assume that $(t,K)\in [0,T]\times \Xamb$ is $\cmdn$-stable and let
$
u \in  \argmin_{v\in \Adm{g(t)}{K}} \int_{\Omega{\setminus}K} \tfrac12 |\nabla  v|^2 \dd x \,.
$
Then, for every open subset $A\subset \Omega$ with Lipschitz boundary 
 we have 
\begin{equation}
\label{localized-stability}
\widetilde{\calE}(A;u,K) \leq  \widetilde{\calE}(A;u,K')  + \huno(K'{\setminus}K) + \int_{K'{\setminus}K} \!\!\!\!\! \!\!\!\!\! \dist{x}{K{\cap}A} \dd x 
 +(\lambda+\mu) \icost{K{\cap}\overline{A}}{(K'{\cup}K){\cap}\overline{A}}
\end{equation}
for all $K' \in \mathcal{K}(\overline{A})$ with $K' \supset K \cap \overline{A}$.
\end{lemma}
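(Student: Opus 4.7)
The plan is to reduce the localized inequality to the global $\cmdn$-stability of $(t,K)$ by choosing $K'' := K \cup K'$ as competitor. Since $K' \supset K \cap \overline A$ and $K' \subset \overline A$, one has $K' \cap K = K \cap \overline A$, so $K'' \setminus K = K' \setminus K$. Assuming $\huno(K' \setminus K) < \infty$ and $\icost{K\cap\overline A}{(K'\cup K)\cap\overline A} < \infty$ (otherwise the claim is trivial), Lemma~\ref{l:key} and the triangle inequality for $\alpha$ give $K'' \in \Xamb$, so the stability inequality $\ene tK \leq \ene t{K''} + \cmd K{K''}$ applies.

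To bound $\ene t{K''}$ from above, I would let $u' \in \wAdm{u}{K'}{A}$ minimize $\widetilde{\calE}(A;u,K')$ and define $w := u$ on $\Omega \setminus \overline A$ and $w := u'$ on $A \setminus K'$. Since $u' = u$ on $\partial A \setminus K' \supset \partial A \setminus K''$ in the trace sense, $w$ belongs to $L^{1,2}(\Omega\setminus K'')$; since $A$ is open with $A \subset \Omega$ one has $A \cap \partial\Omega = \emptyset$, hence $\partial_D\Omega \cap \overline A \subset \partial A$, and the Dirichlet condition $w = g(t)$ on $\partial_D\Omega \setminus K''$ is inherited from $u$ outside $\overline A$ and from the trace identity $u'=u$ on $\partial A \setminus K''$. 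Because $u$ is itself a minimizer for $\widetilde{\calE}(A;u,K)$ (being harmonic in $\Omega\setminus K$ with its own trace on $\partial A\setminus K$), a direct computation yields
\[
\ene t{K''} \leq \int_{\Omega\setminus K''} \tfrac12|\nabla w|^2 \dd x = \ene tK - \widetilde{\calE}(A;u,K) + \widetilde{\calE}(A;u,K'),
\]
which, combined with the stability inequality, gives $\widetilde{\calE}(A;u,K) - \widetilde{\calE}(A;u,K') \leq \cmd K{K''}$.

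It then remains to rewrite $\cmd K{K''}$ in localized form. From $K'' \setminus K = K' \setminus K$ one immediately gets $\huno(K''\setminus K) = \huno(K'\setminus K)$, and
\[
\ATW K{K''} = \int_{K'\setminus K} \dist xK \dd\huno(x) \leq \int_{K'\setminus K} \dist x{K\cap A}\dd\huno(x),
\]
since $K\cap A \subset K$. For the nucleation term, since $K' \subset \overline A$, any connected subset of $K'$ meets $K$ if and only if it meets $K \cap \overline A$, and a maximality argument shows that the connected components of $K \cup K'$ disjoint from $K$ coincide with the connected components of $K' \cup (K\cap\overline A) = (K'\cup K)\cap\overline A$ disjoint from $K \cap \overline A$; hence $\icost K{K''} = \icost{K\cap\overline A}{(K'\cup K)\cap\overline A}$. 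Substituting the three identities/inequalities into the previous displayed bound produces \eqref{localized-stability}.

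The most delicate point is this last identity for $\alpha$: it requires verifying that a connected component of $K'$ cannot ``slip out'' of being a connected component when one enlarges the ambient set from $K' \cup (K \cap \overline A)$ to $K' \cup K$ (or vice versa), which uses both the hypothesis $K' \subset \overline A$ and the inclusion $K' \supset K \cap \overline A$ in an essential way. Once this identity, the trace-matching of $w$ on $\partial A$, and the harmonicity of $u$ are in hand, the argument is essentially a bookkeeping calculation.
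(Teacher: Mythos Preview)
Your proof is correct and follows essentially the same route as the paper: both apply the global $\cmdn$-stability to the competitor $K\cup K'$, compare the global energies via a cut-and-paste of minimizers on $A$ (the paper simply cites \cite[Lemma~8.5]{DMToa02} for this step, which you spell out), and then localize each term of $\cmd{K}{K\cup K'}$. Two minor remarks: you are actually more careful than the paper in explicitly checking that $K'':=K\cup K'\in\Xamb$ before invoking stability; conversely, for the nucleation term the paper proves only the inequality $\icost{K}{K\cup K'}\le \icost{K\cap\overline A}{(K'\cup K)\cap\overline A}$ (by showing that every component of $K\cup K'$ disjoint from $K$ is a component of $(K'\cup K)\cap\overline A$), which is all that is needed for \eqref{localized-stability}. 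Your claimed \emph{equality} would require the converse inclusion as well, and your ``maximality argument'' does not obviously cover the case where $K'$ has infinitely many components accumulating on $D$; since only $\le$ is used, this does not affect the validity of your argument.
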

\begin{proof}
 Let $K' \in \mathcal{K}(\overline{A}) $. 
It follows from \eqref{Qstable}, with $K'$ replaced by $K'{\cup}K$, that
\begin{equation}
\label{stability-enlarged}
\ene t{ K} \leq  \ene t{ K'{\cup}K}   
+ \huno(K'{\setminus}K)  + \int_{ K'{\setminus}K} \dist{x}{ K } \dd x 
+(\lambda+\mu)\icost{ K}{K'{\cup} K} \,. 
\end{equation}
We repeat the very same calculations as in the proof of  \cite[Lemma 8.5]{DMToa02}, obtaining that 
\begin{equation}
\label{arte12}
\ene t{K'{\cup} K} -\ene t{ K}  \leq   \widetilde{\calE}(A; u,K')  -\widetilde{\calE}(A; u,K) \,. 
\end{equation}
As for the third term on the right-hand side of \eqref{stability-enlarged}, we observe that
\begin{equation}
\label{arte13}
\int_{K'{\setminus}K} \dist{x}{ K} \dd x   \leq \int_{K'{\setminus}K} \dist{x}{ K{\cap}A} \dd x .
\end{equation}
Finally, let us prove that  
\begin{equation}
\label{inequality alpha} 
\icost{ K}{K'{\cup} K}  \leq \icost{ K{\cap}\overline{A}}{(K'{\cup} K){\cap}\overline{A}} \,.
\end{equation}
It is enough to show that every connected 
component of $K'{\cup} K$ disjoint from  $ K$ is a connected 
component of $(K'{\cup} K){\cap}\overline{A}$. 
If $C$ is a connected component of $K'{\cup}K$ and does not intersect $K$,
then $C\subset K'\subset \overline{A}$, hence $C\subset (K'{\cup} K){\cap}\overline{A}$.
If $C'$ is a connected set such that $C\subset C'\subset (K'{\cup} K){\cap}\overline{A}$, 
then $C'\subset K'{\cup} K$ and hence $C'=C$. This shows that 
$C$ is a connected component of $(K'{\cup} K){\cap}\overline{A}$  and concludes the proof of
\eqref{inequality alpha}, which, together with \eqref{stability-enlarged}--\eqref{arte12}, yields \eqref{localized-stability}.
\end{proof}
\paragraph{\bf  Griffith's condition at the crack tips}
Our result for 
a $\VE$ solution $K\colon [0,T]\to \Xamb$ satisfying, in addition, the structural condition  stated in  Hypothesis \ref{h:griffith}, 
involves the constants the $\kappa_i= \kappa_i(u(t),\sigma_i(t))$  satisfying \eqref{definition-kappa} at the tips $\gamma_i(\sigma_i(t))$ of the branches of the  crack, where
$u(t)$ is  the corresponding minimal displacement (cf.\ \eqref{2.4bis} below).
\begin{theorem}
\label{th:griffith}
Let $K\colon [0,T]\to \Xamb$ be  a $\VE$ solution of the system for brittle fracture $\CRIS$, with time-dependent boundary datum $g\in \rmC^1([0,T];H^1(\Omega))$, and for every $t\in [0,T]$ let 
\begin{equation}
\label{2.4bis}
u(t) \in \argmin_{v \in \Adm{g(t)}{K(t)}} \int_{\Omega{\setminus}K(t)} \tfrac12 |\nabla  v|^2 \dd x \,.
\end{equation}
Assume that $K$
satisfies Hypothesis \ref{h:griffith} on some $(\tau_0,\tau_1)\subset [0,T]$, with arcs $\Gamma_i$ and
functions $\sigma_i$, $i=1,\ldots, p$.
Then,
\begin{subequations}
\label{Griff-crit}
\begin{align}
\label{Griff-crit--a}
&
\dot{\sigma}_i(t) \geq  0 && \foraa\, t \in (\tau_0,\tau_1),
\\
\label{Griff-crit--b}
& 1-\kappa_i (u(t),\sigma_i(t))^2 \geq 0 && \text{for all } 
 t \in (\tau_0,\tau_1),
\\
\label{Griff-crit--c}
& \left(1-\kappa_i (u(t),\sigma_i(t))^2\right)  \dot{\sigma}_i(t) =   0 && \foraa\, t \in (\tau_0,\tau_1)
\end{align}
\end{subequations}
for every $i=1,\ldots, p$.
\end{theorem}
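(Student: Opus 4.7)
The proof will closely follow the strategy of \cite[Thm.~8.4]{DMToa02} for quasistatic evolutions, with the additional task of controlling the new viscous-correction contributions $\delta$ (and the $\alpha$-penalization) in the localized stability inequality \eqref{localized-stability}. The key point is that, under Hypothesis \ref{h:griffith}, the natural competitors obtained by prolonging each branch $\Gamma_i$ by a small arc-length $h>0$ do not create new connected components, while the Almgren--Taylor--Wang--type contribution $\int \dist(x,K)\dd\mathcal{H}^1$ is quadratic in $h$; hence, after dividing by $h$ and letting $h\downarrow 0$, only the $\mathcal{H}^1$--length term and the derivative of $\widetilde{\calE}$ survive, and the argument reduces to the one for energetic/quasistatic evolutions.

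For \eqref{Griff-crit--a}, monotonicity of each $\sigma_i$ on $(\tau_0,\tau_1)$ is immediate from the irreversibility \eqref{irreversible} of any $\VE$ solution together with the structure \eqref{structure-condition}: if $\sigma_i(s)>\sigma_i(t)$ for some $s<t$, then $K(s)\not\subset K(t)$ since the arcs $\Gamma_i$ are disjoint and simple. For \eqref{Griff-crit--b}, I fix $t\in (\tau_0,\tau_1)$ and note that, by the continuity of $K$ on $(\tau_0,\tau_1)$, $t$ lies outside the jump set $\jump K$ and hence $K(t)$ is $\cmdn$-stable. For each $i$, choose an open ball $B_i\subset\Omega$ centered at the tip $\gamma_i(\sigma_i(t))$, small enough that $B_i\cap K(t)=\Gamma_i\cap B_i$ and that $\Gamma_i$ crosses $\partial B_i$ transversally. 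For $h>0$ small, set
\[
K'_h := (K(t)\cap\overline{B}_i)\cup\{\gamma_i(s):\sigma_i(t)\le s\le \sigma_i(t)+h\}\subset \overline{B}_i,
\]
which contains $K(t)\cap\overline{B}_i$ and adds no new connected component, so $\alpha(K(t)\cap\overline{B}_i,K'_h\cup K(t)\cap\overline{B}_i)=0$. Applying Lemma \ref{lemma:stab-local} with $A=B_i$ and $K'=K'_h$ yields
\[
\widetilde{\calE}(B_i;u(t),K(t))\le \widetilde{\calE}(B_i;u(t),K'_h)+h+\int_{K'_h\setminus K(t)}\dist(x,K(t)\cap B_i)\dd\mathcal{H}^1(x).
\]
The last integral is bounded by $h\cdot h = h^2$ since the added arc has length $h$ and lies within distance $h$ of $K(t)\cap B_i$. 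Dividing by $h$, letting $h\downarrow 0$, and invoking Proposition \ref{prop:derivative-Stab} with $g=u(t)$ on $\partial B_i\setminus\{\gamma_i(\sigma_i^0)\}$, we obtain $\kappa_i(u(t),\sigma_i(t))^2\le 1$.

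For \eqref{Griff-crit--c}, I use the energy-dissipation balance \eqref{enbal-VE}. Since $K$ is $\htop$-continuous on $(\tau_0,\tau_1)$, the jump set $\jump K\cap(\tau_0,\tau_1)$ is empty, so $\Jvarname{\vecostname}$ contributes nothing inside this interval. By \eqref{structure-condition} and the arc-length parametrization,
\[
\mathcal{H}^1(K(t)\setminus K(s))=\sum_{i=1}^p (\sigma_i(t)-\sigma_i(s))\qquad \tau_0<s<t<\tau_1,
\]
so $t\mapsto \mathcal{E}(t,K(t))$ is absolutely continuous on $(\tau_0,\tau_1)$ and
\[
\frac{\dd}{\dd t}\mathcal{E}(t,K(t))=\partial_t \mathcal{E}(t,K(t))-\sum_{i=1}^p \kappa_i(u(t),\sigma_i(t))^2\,\dot\sigma_i(t)\qquad \foraa\, t\in (\tau_0,\tau_1),
\]
where the partial derivative with respect to each $\sigma_i$ is computed via Proposition \ref{prop:derivative-Stab} applied on a small ball at the $i$-th tip (together with a cut-off argument to pass from $\widetilde{\calE}$ to $\calE$, as in \cite{DMToa02}). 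Differentiating \eqref{enbal-VE} in $t$ therefore gives
\[
\sum_{i=1}^p\bigl(1-\kappa_i(u(t),\sigma_i(t))^2\bigr)\dot\sigma_i(t)=0\qquad \foraa\, t\in (\tau_0,\tau_1).
\]
Combining this with \eqref{Griff-crit--a}--\eqref{Griff-crit--b} forces each summand to vanish, yielding \eqref{Griff-crit--c}.

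The main obstacle is the rigorous verification that the viscous correction $\delta$ contributes negligibly in the localized stability inequality. The formal $O(h^2)$ estimate sketched above is intuitively clear, but it requires one to argue carefully that the admissible competitor $K'_h$ can be chosen inside a ball where the global topology of $K(t)$ does not create unexpected connected-component count or distance blow-ups; once this is in place, the structure of the proof is exactly that of the energetic case in \cite{DMToa02}, with the simplification that on $(\tau_0,\tau_1)$ the jump term in \eqref{enbal-VE} identically vanishes.
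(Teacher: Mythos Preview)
Your proposal is correct and follows essentially the same approach as the paper: localize the $\cmdn$-stability via Lemma~\ref{lemma:stab-local} at each tip, use that the prolongation competitor along $\Gamma_i$ has $\alpha=0$ and an $o(h)$ Almgren--Taylor--Wang contribution to obtain \eqref{Griff-crit--b} from Proposition~\ref{prop:derivative-Stab}, and then exploit that on $(\tau_0,\tau_1)$ the jump term in \eqref{enbal-VE} vanishes so the balance reduces to the energetic one and the argument of \cite[Thm.~8.4]{DMToa02} yields \eqref{Griff-crit--c}. The only cosmetic differences are that the paper writes the competitor as $\Gamma_i(\sigma)\cap\overline{B}_i$ and records the vanishing of the distance integral as a limit rather than your explicit $O(h^2)$ bound, and that \eqref{Griff-crit--a} is already part of Hypothesis~\ref{h:griffith} (the $\sigma_i$ are assumed non-decreasing), so your derivation from irreversibility is correct but superfluous.
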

\begin{remark}
\upshape
\label{rmk:Griffith}
Following \cite{DMToa02}, we observe that \eqref{Griff-crit--a} states that the length of every branch of the crack is non-decreasing, in accordance with  the irreversibility of the crack growth process;  \eqref{Griff-crit--b} imposes that the absolute value of the stress intensity factor, at each tip, be less or equal than $1$; by 
\eqref{Griff-crit--c}, the stress intensity factor reaches the threshold  values $\pm1$ as soon as the tip moves with positive velocity. 
In fact,  conditions \eqref{Griff-crit} rephrase  Griffith's criterion in our  context.
\par
 Therefore, Theorem \ref{th:griffith} ensures that  a  $\VE$ solution complying with Hyp.\ \ref{h:griffith}   satisfies
Griffith's criterion \EEE in the interval $(\tau_0,\tau_1)$ during which it evolves \emph{continuously} as a function of time,  like it  happens  for  the  quasistatic evolutions considered in 
 \cite[Thm.\ 8.4]{DMToa02}.  This is consistent with the fact that  the most relevant difference between $\VE$ solutions and quasistatic evolutions resides in the  jump behavior, as highlighted by Proposition~\ref{prop:charact-VE}. 
\end{remark}
\begin{proof}[Proof of Theorem \ref{th:griffith}]
As in the proof of  \cite[Thm.\ 8.4]{DMToa02}, we fix an arbitrary $t\in (\tau_0,\tau_1)$ and consider a family of open balls $B_1,\ldots, B_p$ centered at the points $\gamma_i(\sigma_i(t))$. Up to choosing their radii sufficiently small, we have that $\overline{B}_i \subset \Omega$ and $\overline{B}_i \cap K(\tau_0) = \overline{B}_i \cap \overline{B}_j = \overline{B}_i \cap \Gamma_j= \emptyset$ for $j\neq i$. Furthermore, we may assume that, for every $i=1,\ldots, p$,  
\[
B_i \cap \Gamma_i = \{ \gamma_i(\sigma)\, : \rho_i^0 <\sigma<\rho_i^1\} 
\]
for suitable $\rho_i^0$ and $\rho_i^1$ such that $\sigma_i^0<\rho_i^0 <\sigma_i(t)<\rho_i^1 <\sigma_i^1$, and that the arcs $\Gamma_i$ intersect $\partial B_i$ only at the points $\gamma_i(\rho_i^0)$ and $\gamma_i(\rho_i^1)$ with a transversal intersection.  Then, taking into account Hypothesis \ref{h:griffith},  we conclude that 
\begin{equation}
\label{8.4-1}
\overline{B}_i \cap K(s) = \overline{B}_i \cap \Gamma_i(\sigma_i(s)) = \{ \gamma_i(\sigma)\, : \ \rho_i^0 \leq \sigma \leq \sigma_i(s)\}
\end{equation}
whenever $\sigma_i(s) \in (\rho_i^0,\rho_i^1)$. In particular, \eqref{8.4-1} holds at $s=t$ and for $s$ sufficiently close to $t$,  since  $\sigma_i$ is continuous at $t$. 
\par
It follows from Lemma \ref{lemma:stab-local} that, for every $i=1,\ldots, p$, 
\[
\widetilde{\calE}(B_i;u(t),K(t)) \leq   \widetilde{\calE}(B_i;u(t),K')  + \huno(K'{\setminus}K(t)) + \int_{ K'{\setminus}K(t)} \!\!\!\!\! \!\!\!\!\!\dist{x}{K(t){\cap}B_i} \dd x 
 + (\lambda+\mu)\icost{K(t){\cap}\overline{B}_i}{(K'{\cup}K(t)){\cap}\overline{B}_i}  
\]
\ for all $K' \in \mathcal{K}(\overline{B}_i)$ with $K' \supset K(t){\cap}\overline{B}_i$, where  $\widetilde{\calE}$ is the localized energy functional  defined in  \eqref{e-tilde}.
Choosing $K'= \Gamma_i(\sigma) \cap \overline{B}_i =  \{ \gamma_i(\rho)\, : \ \rho_i^0 \leq \rho \leq \sigma\}$ \ with 
$\sigma \in [\sigma_i(t), \rho_i^1]$,
and recalling that $\huno(\Gamma_i(\sigma){\setminus}\Gamma_i(\sigma_i(t)))=\sigma-\sigma_i(t)$  
and $\icost{\Gamma_i(\sigma_i(t)){\cap}\overline{B}_i}{ \Gamma_i(\sigma){\cap}\overline{B}_i}=0$,
we deduce that 
\begin{equation}
\label{towards-1-der}
\begin{aligned}
\widetilde{\calE}(B_i;u(t),\Gamma_i(\sigma_i(t)) &\leq   \widetilde{\calE}(B_i;u(t),\Gamma_i(\sigma))  + 
\sigma-\sigma_i(t)+ 
\int_{\Gamma_i(\sigma){\setminus}\Gamma_i(\sigma_i(t))} \!\!\!\!\! \!\!\!\!\! \!\!\!\!\! \!\!\!\!\! \!\!\!\!\! 
\dist{x}{\Gamma_i(\sigma_i(t)){\cap}B_i} \dd x. 
 \end{aligned}
\end{equation}
 for all $\sigma \in [\sigma_i(t), \rho_i^1]$. 
Taking into account that 
\[ 
\lim_{\sigma \to \sigma_i(t)} 
\frac1{\sigma{-}\sigma_i(t)}  \int_{\Gamma_i(\sigma){\setminus}\Gamma_i(\sigma_i(t))} \!\!\!\!\! \!\!\!\!\! \!\!\!\!\! \!\!\!\!\! \!\!\!\!\! 
\dist{x}{\Gamma_i(\sigma_i(t)){\cap}B_i} \dd x  = 0,
\]
 from \eqref{towards-1-der}  we  obtain  that 
\[
\frac{\dd}{\dd\sigma}  \widetilde{\calE}(B_i; u(t),\Gamma_i(\sigma)) \big |_{\sigma=\sigma_i(t)} +1 \geq 0 \qquad \text{for all } i =1, \ldots, p.
\]
Then, \eqref{Griff-crit--b} follows from Proposition \ref{prop:derivative-Stab} applied with $\sfg=u(t)$.
\par
For every $[s,t] \subset (\tau_0, \tau_1)$ we have that 
$\icost{K(s)}{K(t)} = 0$ by Hypothesis \ref{h:griffith}. Hence $ \Vari{\icostname} Kst =0$, so that $ \Vari{\mdn} Kst =\huno(K(t){\setminus}K(s))$. 
Since $K$ evolves continuously in time on the interval $(\tau_0,\tau_1)$, the energy-dissipation balance \eqref{enbal-VE}
 reduces to 
\begin{equation}
\label{reduced-enbal}
\ene t{K(t)} +\huno(K(t){\setminus}K(s)) = \ene s{K(s)}+\int_s^t \partial_t \ene r{K(r)} \dd r \quad \text{for all } [s,t]\subset (\tau_0,\tau_1)\,.
\end{equation}
From \eqref{reduced-enbal}, with the very same arguments as in the proof of \cite[Thm.\ 8.4]{DMToa02} we deduce  \eqref{Griff-crit--c}. 
\end{proof}

\section{Extension to $2$D linearized elasticity}
\label{s:7}
In \cite{Chambolle03} the existence of  \emph{quasistatic evolutions} for fracture,  proved in the scalar setting in \cite{DMToa02}, was extended to the vectorial, still two-dimensional, setting of linearized elasticity. The argument relied on a density result of $H^1(A; \R^2)$-fields in the space of fields whose symmetrized gradient is in $L^2(A;\R_{\mathrm{sym}}^{2\times2})$, proved by the author in the case $A \subset \R^2$ is a bounded open set whose complement has a finite number of connected components.
\par
We will now briefly explain how the arguments in  \cite{Chambolle03} also allow us to prove the existence of visco-energetic solutions for the  \emph{vectorial} ($2$D, linearized elasticity) version of the system for brittle fracture, that we  address 
in a domain $\Omega \subset \R^2$ still complying with the conditions expounded at the beginning of Section \ref{s:2}. 
 The viscously corrected system for brittle fracture is now given by the quadruple $\CRISEL$ in which
\begin{itemize}
\item[-] 
the dissipation quasi-distance $\mdn$, and the viscous correction $\delta$ are still given by 
\eqref{dissipation-quasidistance}, and \eqref{delta}, respectively;
\item[-] the driving energy functional $\calE_{\mathrm{LE}} [0,T] \times \Xamb \to [0,+\infty)$  is defined by 
\begin{equation}
\label{energy-el}
\calE_{\mathrm{LE}}(t,K) :=  
\min_{u\in \Admle {g(t)}K}  \int_{\Omega{\setminus}K} \tfrac12  \mathbb{C} e(u) : e(u) \dd x,
\end{equation}
where
$\mathbb{C}$ is   the elasticity tensor and $e(u)$ denotes the symmetric part of $\nabla u$, 
\[
g\in \mathrm{C}^1([0,T];H^1(\Omega;\R^2)),
\] and 
the space for admissible displacements   is   now given by 
\[
 \Admle {g(t)}K: = \{ v \in \mathrm{LD}(\Omega{\setminus}K)\, :  \ v = g(t) \  \text{ on } \partial_D \Omega \setminus K\}\,.
\]
Here, following \cite{Chambolle03}, 
for a given $A \subset \R^2$ we denote by $\mathrm{LD}(A)$ the space
\[
\mathrm{LD}(A)  := \{ v \in L^2_{\mathrm{loc}}(A;\R^2)\, : \ e(v) \in L^2(A;\R_{\mathrm{sym}}^{2\times2})\}\,.
\]
\end{itemize}
We will denote by $\calF_{\mathrm{LE}}$ the functional associated with $\calE_{\mathrm{LE}}$ and a reference \REVIS crack set  $K_o$ (which may be again chosen as the empty set),   \EEE
 as in  \eqref{perturbed-energy}. 
\par
As we have seen in Section \ref{s:proof}, in order to prove the existence of $\VE$ solutions it is sufficient to show that  the system for brittle fracture $\CRISEL$ complies with conditions 
$\mathbf{<A>}$,  $\mathbf{<B>}$, and $\mathbf{<C>}$ listed  at the beginning of Section \ref{s:5}. Now, the viscous correction $\delta$ obviously still enjoys property $\mathbf{<B>}$.
As for $\mathbf{<A>}$, it follows from the following analogue of Proposition \ref{l:ad-A}. 

\begin{proposition}
\label{l:ad-A-le} 
The functional $\calE_{\mathrm{LE}}\colon [0,T]\times \Xamb \to [0,+\infty)$   defined in \eqref{energy-el} 
is  continuous  w.r.t.\ the $\htop_\R$-topology  on sublevels of the functional $\calF_{\mathrm{LE}}$ 
Moreover, $\partial_t \calE_{\mathrm{LE}}\colon  [0,T] \times \Xamb \to \R$  is given by 
\[
\partial_t \calE_{\mathrm{LE}}(t,K)=  \int_{\Omega{\setminus}K} \mathbb{C}e(u): e( \dot{g}(t)) \dd x
\] 
\REVIS with $u \in \Admle {g(t)}K$ a solution of the minimum problem in \eqref{energy-el};
$\partial_t \calE_{\mathrm{LE}}$ as well 
 is \EEE  continuous w.r.t.\ the $\htop_\R$-topology   on sublevels of $\calF_{\mathrm{LE}}$, \EEE and   fulfills estimate \eqref{A}.
\end{proposition}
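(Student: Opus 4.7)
The plan is to mimic verbatim the structure of the proof of Proposition~\ref{l:ad-A}, with all occurrences of $\nabla u$ replaced by the symmetric gradient $e(u)$, the Deny--Lions space $L^{1,2}$ replaced by $\mathrm{LD}$, and Proposition~\ref{prop:continuity-minimizers} replaced by its vectorial analogue proved in \cite{Chambolle03}. The role of that analogue is pivotal: everything else is bookkeeping.

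First I would take a sequence $(t_n,K_n)_n\subset [0,T]\times \Xamb$ with $\sup_n \calF_{\mathrm{LE}}(t_n,K_n)<+\infty$ that converges in the $\htop_\R$-topology to some $(t,K)$. Exactly as in the scalar case, Lemma~\ref{l:key} combined with Theorem~\ref{Golab2} yields a common integer $m\geq 1$ such that $(K_n)_n,K\in \tKmf$ together with the uniform bound $\sup_n \calH^1(K_n)<+\infty$. The hypothesis $g\in\rmC^1([0,T];H^1(\Omega;\R^2))$ ensures strong convergence $g(t_n)\to g(t)$ in $H^1(\Omega;\R^2)$ and, analogously, $\dot g(t_n)\to \dot g(t)$.

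The heart of the argument is then the invocation of the vectorial version of Proposition~\ref{prop:continuity-minimizers}, available from \cite{Chambolle03} by means of the density theorem recalled at the beginning of Section~\ref{s:7}: for any choice of minimizers $u_n\in \Admle{g(t_n)}{K_n}$ and $u\in \Admle{g(t)}{K}$ of the respective elastic problems in \eqref{energy-el}, one has $e(u_n)\to e(u)$ strongly in $L^2(\Omega;\R^{2\times 2}_{\mathrm{sym}})$. Since $\mathbb{C}$ is a bounded positive definite symmetric tensor, this immediately gives
\[
\calE_{\mathrm{LE}}(t_n,K_n)=\int_{\Omega\setminus K_n}\tfrac12\mathbb{C}e(u_n):e(u_n)\dd x\;\longrightarrow\;\int_{\Omega\setminus K}\tfrac12\mathbb{C}e(u):e(u)\dd x=\calE_{\mathrm{LE}}(t,K),
\]
which is the claimed continuity of $\calE_{\mathrm{LE}}$.

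For $\partial_t \calE_{\mathrm{LE}}$ the formula is well posed because strict convexity of the quadratic form associated with $\mathbb{C}$ forces any two minimizers to share the same symmetric gradient on $\Omega\setminus K$. A standard first-order variational computation, based on expanding $g(t{+}h) = g(t)+h\dot g(t)+o(h)$ in $H^1(\Omega;\R^2)$ and comparing $\calE_{\mathrm{LE}}(t{+}h,K)$ with the admissible competitor $u(t)+h\dot g(t)$, yields the stated expression. Estimate~\eqref{A} follows from the Cauchy--Schwarz inequality together with the coercivity and boundedness of $\mathbb{C}$, producing the analogue of the constant~$C_P$ in~\eqref{constant-CP} in terms of $\|e(\dot g)\|_{L^\infty(0,T;L^2(\Omega;\R^{2\times 2}_{\mathrm{sym}}))}$. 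Finally, continuity of $\partial_t\calE_{\mathrm{LE}}$ on the sublevels of $\calF_{\mathrm{LE}}$ is a direct consequence of the strong convergences $e(u_n)\to e(u)$ and $e(\dot g(t_n))\to e(\dot g(t))$ in $L^2$.

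The one genuinely delicate point, which I would treat as a black box, is the vectorial continuity of the symmetric gradients of minimizers under Hausdorff convergence of the cracks. The scalar argument behind Proposition~\ref{prop:continuity-minimizers} does not transfer: without an analogue of the Deny--Lions/Sobolev extension on arc-connected components, one cannot directly pass to the limit in the Euler--Lagrange equation. This is precisely the obstruction resolved in \cite{Chambolle03} through the density of $H^1$-fields in the space $\mathrm{LD}$ on complements of compacta with finitely many connected components, so here the hard part has already been done upstream and the rest of the proof reduces to the scalar template.
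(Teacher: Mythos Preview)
Your proposal is correct and matches the paper's approach: the paper simply states that the proof follows from the arguments in \cite[Thm.~3]{Chambolle03}, and you have spelled out precisely how---by rerunning the template of Proposition~\ref{l:ad-A} with the vectorial continuity result from \cite{Chambolle03} substituted for Proposition~\ref{prop:continuity-minimizers}. Your identification of that substitution as the only nontrivial step is exactly the point.
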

The \emph{proof} of Proposition \ref{l:ad-A-le} follows from the arguments in \cite[Thm.\ 3]{Chambolle03}. 
Finally, Proposition \ref{l:ad-C}, guaranteeing the validity of  property $\mathbf{<C>}$, carries over to the present setting: in particular, 
the construction of the \REVIS mutual recovery sequence \EEE  $(K_n)_n$ fulfilling \eqref{rec-seq} developed throughout Section \ref{ss:5.1}
is still appropriate for this vectorial setting
 thanks to the aforementioned continuity properties of $\calE_{\mathrm{LE}}$.
\par
That is why the analogue of our existence Theorem \ref{thm:existVEcrack} holds for the system $\CRISEL$.
\section*{Acknowledgments} This paper is based on work supported by the National Research Projects (PRIN 2017) ``Variational Methods for Stationary and Evolution Problems with Singularities and Interfaces” (G.D.M. and R.T.) and  ``Gradient flows, Optimal Transport and Metric Measure Structures” (R.R. and G.S.).

G.S.~acknowledges the support of the Institute of Advanced Study of the Technical University of Munich.
G.S.~and R.R.~acknowledge also the support of the IMATI-CNR, Pavia. 

The authors are members of the Gruppo Nazionale  per l'Analisi Matematica, la Probabilit\`a e loro Applicazioni
(GNAMPA) of the Istituto Nazionale di Alta Matematica (INdAM).

\REVISDOUBT Finally, the authors are grateful to an anonymous referee for reading the paper very thoroughly and for several insightful suggestions. \EEE


\end{document}